\newtheorem{Thm}{Theorem}{\bfseries}{\itshape}
\newtheorem{Cor}{Corollary}{\bfseries}{\itshape}
\newtheorem{Prop}[Cor]{Proposition}{\bfseries}{\itshape}
\newtheorem{Lem}[Cor]{Lemma}{\bfseries}{\itshape}
{\bfseries}{\itshape}
\newtheorem{Claim}[Cor]{Claim}{\bfseries}{\itshape}
\newtheorem{Fact}[Cor]{Fact}{\bfseries}{\itshape}
\newtheorem{Def}[Cor]{Definition}{\bfseries}{\rmfamily}
\newtheorem{Ex}[Cor]{Example}{\scshape}{\rmfamily}
\newtheorem{Rem}[Cor]{Remark}{\scshape}{\rmfamily}
\renewcommand\ge{\geqslant} \renewcommand\le{\leqslant}
\let\tildeaccent=\~ \let\hataccent=\^
\renewcommand\~[1]{\widetilde{#1}} \renewcommand\^[1]{\widehat{#1}}
\def\<{\left<} \def\>{\right>} \def\({\left(} \def\){\right)}
\def\abs#1{\left\vert #1 \right\vert} 
\let\parasymbol=\S \def\secref#1{\parasymbol\ref{#1}}
 \def\pd#1#2{\frac{\partial#1}{\partial#2}}
\let\polishL=l \def\Zoladek.{\.Zol\c adek}
 \def\const{\operatorname{const}}
\def\codim{\operatorname{codim}}
 \def\dist{\operatorname{dist}}
 \def\ord{\operatorname{ord}}
 \def\etc.{\emph{etc}.}
\def\:{\colon} \def\R{{\mathbb R}} \def\C{{\mathbb C}}  \def\N{{\mathbb N}} \def\Q{{\mathbb Q}} 
 \def\e{\varepsilon} \def\S{\varSigma}
 \let\PolishL=\L 
\def\Lojas.{\PolishL ojasiewicz}
\def\cN{{\mathcal N}} \def\cM{{\mathcal M}}
\def\cP{{\mathcal P}}
\def\cT{{\mathcal{T}}}
\def\NI{{\mathcal{NI}}}
\def\NC{{\mathcal{NC}}}
\def\IL{{\mathcal{IL}}}
\def\cC{{\mathcal{cC}}}
 \def\mult{\operatorname{mult}}
\def\rest#1{{\vert_{#1}}}
\def\abs#1{\left\vert #1 \right\vert} \def\norm#1{\left\Vert #1
  \right\Vert}
   \def\CZ{C^Z}
\newcommand{\mo}[1][k]{M^{\smash{(#1)}}}
\newcommand{\bmo}{M}
\begin{document}

% +Title
\title{Multiplicities of Noetherian deformations} \author{Gal
  Binyamini}\address{University of Toronto} \author{Dmitry Novikov}
\address{Weizmann Institute of Science\\Rehovot\\Israel}

\begin{abstract}
The \emph{Noetherian class} is a wide class of functions 
defined in terms of polynomial partial differential 
equations. It includes functions appearing naturally in 
various branches of mathematics (exponential, elliptic, 
modular, etc.). A conjecture by Khovanskii states that the 
\emph{local} geometry of sets defined using Noetherian 
equations admits effective estimates analogous to the 
effective \emph{global} bounds of algebraic geometry.  

We make a major step in the development of the theory of 
Noetherian functions by 
providing an effective upper bound for the local number of 
isolated solutions of a Noetherian system of equations 
depending on a parameter $\e$, which remains valid even when 
the system degenerates at $\e=0$. An estimate of this sort 
has played the key role in the development of the theory of 
Pfaffian functions, and is expected to lead to similar 
results in the Noetherian setting. We illustrate this by 
deducing from our main result an effective form of the 
\Lojas. inequality for Noetherian functions.
\end{abstract}
%% -Title
\maketitle
\date{\today}

\section{Introduction}
\label{sec:intro}

One of the cornerstones of algebraic geometry is the Bezout theorem: a
system of polynomial equations in a complex projective space always
admits a specified number of solutions depending on their degrees.
This statement has profound implications for the algebraic category:
essentially every geometric and topological property of an
algebraic variety can be estimated in terms of the degrees of the
equations defining it.

Moving beyond the algebraic category, Khovanskii has defined the class
of real \emph{Pfaffian functions}. In his theory of Fewnomials 
\cite{Khovanskii:Fewnomials},
Khovanskii has shown that the number of solutions of a system of real
Pfaffian equations admits an effective upper bound in terms of the
degrees of the equations. This fundamental result has been the basis
of many subsequent works, showing that the geometry of real Pfaffian
sets is tame and admits effective estimates in terms of degrees 
\cite{W:expmod,S:pfaffclos,GV:PfaffianComplexity}.

The real Pfaffian class consists of functions satisfying a system of
differential equations with a certain triangularity condition. It is
surprisingly general, and includes many important transcendental
functions --- most notably the real exponential. On the other hand,
not all differential systems appearing naturally in mathematics are
real Pfaffian. We mention a few key examples:
\begin{itemize}
\item Exponential maps of (complex) commutative algebraic groups, for
  instance complex exponentials and elliptic functions. Solutions of
  equations involving such functions have been studied in the context
  of diophantine approximation, \cite{MW:zero1,MW:zero2,W:multgrps}.
\item Abelian integrals and iterated Abelian integrals. Solutions of
  equations involving such functions have been studied in relations to
  perturbations of Hamiltonian systems and their limit cycles 
  \cite{BN:iterated,BNY:inf16}.
\item Functions of modular type, for instance Klein's modular invariant
  $j$ and Ramanujan's functions $P,Q,R$. Solutions of equations
  involving such functions have been studied in transcendental number
  theory, \cite{N:PQR,Philibert:MahlerManin}.
\item Hamiltonian flow maps in completely integrable systems.  
\end{itemize}

At the time of the development of the theory of Fewnomials, Khovanskii
also considered the more general notion of \emph{Noetherian
  functions}, excluding the triangularity condition from the
definition of the Pfaffian functions. One may loosely say that a
collection of functions is Noetherian if each of their derivatives can
be expressed algebraically in terms of the functions themselves
(see~\secref{ssec:noeth} for two equivalent precise definitions). All
functions listed above, while not real-Pfaffian, do form Noetherian
systems.

Noetherian functions do not satisfy global estimates similar to those
obtained in the theory of Fewnomials. However, Khovanskii has
conjectured that a local analog of these estimates continues to hold
in the class of Noetherian functions, which would form a type of local
analog of the Pfaffian class. As in the Pfaffian case, the key step is
to establish an upper bound for the number of solutions of a system of
equations involving Noetherian functions in terms of degrees --- this
time in a suitably defined local sense.

In this paper we make a main step in the development of the theory of
Noetherian functions by establishing an upper bound for the local
number of solutions of a system of Noetherian equations depending on a
Noetherian parameter $\e$. The novelty of our result is that the
estimate is valid even if the system degenerates and has non-isolated
solutions when $\e=0$. A result of this type has been established by
Gabrielov in the (complex) Pfaffian case, and subsequently used to
derive estimates for many topological and geometric properties in the
Pfaffian category (see \cite{GV:PfaffianComplexity} for a survey). As an
illustrative example, we use our estimate to establish an effective
\Lojas. inequality in the Noetherian category following Gabrielov. We
expect many other results of \cite{GV:PfaffianComplexity} to follow
similarly.

\subsection{Noetherian functions}\label{ssec:noeth}
We begin by defining our principal object of investigation, namely the
rings of Noetherian functions.
\begin{Def}[\protect{Noetherian functions, algebraic \cite{Tougeron:Noetherian}}]
  A ring $S$ of analytic functions in a domain $U\subset\C^n$ is
  called a \emph{ring of Noetherian functions} if it is generated over
  the polynomial ring $\C_n$ by functions $\phi_1,\ldots,\phi_m\in S$
  and closed under differentiation with respect to each variable
  $x_i$.

  Any element of $\psi\in S$ is called a \emph{Noetherian
    function}. We will say that it has degree $d$ with respect to
  $\phi_1,\ldots,\phi_m$ if $d$ is the minimal degree of a polynomial
  $P\in\C_{n+m}$ such that
  $\psi=P(x_1,\ldots,x_n,\phi_1,\ldots,\phi_m)$. When there is no risk
  of confusion we will simply call this the degree of $\psi$.
\end{Def}

Alternatively, equip $\C^{n+m}$ with the affine coordinates
$(x_1,\ldots,x_n,f_1,\ldots,f_m)$ and denote by $\C_{n+m}$ the ring of polynomials
in these variables. Consider a distribution $\Xi\subset T\C^{n+m}$
defined by
\begin{equation} \label{eq:noetherian-system}
  \Xi=\<V_1,\ldots,V_n\>, \quad V_i = \pd{}{x_i}+\sum_{j=1}^m g_{ij}\pd{}{f_j} \quad \text{for } i=1,\ldots,n.
\end{equation}
where $g_{ij}\in\C_{n+m}$. We call $\delta=\max_{ij}\deg g_{ij}$ the
\emph{degree of the chain}. For any point
where~\eqref{eq:noetherian-system} is integrable, we will denote by
$\Lambda_p$ the germ of an integral manifold through $p$.

\begin{Def}[\protect{Noetherian functions, geometric \cite{GK:MultNoetherian}}] \label{def:NoethFgeom}
  A tuple $\phi_1,\ldots,\phi_m$ of analytic functions on a domain
  $U\subset\C^n$ is called a \emph{Noetherian chain} if its graph
  forms an integral manifold of the distribution $\Xi$ (for some
  choice of the coefficients $g_{ij}\in\C_{n+m}$). In other words,
  if the following system of differential equations is satisfied
  \begin{equation}
    \pd{\phi_j}{x_i} = g_{ij}(x_1,\ldots,x_n,\phi_1,\ldots,\phi_m).
  \end{equation}

  Any function $\psi=P(x_1,\ldots,x_n,\phi_1,\ldots,\phi_m)$ where
  $P\in\C_{n+m}$ is called a Noetherian function. We will say that it
  has degree $d$ with respect to $\phi_1,\ldots,\phi_m$ if $d$ is the
  minimal degree of such a polynomial $P\in\C_{n+m}$. When there is no
  risk of confusion we will simply call this the degree of $\psi$.
\end{Def}
It is straightforward to check that the generators of a ring of
Noetherian functions forms a Noetherian chain and vice versa.

\begin{Rem} \label{rem:integral-manifold} Note that by the above, if
  $\Lambda$ is an integral manifold of~\eqref{eq:noetherian-system}
  then the projection onto the $x$-variables gives a system of
  coordinates on $\Lambda$, and with respect to these coordinates the
  Noetherian functions are simply the restrictions of polynomials
  $P\in\C_{n+m}$ to $\Lambda$. This viewpoint is particularly
  convenient for our purposes and will be used frequently.
\end{Rem}

We define a \emph{Noetherian set} to be the common zero locus
of any collection of Noetherian functions with a common domain
of definition $U$.

As solutions of non-linear differential equations, Noetherian
functions do not admit good global behavior --- in fact, their domains
of definition may include movable singularities and natural
boundaries. It is natural therefore to begin the study of these
functions with their local properties: what can be said about germs of
Noetherian functions and sets in terms of the
system~\eqref{eq:noetherian-system} that defines them?

In order to be more concrete, consider any
system~\eqref{eq:noetherian-system} with the parameters $m,n,\delta$
and an $n$-tuple $\psi_1,\ldots,\psi_n$ of Noetherian functions of degrees
bounded by $d$. Let $\cN(m,n,\delta,d;0)$ denote the maximal possible
multiplicity of a common zero of the equations $\psi_1=\cdots=\psi_n=0$,
assuming that the zero is isolated.  More generally, consider any
family $\psi^\e_1,\ldots,\psi^\e_n$ of Noetherian functions of degrees
bounded by $d$ (for each fixed $\e$) and depending analytically on
$\e$. Let $\cN(m,n,\delta,d)$ denote the maximal possible number of
isolated zeros (counted with multiplicities) of the equations
$\psi^\e_1=\cdots=\psi^\e_n=0$ which converge to a given point point as
$\e\to0$. Crucially, here it is not assumed that the given point is an
isolated solution of the limiting system at $\e=0$.

It is a general principle that estimates for $\cN(m,n,\delta,d)$ imply
estimates for the local topological and analytic structure of
functions and sets, by a combination of topological arguments (e.g.
Morse theory) and analytic-geometric arguments (e.g. the study of
polar curves). The more restrictive $\cN(m,n,\delta,d;0)$ does not
control the topological and analytic-geometric structure to the same
extent, but it is a convenient first approximation which is often
easier to study and still provides useful information. Instances of
the problem of estimating $\cN(m,n,\delta,d;0)$ and
$\cN(m,n,\delta,d)$ have been studied by authors in various areas of
mathematics. Below we present an outline of some of the main
contributions.

\subsection{Historical sketch}

The case $n=1$ is special and has attracted considerable attention.
In this case the problem reduces to the study of polynomial functions
on the trajectory of a (non-singular) polynomial vector field at a
point. Moreover, by an argument due to Khovanskii
\cite{Gab:MultVectorFieldNew}, in this case the quantities
$\cN(m,n,\delta,d;0)$ and $\cN(m,n,\delta,d)$ coincide. Therefore the
problem is to estimate the multiplicity of the restriction of a
polynomial to the trajectory of a non-singular vector field at a
point.

Brownawell and Masser \cite{BM:MultEstI,BM:MultEstII} and Nesterenko
\cite{Nesterenko:MultEstimate} have studied the problem of estimating
$\cN(m,1,\delta,d)$ with motivations in transcendental number theory.
These authors have also considered the case where one takes into
account the multiplicity at several (rather than just one) points.
Nesterenko also considered singular systems of differential equations
in \cite{N:PQR} and established an estimate which is sharp with
respect to $d$ up to a multiplicative constant and doubly-exponential
with respect to $m$. This estimate was improved to single-exponential
in $m$ in \cite{B:mult-sing}.

Risler \cite{Risler:Nonholonomy,GR:MultVectorFields} has considered
the problem in the context of control theory and specifically
non-holonomic systems. Gabrielov \cite{Gab:MultVectorFieldNew} refined
this work and found a surprising reformulation in terms of the Milnor
fibers of certain deformations, leading to an estimate for
$\cN(m,1,\delta,d)$ which is simply-exponential in $m$ and polynomial
in $d$ and $\delta$. Gabrielov's approach was refined in
\cite{B:mult-morse} to give a sharp asymptotic with respect to $d$.

Novikov and Yakovenko \cite{NY:Meandering} have considered the problem
with motivations in the study of abelian integrals, and have produced
a bound which is valid not only for local multiplicity but for the
number of zeros (counted with multiplicities) in a small ball of a
specified radius. Yomdin \cite{Yomdin:Oscillation} has obtained a
similar result motivated by the study of cyclicities in dynamical
systems. For both of these works, the principal difficulty is to
obtain an estimate in an interval whose radius does not degenerate
along deformations where the polynomial becomes identically vanishing
on the trajectory.

The case $n>1$ is considerably more involved, because germs of
functions and sets in several variables can degenerate in much more
complicated ways. Khovanskii \cite{Khovanskii:Fewnomials} has studied
systems~\eqref{eq:noetherian-system} satisfying an additional
triangularity condition and defined over the reals, leading to the
influential theory of Pfaffian functions. Khovanskii has shown that
the global geometry of Pfaffian sets (e.g. number of connected
components, sum of Betti numbers) can be estimated in terms of the
degrees of the coefficients of the
system~\eqref{eq:noetherian-system}.

Gabrielov \cite{Gab:MultPfaffian} has established a local complex
analog of Khovanskii's estimates for the number of solutions of a
system of Pfaffian equations, namely an upper bound for
$\cN(m,n,\delta,d)$ (for the Pfaffian case). Gabrielov used this
result to produce an effective form of the \Lojas. inequality for
Pfaffian functions, and later in a series of joint works with Vorobjov
(for example see
\cite{GV:PfaffianComplexity,GV:CylindricalDecomp,GV:PfaffianStrats})
established many results on the complexity of various geometric
constructions within the Pfaffian class (e.g. stratification, cellular
decomposition, closure and frontier).

In the early 1980s Khovanskii conjectured that the quantity
$\cN(m,n,\delta,d)$ admits an effective upper bound (see
\cite{GK:MultNoetherian} for the history and another equivalent
form of this conjecture). The conjecture in this generality has
remained unsolved.

Building upon the one-dimensional approach of Gabrielov
\cite{Gab:MultVectorFieldNew}, Gabrielov and Khovanskii
\cite{GK:MultNoetherian} have established an estimate for
$\cN(m,n,\delta,d;0)$ using a topological deformation technique. In
our previous work \cite{BN:MultOp} we have established a weaker
estimate for this quantity using algebraic techniques, and extended
these local estimates to an estimate on the number of zeros (counted
with multiplicities) in a sufficiently small ball, provided that the
ball is not too close to a non-isolated solution of the
system. However, neither of these approaches give an estimate for the
more delicate quantity $\cN(m,n,\delta,d)$.

In \cite{BN:NoetherianDim2} we began the investigation of non-isolated
solutions of systems of Noetherian equations for $n=2$. We have
obtained an explicit upper bound for an appropriately defined
``non-isolated intersection multiplicity''. However, this bound is not
sufficient if one is interested in estimating the number of solutions
born from a deformation of a non-isolated solution.

\subsection{Statement of our result}

Consider a system~\eqref{eq:noetherian-system} and the corresponding
ring of Noetherian functions $S$ defined in a domain $U\subset\C^n$,
and let $p\in U$.

Let $\rho\in S$ and let $X\subset U$ be a germ of a Noetherian set
\begin{equation}
  X = \{x\in U: \psi_1(x)=\cdots=\psi_{n-1}(x)=0 \}
\end{equation}
at the point $p$, where $\psi_i\in S$ for $i=1,\ldots,k$.

\begin{Def}
  The \emph{deformation multiplicity}, or \emph{deflicity} of $X$
  with respect to $\rho$ is the number of isolated points in
  $\rho^{-1}(y)\cap X$ (counted with multiplicities) which converge to $p$
  as $y\to\rho(p)$.
\end{Def}

We let $\cM(m,n,\delta,d)$ denote the maximum possible deformation
multiplicity for any system~\eqref{eq:noetherian-system} with parameters
$m,n,\delta$ and any Noetherian set defined as above with $\deg \psi_i\le d$
and $\deg\rho\le d$.

\begin{Rem}
  This notion is closely related to the quantity
  $\cN(m,n,\delta,d)$ defined in section~\ref{ssec:noeth}. Indeed, let a family
  $\psi^\e_1,\ldots,\psi^\e_n$ be given where $\psi^\e_i\in S$ and
  $\deg \psi^\e_i\le d$. Let us assume further that the dependence on
  $\e$ is Noetherian, i.e. that these functions may be viewed as
  Noetherian functions in the variables $x_1,\ldots,x_n,\e$ and have
  degrees bounded by $d$ in this sense as well. Then the number of
  isolated zeros of $\psi^\e_1=\cdots=\psi^\e_n=0$ which converge to $p$ as
  $\e\to0$ coincides with the deflicity of the set
  \begin{equation}
    X = \{(x,\e)\in U : \psi_1(x,\e)=\cdots=\psi_n(x,\e)=0\}
  \end{equation}
  with respect to $\rho\equiv\e$.
\end{Rem}

We can now state our main result.

\begin{Thm}\label{thm:main}
  The quantity $\cM(m,n,\delta,d)$ admits an effective upper bound,
  \begin{equation}
    \cM(m,n,\delta,d) \le 
    \left(\max\{d,\delta\}(m+n)\right)^{16(m+n)^{20n+3}}=
    \left(\max\{d,\delta\}(m+n)\right)^{(m+n)^{O(n)}}
  \end{equation}
\end{Thm}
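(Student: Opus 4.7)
The strategy is to reduce the $n$-dimensional deflicity problem to a one-dimensional multiplicity problem on a Noetherian curve, where strong bounds on $\cN(m,1,\delta,d)$ are available from the work of Gabrielov--Khovanskii and others. First, I would lift the question to the integral manifold $\Lambda \subset \C^{n+m}$ of $\Xi$ through a preimage $\hat p$ of $p$; by Remark~\ref{rem:integral-manifold}, Noetherian functions of degree $\le d$ become restrictions to $\Lambda$ of polynomials of degree $\le d$, and the deflicity equals the number of isolated points of $\Lambda \cap \{\psi_1=\cdots=\psi_{n-1}=0,\ \rho=y\}$ converging to $\hat p$ as $y\to\rho(p)$.

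Next, I would perturb the $\psi_i$ by small polynomials of the same degree so that $\tilde X = \{\tilde\psi_1=\cdots=\tilde\psi_{n-1}=0\}$ becomes pure of dimension one on $\Lambda$ near $\hat p$. By semicontinuity of the isolated part of the intersection, the deflicity is bounded by the number of zeros of $\tilde\rho - y$ on $\tilde X$ near $\hat p$ for generic small $y$. This reduces the problem to counting multiplicities of a Noetherian function on a Noetherian curve, except that the curve is now defined implicitly by $n-1$ equations rather than as a trajectory of a single polynomial vector field, so the one-dimensional estimates cannot be invoked directly.

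To convert $\tilde X$ into a form amenable to the one-dimensional techniques, introduce the polar curve $\Pi \subset \Lambda$ of the map $(\tilde\psi_1,\ldots,\tilde\psi_{n-1},\tilde\rho)\colon \Lambda \to \C^n$, namely the locus on which the differentials of these functions along $\Lambda$ are linearly dependent. This is a Noetherian set of dimension at most $1$, cut out by $n \times n$ Jacobian minors whose degrees are larger by a factor of at most $(m+n)\max\{d,\delta\}$. Critical values of $\tilde\rho|_{\tilde X}$ occur along branches of $\Pi$, so multiplicities of $\tilde\rho$ on branches of $\tilde X$ are controlled by polynomial multiplicities on branches of $\Pi$. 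Iterating this polar construction $O(n)$ times brings one to a genuine trajectory of a polynomial vector field on $\Lambda$, to which the known one-dimensional bound $\cN(m,1,\delta,d) \le \poly(d,\delta)^{O(m)}$ applies; tracking how degrees blow up through the $O(n)$ reductions then yields the stated estimate.

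The hard part is the polar-curve reduction. One must guarantee that each polar step genuinely lowers the effective dimension while producing only \emph{Noetherian} auxiliary equations of controlled degree; one must correctly account for non-isolated components of $\tilde X$ that can absorb branches of $\Pi$ and consequently affect which isolated zeros of $\tilde\rho - y$ persist; and one must propagate the deflicity bound through the reductions even as the effective number of generators $m$ and the ambient dimension $n$ change at each step. Unlike the Pfaffian case, in which triangularity of the chain keeps the Jacobians manageable, in the Noetherian setting every Jacobian minor involves all $m$ chain functions, and it is this cost, compounded over $O(n)$ iterations, that produces the iterated-exponential dependence $(m+n)^{O(n)}$ in the final exponent.
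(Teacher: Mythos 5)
Your proposal diverges from the paper's approach after the common first step (lifting to the integral manifold), and unfortunately the divergence introduces a gap that swallows the central difficulty the theorem is designed to address.

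The critical issue is your appeal to ``semicontinuity of the isolated part of the intersection'' after perturbing the $\psi_i$ by small polynomials. This is exactly the step that fails. The deflicity counts isolated solutions of $\rho=y$ on $X$ that converge to $p$ as $y\to\rho(p)$; for each fixed small $y$ these solutions sit in balls whose radii shrink to zero as $y\to 0$. A perturbation of fixed size (however small) will eventually dominate the original equations near those shrinking solutions and can destroy them. Thus deflicity is \emph{not} semicontinuous under generic small perturbations: a perturbation is deflicity-preserving only if its order of vanishing along each good curve $\gamma_i$ of $X$ exceeds a threshold determined by $\ord_{\gamma_i}M$ and $\ord_{\gamma_i}(R-R(p))$. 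Constructing such a perturbation is precisely the content of Propositions~\ref{prop:deflicity-preserving}, \ref{prop:MRvsDist} and~\ref{prop:E-construction}, which build an auxiliary Noetherian function $E$ of controlled degree that minorizes the critical distance $\dist(\cdot,\NI(P;R))$ on good curves, and then perturb by $Q_jE^{k+1}$. Your proposal elides this entirely.

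The second gap is your polar-curve reduction. A polar curve of the map $(\tilde\psi_1,\ldots,\tilde\psi_{n-1},\tilde\rho)$ is cut out on $\Lambda$ by Jacobian minors, so it is a one-dimensional Noetherian set defined by $n-1$ equations --- it is not a trajectory of a single polynomial vector field, and no number of iterations of the polar construction will make it one. The available one-dimensional bounds (Gabrielov, Nesterenko, etc.) are specifically for the multiplicity of a polynomial restricted to a trajectory, and do not apply to general one-dimensional Noetherian sets. The paper instead runs an induction on $\dim\NI(P;R)$ (Theorem~\ref{thm:NI-induction}), strictly lowering that dimension at each step via the deflicity-preserving perturbation together with the Sard-type Proposition~\ref{prop:generic-def}, and bottoming out at $e=0$ where the isolated multiplicity bound of Gabrielov--Khovanskii (Theorem~\ref{thm:gk-mult}) applies. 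So the iterated-exponential shape $(m+n)^{O(n)}$ in the final exponent comes from the degree blowup across at most $n$ applications of the inductive step, not from iterating polar curves.
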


Recall that according to Khovanskii's conjecture, the quantity
$\cN(m,n,\delta,d)$ admits an effective upper bound. Our result
applies only under the additional condition that the dependence of the
deformation on $\e$ is Noetherian (the reader may consult
Remark~\ref{rem:why-not-gk} for a discussion of the point in our
argument where this extra condition is needed). However, in most
applications one needs to deal only with explicitly presented
deformations which do depend in a Noetherian manner on $\e$. One can
therefore expect that Theorem~\ref{thm:main} will suffice for the
investigation of many local topological and analytic-geometric
properties of Noetherian sets and
functions.

\begin{Rem}
  One can also consider systems~\eqref{eq:noetherian-system} involving
  rational (rather than polynomial) coefficients, as well as rational $P,R$.
  As long as one considers points $p$ away from the polar locus of these
  functions, Theorem~\ref{thm:main} remains valid and can be proved in the
  same way. On the other hand, when $p$ belongs to the singular locus of the
  system, the situation is considerably more involved.
\end{Rem}

\subsection{Application:  \Lojas. inequality}

As an example of an application of Theorem~\ref{thm:main}, we prove an
effective version of {\L}ojasiewicz inequalities for Noetherian
functions. Let $\Xi_{\R}\subset T\R^{n+m}$ be a real distribution
spanned by real vector fields $V_i$ as in
\eqref{eq:noetherian-system}, with $g_{ij}\in\R_{n+m}$ of degree
$\le\delta$, where $\R_{n+m}=\R[x_1,...,x_n,f_1,...,f_m]$, and let
$\Lambda_p\subset\R^{n+m}$ be a germ of its integral manifold through
$p$. The restrictions of real polynomials of degree $d$ to $\Lambda_p$
are called real Noetherian functions of degree $d$.

\begin{Thm}\label{thm:Lojasiewicz}
  Let $f,g$ be real Noetherian functions of $n$ variables of degree
  $d$ defined by the same Noetherian chain, $f(p)=0$, and assume that
  $\{f=0\}\subset\{g=0\}$ near $p$. Then there exists a constant
  $0<k\le \left(\max\{d,\delta\}(m+n)\right)^{(m+n)^{O(n)}}$ such that
  $|f|>|g|^k$ near $p$.
\end{Thm}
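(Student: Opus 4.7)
My plan is to adapt Gabrielov's proof of the effective \Lojas. inequality for Pfaffian functions \cite{Gab:MultPfaffian} by substituting Theorem~\ref{thm:main} for his multiplicity estimate. First I would complexify: the real distribution $\Xi_{\R}$ extends to a complex distribution $\Xi\subset T\C^{n+m}$ with coefficients of the same degrees, and $f,g$ extend to complex Noetherian functions on the complexified integral manifold through $p$ with the same degrees, so Theorem~\ref{thm:main} becomes directly applicable.

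Next I would translate the inequality into a statement about orders along analytic arcs. Since $f(p)=0$ and $\{f=0\}\subset\{g=0\}$, we have $g(p)=0$. By the curve-selection lemma applied in the subanalytic category, the smallest admissible $k$ equals
\[
\Lambda := \sup_\gamma \frac{\ord_0(f\circ\gamma)}{\ord_0(g\circ\gamma)},
\]
the supremum being over real-analytic arcs $\gamma\:(\R,0)\to(U,p)$ with $g\circ\gamma\not\equiv 0$. Because $g(p)=0$ forces $\ord_0(g\circ\gamma)\ge 1$, it suffices to bound $\ord_0(f\circ\gamma)$ from above over a family of extremal arcs.

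I would then introduce the polar curve
\[
\Gamma = \{x\in\Lambda_p : \operatorname{rank}(df(x),dg(x))<2\},
\]
defined by the vanishing of the $2\times 2$ minors of the Jacobian of $(f,g)$ with respect to $x_1,\dots,x_n$. Since the derivations $V_i$ of~\eqref{eq:noetherian-system} raise the Noetherian degree by at most $\delta-1$, each such minor is Noetherian of degree at most $2(d+\delta-1)$. A generic linear combination of these minors yields $n-1$ Noetherian equations of the same degree whose common zero set $\Gamma'$ is a one-dimensional Noetherian curve containing the branches of $\Gamma$ through $p$. By the classical characterization of the \Lojas. exponent via polar varieties --- Teissier--L\^e in the complex setting and Bochnak--Risler in the real one, used in the Pfaffian case in \cite{Gab:MultPfaffian} --- the supremum $\Lambda$ is attained along a complex-analytic branch of $\Gamma'$ not lying in $\{g=0\}$. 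For any such branch $\gamma$, the order $\ord_0(f\circ\gamma)$ equals the number of isolated points of $\gamma\cap\{f=\e\}$ converging to $p$ as $\e\to 0$; summing over branches gives at most the deflicity of $(\Gamma',f)$, which Theorem~\ref{thm:main} bounds by $\cM(m,n,\delta,2(d+\delta-1))\le(\max\{d,\delta\}(m+n))^{(m+n)^{O(n)}}$. This is the desired bound on $k$.

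The conceptually non-routine step is the reduction of $\Lambda$ to branches of $\Gamma'$: one must verify that no exotic arc outside the polar locus realizes a larger ratio $\ord_0(f\circ\gamma)/\ord_0(g\circ\gamma)$. This is a classical fact of real/complex singularity theory, independent of the Noetherian structure, which I would either invoke as a black box from the literature (as in \cite{Gab:MultPfaffian}) or verify directly via a genericity argument combined with curve-selection. Everything else --- checking that the polar locus is Noetherian of controlled degree and applying Theorem~\ref{thm:main} --- is a mechanical computation.
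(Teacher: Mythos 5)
Your proposal is correct and follows essentially the same route as the paper: complexify, pass to the polar curve $\{dg\wedge df=0\}$ of $(f,g)$ on $\Lambda_p$, reduce the exponent to orders of $f$ and $g$ along branches of that curve, note that the polar curve is Noetherian of degree $2(d+\delta-1)$, and close by applying Theorem~\ref{thm:main} to bound the relevant deflicity. The only cosmetic difference is that the paper organizes the argument around the Puiseux expansion $f=\sum c_i g^{\lambda_i}$ along the critical-value curve $\Delta\subset\C^2$ and bounds both the numerator $r$ and denominator $s$ of the leading exponent, whereas you phrase it directly in terms of $\sup_\gamma \ord(f\circ\gamma)/\ord(g\circ\gamma)$ and the crude bound $\ord(g\circ\gamma)\ge1$; this yields the same estimate. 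The step you flag as ``non-routine'' --- that extremal arcs lie on the polar locus --- is handled in the paper by the short Lagrange-multiplier observation that the minimizers of $|f|$ on level sets of $|g|$ map into the polar curve (and, by the hypothesis $\{f=0\}\subset\{g=0\}$, not into $\{f=0\}$); your appeal to the Teissier--L\^e / Bochnak--Risler characterization is the standard black-box version of the same fact, so either is fine.
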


The proof below is essentially the proof from \cite{Gab:MultPfaffian},
with Theorem~\ref{thm:main} replacing \cite[Theorem
2.1]{Gab:MultPfaffian}. Denote the complexifications of $f,g$ by the
same letters. They are evidently Noetherian functions, defined by the
(complexification of the) same Noetherian chain. Denote by
$\Delta\subset\C^2$ the polar curve of $f$ with respect to $g$, i.e.
the set of critical values of the mapping $(f,g):\Lambda_p\to\C^2$.

\begin{Lem}
Let $f=\sum c_i g^{\lambda_i}$ be the Puiseux expansion of an irreducible 
component $\Delta'\not=\{f=0\}$ of $\Delta$. Let $s$ be the least common 
denominator of $\lambda_i$, and let  $r=s\lambda_1$. Then 
$$
r,s\le \left(2(d+\delta-1)(m+n)\right)^{16(m+n)^{20n+3}}
$$
\end{Lem}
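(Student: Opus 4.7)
The plan is to interpret $r$ and $s$ as intersection multiplicities at the origin of $\C^2$ and then bound each of them using the deformation multiplicity in Theorem~\ref{thm:main}. Parameterizing the branch $\Delta'$ by a local uniformizer $t$ gives $g=t^s$ and $f=c_1 t^r+O(t^{r+1})$, so $s$ (respectively $r$) equals the number of points of $\Delta'\cap\{g=\e\}$ (respectively $\Delta'\cap\{f=\e\}$) near the origin for small generic $\e$. Thus it suffices to provide upper bounds for these two intersection counts separately; by symmetry the argument for $r$ is identical to that for $s$ after swapping the roles of $f$ and $g$, so I focus on $s$.

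I would lift this count to the critical locus $C\subset\Lambda_p$ of the map $(f,g)\:\Lambda_p\to\C^2$. Let $C'$ be a branch of $C$ whose $(f,g)$-image is $\Delta'$. Since $\Delta'\ne\{f=0\}$, neither $f$ nor $g$ is locally constant on $C'$, so $(f,g)\:C'\to\Delta'$ is a finite surjection of some degree $k\ge 1$, giving $|C'\cap\{g=\e\}|=ks\ge s$ for small generic $\e$, and every such point converges to $p$ as $\e\to 0$.

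To fit this into the framework of Theorem~\ref{thm:main}, I must present a Noetherian superset of $C$ cut out by exactly $n-1$ equations in the chain. After a generic linear change of the $x$-coordinates I may assume that $\pd{f}{x_1}$ and $\pd{g}{x_1}$ do not vanish simultaneously along $C'$ near $p$, and set
\[
\psi_i=\pd{f}{x_1}\cdot\pd{g}{x_{i+1}}-\pd{f}{x_{i+1}}\cdot\pd{g}{x_1},\qquad i=1,\ldots,n-1.
\]
Each $\psi_i$ is Noetherian in the original chain of degree at most $2(d+\delta-1)$, and the locus $X=\{\psi_1=\cdots=\psi_{n-1}=0\}$ contains $C$; indeed wherever the first column of the Jacobian of $(f,g)$ is nonzero, the vanishing of the $\psi_i$ forces every other column to be proportional to it, so $X$ coincides with $C$ locally. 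Consequently the $\ge s$ points of $C'\cap\{g=\e\}$ near $p$ are isolated in $X\cap\{g=\e\}$ and converge to $p$, so the deflicity of $X$ with respect to $\rho=g$ is at least $s$. Theorem~\ref{thm:main} applied with parameters $(m,n,\delta,2(d+\delta-1))$ then yields the claimed bound.

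The main obstacle is the isolation step: $X$ may strictly contain $C$ along the locus $Y=\{\pd{f}{x_1}=\pd{g}{x_1}=0\}$, which for $n\ge 3$ can have dimension $n-2>1$ and pass through $p$. One must check that a generic linear choice of $x_1$ keeps $Y$ from containing $C'$ and, more delicately, from meeting the finite sets $C'\cap\{g=\e\}$ for small generic $\e$. This rests on the fact that both $\mathrm df$ and $\mathrm dg$ are nonzero somewhere on $C'$, which is precisely what the hypothesis $\Delta'\ne\{f=0\}$ (together with the analogous non-collapse of $g$ onto $\Delta'$) guarantees.
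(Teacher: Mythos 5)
Your proposal follows the same broad strategy as the paper: lift the Puiseux data to the critical locus (polar variety) of $(f,g)$ on $\Lambda_p$, note that this locus is a Noetherian set of degree $2(d+\delta-1)$, and invoke Theorem~\ref{thm:main} to bound the number of points of $\{g=\epsilon\}$ (resp.\ $\{f=\epsilon\}$) on that locus that converge to $p$. The degree calculation and the choice of $\rho=g$ (resp.\ $\rho=f$) match the paper.

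There is, however, a genuine gap in the isolation step, and it is not the one you flag. You assert that because neither $f$ nor $g$ is locally constant on $C'$, the map $(f,g)\:C'\to\Delta'$ is a \emph{finite} surjection, and hence $C'\cap\{g=\epsilon\}$ contains $\geq s$ isolated points converging to $p$. This does not follow: the component $C'=\Sigma'$ of the critical set mapping onto the curve $\Delta'$ can have dimension strictly greater than~$1$ (for instance, $f$ and $g$ might both factor through a common submersion of $C'$ to a curve). In that case $C'\cap\{g=\epsilon\}$ is positive-dimensional, no point of it is isolated in $X\cap\{g=\epsilon\}$, and the deflicity argument gives nothing. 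Your final paragraph worries instead about the excess locus $Y=\{\pd{f}{x_1}=\pd{g}{x_1}=0\}$ where your $n-1$ minors fail to cut out the critical set; that is a real (if secondary) concern, but it is distinct from, and does not subsume, the possible high-dimensionality of $C'$ itself.

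The paper handles exactly this point with a dimension-reduction step that your proof omits: if the intersections are not isolated, cut by a generic linear subspace $L'$ of suitable dimension. A standard fact about polar curves guarantees that $\Delta'$ remains an irreducible component of the polar curve of $(f|_{L'},g|_{L'})$, so the whole problem descends to a lower-dimensional ambient space where the critical component in question is a curve; one then applies the same argument there. Without this slicing (or some substitute), your proof only covers the case $\dim C'=1$. The remaining concern you raise --- that a generic choice of the $x_1$-direction keeps $Y$ from containing $C'$ and from meeting $C'\cap\{g=\epsilon\}$ --- is also left as an assertion, but it becomes moot once the slicing reduction is in place, since the paper's $X=\{dg\wedge df=0\}$ (defined by all $2\times2$ minors) is then automatically of the right dimension.
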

\begin{proof}
  Let $\Sigma'\subset\Lambda_p$ be an irreducible component of the
  critical set of the mapping $(f,g)$ whose image is $\Delta'$. Then
  $r$ is at most the number of points in
  $\{f(x)=\epsilon, x\in\Sigma'\}$ converging to $p$ as
  $\epsilon\to 0$, and $s$ is at most the number of points in
  $\{g(x)=\epsilon, x\in\Sigma'\}$ converging to $p$ as
  $\epsilon\to 0$. We can assume that these points are isolated: after
  cutting by a linear space $L'$ of suitable dimension, the polar
  curve of the restriction of $f$ to $L'$ with respect to the
  restriction of $g$ to $L'$ will have $\Delta'$ as its irreducible
  component and the problem is reduced to a similar one in smaller
  dimension. Therefore it is enough to bound from above the deflicity
  of $X=\{dg\wedge df=0\}$ with respect to $f$ or $g$. But $X$ is a
  Noetherian set of degree $2(d+\delta-1)$, so Theorem~\ref{thm:main}
  implies the claim of the Lemma.
\end{proof}

\begin{proof}[Proof of Theorem~\ref{thm:Lojasiewicz}]
  Let $\Sigma$ be the union of points of minima of restrictions of
  $|f|$ to the intersection of level curves of $|g|$ with a small ball
  around $p$. We can assume that the closure of $\Sigma$ contains $p$,
  otherwise the problem is reduced to a similar one of lower
  dimension. Therefore the image of $\Sigma$ under $(f,g)$ belongs to
  a polar curve of $f$ relative to $g$, and not to $\{f=0\}$ by the
  condition. Therefore Theorem~\ref{thm:Lojasiewicz} follows from the
  previous Lemma.
\end{proof}

\subsection{Acknowledgments} We would like to express our gratitude to
A. Gabrielov, A. Khovanskii, P. Milman and S. Yakovenko for valuable
discussions and to the anonymous referee for numerous corrections and
helpful remarks.

\section{Background}

\subsection{Noetherian functions: integrability and multiplicity estimates}

A system of the form~\eqref{eq:noetherian-system} does not necessarily
admit an integral manifold through every point of $\C^{n+m}$, since we
do not assume that the vector fields $V_i$ commute globally. The
\emph{integrability locus} $\IL\subset\C^{n+m}$
of~\eqref{eq:noetherian-system} is defined to be the union of all the
integral manifolds of the system. In \cite{GK:MultNoetherian} it was
shown that $\IL$ is algebraic for any
system~\eqref{eq:noetherian-system}, and moreover that the following
theorem holds.

\begin{Thm}[\protect{\cite[Theorem~4]{GK:MultNoetherian}}] \label{thm:gk-integral}
  Let the system~\eqref{eq:noetherian-system} have parameters $m,n,\delta$.
  Then $\IL$ can be defined as the zero locus of a set of polynomials of
  degrees not exceeding
  \begin{equation}\label{eq:gk-integral}
    d_\IL = \frac{(m+1)(\delta-1)}{2}[2\delta(n+m+2)-2m-2]^{2m+2}+\delta(n+2)-1
  \end{equation}
\end{Thm}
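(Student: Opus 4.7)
The plan is to realize $\IL$ as the largest algebraic subvariety of $\C^{n+m}$ to which the distribution $\Xi$ is tangent and on which the Frobenius integrability condition holds, and then to give an effective algebraic construction of this object.

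First I would write down the Frobenius conditions explicitly. Since each $V_i$ contains $\pd{}{x_i}$ as its only $x$-component, the bracket $[V_i,V_j]$ has no $\pd{}{x_k}$-part and therefore lies in the span of $V_1,\ldots,V_n$ only when it vanishes identically. A direct computation yields
$$[V_i,V_j]=\sum_{k=1}^m\omega_{ijk}\pd{}{f_k},\qquad \omega_{ijk}=V_i(g_{jk})-V_j(g_{ik}),$$
and each $\omega_{ijk}\in\C_{n+m}$ has degree at most $2\delta-1$. Let $Y_0$ be the algebraic set cut out by all the $\omega_{ijk}$. Any integral manifold of $\Xi$ must be contained in $Y_0$, so $\IL\subset Y_0$.

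Next I would perform an iterative descent. The distribution $\Xi$ need not be tangent to $Y_0$, so I would define inductively
$$Y_{k+1}=Y_k\cap\{V_i(P)=0:P\in I(Y_k),\ i=1,\ldots,n\},$$
which is precisely the locus of points of $Y_k$ at which each $V_i$ is tangent to $Y_k$. Any integral manifold of $\Xi$ is contained in every $Y_k$, so by the Noetherian property of $\C_{n+m}$ the descending chain stabilizes at some $Y_{k_*}$, and up to components not supporting a genuine integral manifold (removed by a smooth-point argument on each irreducible component) we have $\IL=Y_{k_*}$.

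The remaining task is to effectivize this descent, which is where the main work lies. If $Y_k$ is cut out by polynomials of degree at most $d_k$, then each new equation $V_i(P)$ has degree at most $d_k+\delta-1$, giving linear-in-step growth of the cutting-out degrees. The delicate point is controlling the number of iterations $k_*$: a naive Krull-dimension bound only gives $k_*\le n+m$, and more importantly each iteration can cause the number of generators (and hence the Bezout-degree of the cut variety) to explode. The main obstacle is to tame this explosion. Following \cite{GK:MultNoetherian} I would encode the tangency conditions via Plücker coordinates of $\Xi$ and reformulate the descent as a chain of inclusions of length at most $2m+2$, each stage contributing a Bezout factor of $2\delta(n+m+2)-2m-2$. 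Combining this geometric length bound with the per-step degree increment and the contribution $\delta(n+2)-1$ coming from the description of $\IL$ in the ambient affine chart yields the displayed estimate for $d_\IL$.
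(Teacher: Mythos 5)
This theorem is not proved in the paper at all: it is imported verbatim from \cite[Theorem~4]{GK:MultNoetherian}, so there is no in-paper argument to compare against. Evaluated on its own merits, your proposal has the right qualitative shape (Frobenius obstructions, a tangency descent to a $\Xi$-invariant algebraic locus, then integration from smooth points), and the opening computation is correct: $[V_i,V_j]=\sum_k\omega_{ijk}\,\partial_{f_k}$ with $\omega_{ijk}=V_i(g_{jk})-V_j(g_{ik})$ of degree $\le 2\delta-1$, and since the bracket has no $\partial_{x_k}$ component it must vanish for tangency, so $\IL\subset Y_0$.

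The gap is that the entire quantitative content of the theorem — the explicit formula for $d_\IL$ — is not actually established; you defer it to the reference and only gesture at Pl\"ucker coordinates and a chain of length $2m+2$. That is precisely the part that needs a proof. In addition, the descent as you have set it up is not directly effectivizable: defining $Y_{k+1}$ via $V_i(P)=0$ for all $P\in I(Y_k)$ requires controlling generators of the \emph{radical} ideal $I(Y_k)$, and the degrees of radical generators are not bounded by the degrees of the equations cutting out $Y_k$; the Noetherian (ACC) argument guaranteeing stabilization is likewise non-effective, and (a minor slip) the chain $Y_0\supset Y_1\supset\cdots$ need not drop Krull dimension at each step, so even $k_*\le n+m$ does not follow from dimension alone. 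One would also need a real argument, beyond a ``smooth-point remark,'' that the stable locus is exactly the union of integral manifolds rather than a possibly larger $\Xi$-tangent set. None of the additive terms $\frac{(m+1)(\delta-1)}{2}$ and $\delta(n+2)-1$ in the stated bound are accounted for. As written, the proposal is a reasonable summary of the strategy of \cite{GK:MultNoetherian} but does not constitute a proof of the estimate~\eqref{eq:gk-integral}.
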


We now state the main result of \cite{GK:MultNoetherian}, namely an
upper bound for the multiplicity of an isolated common zero of a tuple
of Noetherian functions.

\begin{Thm}[\protect{\cite[Theorem~1]{GK:MultNoetherian}}] \label{thm:gk-mult}
  Let the system~\eqref{eq:noetherian-system} have parameters $m,n,\delta$
  and let $\psi_1,\ldots,\psi_n$ be Noetherian functions of degrees at most $d$
  with respect to this system. Then the multiplicity of any isolated solution
  of the equations $\psi_1=\cdots=\psi_n=0$ does not exceed the maximum of
  the following two numbers:
  \begin{eqnarray*}
    &\frac{1}{2}Q\((m+1)(\delta-1)[2\delta(n+m+2)-2m-2]^{2m+2}
      +2\delta(n+2)-2\)^{2(m+n)} \\
    &\frac{1}{2}Q\(2(Q+n)^n(d+Q(\delta-1))\)^{2(m+n)},
    \quad \text{where} \quad Q=e\ n\(\frac{e(n+m)}{\sqrt{n}}\)^{\ln n+1}
      \(\frac{n}{e^2}\)^n
  \end{eqnarray*}
\end{Thm}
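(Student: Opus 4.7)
The plan is to prove the estimate by induction on $n$, reducing the $n$-dimensional problem at each step to a one-dimensional multiplicity problem on an auxiliary trajectory, which was settled by Gabrielov in \cite{Gab:MultVectorFieldNew}. The two terms in the stated maximum should correspond to two regimes of this reduction: one dominated by the algebraic complexity $d_\IL$ of the integrability locus (Theorem~\ref{thm:gk-integral}), the other by the polynomial degrees $d,\delta$ and a combinatorial size parameter $Q$ arising from the ambient dimension $m+n$.

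First I would reinterpret the problem geometrically: by Remark~\ref{rem:integral-manifold}, each $\psi_i$ is the restriction to $\Lambda_p$ of a polynomial $P_i\in\C_{n+m}$ of degree at most $d$, and $\Lambda_p\subset\IL$ is an $n$-dimensional integral manifold of $\Xi$ inside $\C^{n+m}$. The multiplicity of $p$ as an isolated common zero of $\psi_1,\ldots,\psi_n$ equals the local intersection number of $\{P_1=0\}\cap\cdots\cap\{P_n=0\}$ with $\Lambda_p$ at $p$. The base case $n=1$ is Gabrielov's estimate for the order of vanishing of a polynomial of degree $d$ along the trajectory of a polynomial vector field of degree $\delta$ on $\C^{m+1}$, whose shape already matches the stated bound.

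For the inductive step I would form the auxiliary algebraic variety
\begin{equation*}
  W \;=\; \IL\,\cap\,\{P_1=0\}\,\cap\,\cdots\,\cap\,\{P_{n-1}=0\} \;\subset\; \C^{n+m},
\end{equation*}
and consider the germ of the curve $C=W\cap\Lambda_p$ at $p$. After a small perturbation of the $P_i$ (and if necessary of $\Lambda_p$ inside $\IL$) one may assume $C$ is a one-dimensional germ, and the multiplicity $\mu_p$ equals the sum over branches $C_j$ of $C$ of the local intersection number $(\{P_n=0\}\cdot C_j)_p$. It remains to bound (a) the number of branches of $C$ at $p$, and (b) the contribution of each branch. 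For (a) I would apply B\'ezout on $\C^{n+m}$: $W$ is cut out by polynomials of degrees bounded by $\max(d_\IL,d)$, so its algebraic degree, and hence the branch count at $p$, is controlled by a power of $\max(d_\IL,d)$ and this is where the first maximand in the theorem appears. For (b) I would adjoin to the original Noetherian chain the polynomials defining $W$ (viewed as additional dependent variables), realizing each branch $C_j$ as a trajectory of an auxiliary polynomial vector field on an enlarged affine space whose dimension and coefficient degree are controlled by $Q$ and $d+Q(\delta-1)$; invoking the one-dimensional bound on that trajectory then yields the second maximand.

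The main obstacle is branches of $C$ which pass through singular points of $\IL$ or of $W$, where neither ``$C_j$ is a smooth algebraic curve'' nor ``$C_j$ is the trajectory of a regular vector field'' holds verbatim. Handling this requires either a controlled desingularization of $W$ or an additional generic deformation of $\Lambda_p$ inside $\IL$, which is legitimate because $\IL$ is foliated by integral manifolds. Managing these deformations so that the inductive count remains stable, and so that the enlarged chain used in step (b) retains effective parameters, is the technical heart of the argument and is what forces the explicit constants like $Q$ and the exponent $2(m+n)$ in the stated bound.
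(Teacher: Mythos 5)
This theorem is imported verbatim from Gabrielov--Khovanskii (\cite[Theorem~1]{GK:MultNoetherian}) and the present paper offers no proof of it; so your sketch is being measured against the Gabrielov--Khovanskii argument, which, as the paper itself notes, is a ``topological deformation technique'' rather than an induction on $n$ of the kind you propose.

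The decisive gap in your sketch is the branch-counting step (a). You set $W=\IL\cap\{P_1=0\}\cap\cdots\cap\{P_{n-1}=0\}$ and claim that B\'ezout on $\C^{n+m}$ bounds the algebraic degree of $W$ ``and hence the branch count at $p$'' of the analytic curve germ $C=W\cap\Lambda_p$. This does not follow. The leaf $\Lambda_p$ is an analytic, generally non-algebraic, submanifold, so $C$ is an analytic curve germ whose number of branches at $p$ is not controlled by $\deg W$: a single smooth algebraic branch of $W$ can meet $\Lambda_p$ in arbitrarily many local branches. In fact the branch count of $C$ at $p$ is itself a Noetherian multiplicity: it is bounded by the multiplicity of $C$ at $p$, which equals the intersection number at $p$ of $\psi_1,\dots,\psi_{n-1}$ with a generic hyperplane section, i.e.\ an instance of the very quantity $\cN(m,n-1,\delta,d;0)$ one is trying to bound (and for which no base bound is yet available at that stage of your induction). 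As stated, the step is circular, and the ``two regimes'' you read off from the two maximands do not track the actual source of those terms, which come from the degree estimate $d_\IL$ for the integrability locus (Theorem~\ref{thm:gk-integral}) combined with a degree estimate for Lie-derivative ideals, plugged into a single algebraic--topological count, not from a branch-count $\times$ per-branch product.

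Step (b) also needs real work that you only gesture at: an irreducible branch $C_j$ of an analytic curve germ is generically not a trajectory of a polynomial vector field, nor does adjoining the defining equations of $W$ to the chain by itself produce a Noetherian chain with effectively controlled parameters along a possibly singular branch. Gabrielov's one-dimensional theorem applies to trajectories through non-singular points of a polynomial vector field; pushing it to a branch of a singular intersection requires either a controlled resolution (whose complexity must be bounded effectively -- this is precisely the kind of difficulty the topological approach avoids) or a perturbation argument that, contrary to your remark, cannot be made ``legitimate'' merely because $\IL$ is foliated: one must show the multiplicity at $p$ is semicontinuous under the proposed perturbation and that the perturbed data stay within the same effective degree class. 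The Gabrielov--Khovanskii proof circumvents all of this by expressing the local multiplicity through the Euler characteristic of a Milnor fibre of an auxiliary deformation and estimating that global algebraic quantity directly; your sketch would need a different and more delicate mechanism before it could yield the stated bound.
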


\subsection{Multiplicity operators}
\label{sec:multiplicity operators}

We shall make extensive use of the notion of multiplicity operators
introduced in~\cite{BN:MultOp}. The multiplicity operators are a
collection of partial differential operators which can be used to
study the (local structure of) solutions of $n$ equations in $n$
variables. They may be thought of as playing a role similar to the
usual derivatives in the case of functions of a single variable. For
the convenience of the reader we recall in this section the basic
definitions and properties of multiplicity operators that shall be
used in the sequel.

Let $F=(F_1,\ldots,F_n):B^n(0,1)\to\C^n$ be a holomorphic mapping
extendable to a neighborhood of the unit ball $B^n(0,1)$. We will
denote by $\norm{\cdot}$ the maximum norm on the unit ball. As a
notational convenience, all holomorphic functions considered in this
subsection are assumed to be holomorphic in the unit disc.

For every $k\in\N$ there is a finite family of polynomial
differential operators $\{\bmo_B^\alpha\}$ called the \emph{basic
  multiplicity operators}. Any operator $\mo$ in the convex hull of
this set is called a \emph{multiplicity operator} of order $k$. Every
multiplicity operator of order $k$ is a polynomial differential
operator of order $k$ and degree $\dim J_{n,k}-k$, where $J_{n,k}$
denotes the space of $k$-jets of functions in $n$ variables.
We recall that $\dim J_{n,k}=\binom{n+k}k$.

\begin{Ex}
  In the familiar case $n=1$ the basic multiplicity operators of order
  $k$ are given (up to normalization by some universal constants) by
  the first $k$ derivatives. We will outline proofs for the various
  results presented below in this more familiar special case to help
  the reader appreciate the general statements.
\end{Ex}

We will use the convention that when applying a multiplicity
operator $\mo$ to a polynomial $P\in\C_{n+m}$, the derivatives
of the variables $f_1,\ldots,f_m$ with respect to $x_1,\ldots,x_n$
shall be given by the Noetherian system~\eqref{eq:noetherian-system}.
In this way $\mo(P)\in\C_{n+m}$ and moreover, if $\deg P=d$ then
$\deg\mo(P)\le d_M(n,\delta,d,k)$, where
\begin{equation}\label{eq:deg d_M}
  d_M(n,\delta,d,k) = {n+k \choose k}(d+k\delta)-k  
\end{equation}
At points where~\eqref{eq:noetherian-system} is integrable, this
agrees of course with applying $\mo$ to $P$ viewed as a Noetherian
function of $x_1,\ldots,x_n$ by restriction of $P$ to the integral manifold
of~\eqref{eq:noetherian-system} through the point. At points
where~\eqref{eq:noetherian-system} is not integrable $\mo(P)$ has no
intrinsic meaning and will in general depend on the order in which we
choose to evaluate repeated derivatives. We fix this order
arbitrarily. In fact we will only consider $\mo(P)$ on points where
the system is integrable, so this arbitrary choice will make no
difference in our arguments.

We will sometimes need to evaluate multiplicity operators with respect
to a linear subspace of the $x$-coordinates. If $\cT$ is such a linear
subspace and $\mo$ a multiplicity operator of dimension $\dim\cT$, we
will denote by $\mo_\cT$ the operator given by
\begin{equation}
  [\mo_\cT(F)](p) = [\mo(F\rest{\cT_p})](p), \qquad \cT_p:=p+\cT,
\end{equation}
viewed again as an operator acting on $\C_{n+m}$. Also, to simplify
the notation we denote by $\mo_p$ (resp. $\mo_{\cT,p}$) the
differential functional obtained by applying $\mo$ (resp. $\mo_\cT$),
followed by evaluation at the point $p$.

The following proposition is the basic property of the multiplicity
operators.
\begin{Prop}[\protect{\cite[Proposition~5]{BN:MultOp}}] \label{prop:mo-fundamental}
  We have $\mult F_p>k$ if and only if $\mo_p(F)=0$ for all multiplicity
  operators of order $k$.
\end{Prop}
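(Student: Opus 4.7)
The plan is to reduce the condition $\mult F_p>k$ to a rank condition on a $\C$-linear map between finite-dimensional jet spaces, and then define the basic multiplicity operators as the maximal minors of the associated matrix.

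The first step is a local-algebra criterion: setting
\[
  A_k(F) := \cO_p/\bigl((F_1,\ldots,F_n)+\mathfrak{m}_p^{k+1}\bigr),
\]
a finite-dimensional $\C$-algebra depending only on $j^k_p F$, I would prove $\mult F_p>k \iff \dim_\C A_k(F)>k$. Writing $R=\cO_p/(F)$ with maximal ideal $\bar\mathfrak{m}$, nilpotency index $s$ and length $\mu=\mult F_p$ (with $s=\mu=\infty$ when the zero is non-isolated), one has $A_k(F)=R/\bar\mathfrak{m}^{k+1}$ and thus $\dim_\C A_k(F)=\sum_{i=0}^{k}\dim_\C\bar\mathfrak{m}^i/\bar\mathfrak{m}^{i+1}$. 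By Nakayama applied to the finitely generated ideals $\bar\mathfrak{m}^i$, each factor is nonzero for $i<s$; hence when $s\ge k+1$ one gets $\dim A_k(F)\ge k+1$, while when $s\le k$ one has $\bar\mathfrak{m}^{k+1}=0$ and $\dim A_k(F)=\mu$. Therefore $\mu>k$ forces $\dim A_k(F)>k$, and conversely $\mu\le k$ forces $s\le \mu\le k$ and so $\dim A_k(F)=\mu\le k$.

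Next, identify $\cO_p/\mathfrak{m}_p^{k+1}\cong J_{n,k}$ of dimension $N:=\binom{n+k}{n}$. The image of $(F)+\mathfrak{m}_p^{k+1}$ in $J_{n,k}$ coincides with the image of the $\C$-linear multiplication map
\[
  \Phi(j^k_p F)\colon J_{n,k}^{\oplus n}\longrightarrow J_{n,k}, \qquad (g_i)_i\longmapsto \sum_{i=1}^n g_i\cdot j^k_p F_i\pmod{\mathfrak{m}_p^{k+1}},
\]
whose matrix entries depend $\C$-linearly on the coefficients of $j^k_p F$. So $\dim A_k(F)>k$ is equivalent to $\operatorname{rank}\Phi(j^k_p F)<N-k$, i.e.\ to the simultaneous vanishing of every $(N-k)\times(N-k)$ minor of this matrix. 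One then takes the basic multiplicity operators $\bmo_B^\alpha$ to be these minors, with $\alpha$ indexing a choice of $(N-k)$ rows and $(N-k)$ columns: each is a polynomial of degree $N-k$ in the entries of $\Phi$, hence a polynomial differential operator in $F$ of order $k$ and degree $\dim J_{n,k}-k$, matching the bookkeeping in the excerpt. Combining the two equivalences gives $\mult F_p>k \iff \bmo_{B,p}^\alpha(F)=0$ for every $\alpha$.

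To finish, since every multiplicity operator $\mo$ is a convex combination $\sum_\alpha\lambda_\alpha\bmo_B^\alpha$ with $\lambda_\alpha\ge 0$ and $\sum_\alpha\lambda_\alpha=1$, the vanishing of all $\mo_p(F)$ is equivalent to the vanishing of all $\bmo_{B,p}^\alpha(F)$: one direction is trivial, and for the other one takes $\lambda$ concentrated at a single $\alpha$. The main obstacle is the local-algebra criterion of the first step: one must carefully track the interaction between the length $\mu$ and the nilpotency index $s$ of $\bar\mathfrak{m}$, and handle the non-isolated case $\mu=\infty$ cleanly via Nakayama. Once that equivalence is in hand, the remaining steps are essentially linear algebra on a finite-dimensional jet space.
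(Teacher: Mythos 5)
The paper does not actually prove Proposition~\ref{prop:mo-fundamental}; it is imported verbatim from \cite{BN:MultOp}, and the surrounding text only recalls the defining data (a finite family $\{\bmo_B^\alpha\}$ of polynomial differential operators of order $k$ and degree $\dim J_{n,k}-k$, with multiplicity operators being their convex combinations) together with the $n=1$ sanity check that the basic operators reduce to the first $k$ derivatives. So there is no in-paper proof to match your argument against line by line.

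That said, your proposal is a correct and coherent reconstruction, and the bookkeeping is exactly consistent with what the paper records. The local-algebra step ($\mult F_p>k \iff \dim_\C A_k(F)>k$) is handled carefully: you correctly separate the cases via the nilpotency index $s$ of $\bar{\mathfrak m}$, use Nakayama to get $\dim \bar{\mathfrak m}^i/\bar{\mathfrak m}^{i+1}\ge 1$ for $i<s$, and observe $s\le\mu$ to close the converse, with the non-isolated case absorbed as $s=\mu=\infty$. The passage to the jet space is right: the image of $(F)+\mathfrak m_p^{k+1}$ in $J_{n,k}$ is precisely the image of the multiplication map $\Phi(j^k_pF)$, so $\dim A_k(F)>k$ becomes $\operatorname{rank}\Phi<N-k$, detected by the vanishing of all $(N-k)\times(N-k)$ minors. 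Defining the $\bmo_B^\alpha$ to be these minors yields polynomial differential operators of order $k$ and degree $N-k=\dim J_{n,k}-k$, matching the paper's declared degree and order, and for $n=1$ (where $N-k=1$) the minors are the entries of the Toeplitz multiplication matrix, i.e.\ the Taylor coefficients, recovering the stated special case. Finally, the convex-hull reduction is immediate since each $\bmo_B^\alpha$ lies in the convex hull. In short: your argument is correct and, as far as one can tell from the cues the paper gives, it is essentially the construction of \cite{BN:MultOp}; I see no gap.
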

In the case $n=1$, Proposition~\ref{prop:mo-fundamental} corresponds
to the familiar fact that $p$ is a zero of multiplicity $k$ of $F$ if
and only if $F(p)=F'(p)=\cdots=F^{(k)}(p)=0$.

We now state a result relating the multiplicity operators of $F$ to
its set of common zeros.

\begin{Thm}[\protect{\cite[Theorem~1]{BN:MultOp}}] \label{thm:mo-polydisc-zeros}
  Assume that $\norm{F}\le1$ and that $F$ has $k+1$ zeros (counted
  with multiplicities) in the polydisc $D_r^n$. Then for every $k$-th
  multiplicity operator $\mo$,
  \begin{equation}
    \CZ_{n,k} \abs{\mo_0(F))} < r 
  \end{equation}
  where $\CZ_{n,k}$ is a universal constant.
\end{Thm}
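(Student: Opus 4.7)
The plan is to exploit Proposition~\ref{prop:mo-fundamental}: if the $k+1$ zeros of $F$ were all confluent at a single point $q \in D_r^n$, so that $F$ had multiplicity $\ge k+1$ at $q$, then every $k$-th multiplicity operator would satisfy $\mo_q(F)=0$. Since $q$ lies within distance $O(r)$ of the origin, a first-order perturbation argument (together with Cauchy estimates on $\mo(F)$, viewed as a polynomial differential expression in the jet of $F$) would give $|\mo_0(F)|\lesssim r$. For genuinely scattered zeros one still wants to smear this idea across the cluster.

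I would first verify the inequality in one variable, where the basic multiplicity operators are, up to normalization, the Taylor coefficients $F^{(j)}(0)/j!$. Given zeros $z_1,\ldots,z_{k+1}\in D_r$ and $\norm{F}\le1$, a Blaschke/Jensen factorization yields $F(z)=Q(z)g(z)$ with $Q(z)=\prod_i(z-z_i)$ and $\norm{g}\le C_k$. The Taylor coefficient of $Q$ of order $j\le k$ is, up to sign, the elementary symmetric function $e_{k+1-j}(z_1,\ldots,z_{k+1})$, which is bounded by $\binom{k+1}{k+1-j}r^{k+1-j}\le\binom{k+1}{1}r$ since $r\le1$. Applying the Leibniz rule to $F=Qg$ and the Cauchy estimates $|g^{(j)}(0)/j!|\le C_k$ then gives $|F^{(k)}(0)/k!|\le C'_k\cdot r$, which is the desired inequality with $\CZ_{1,k}=1/C_k'$.

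For the several-variable case I would use the explicit description of the basic multiplicity operators $\bmo^\alpha$ from~\cite{BN:MultOp} to represent $\mo_0(F)$ as a Cauchy--Weil-type integral of $F$ over the distinguished boundary of the unit polydisc against a kernel built from jet data. Each component $F_i$ would be decomposed, via a several-variable Hermite-type interpolation anchored at the zeros $p_1,\ldots,p_{k+1}\in D_r^n$, as a sum of terms each vanishing at some $p_\ell$ and hence carrying a scalar factor of magnitude $O(r)$; the remaining holomorphic factors are controlled in sup-norm by a Schwarz-lemma argument on one-dimensional slices, exactly as in the Blaschke step above. Substituting into the integral representation, the $O(r)$ factor comes out of the integrand and all remaining constants are absorbed into a single universal $\CZ_{n,k}$.

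The main obstacle is the several-variable interpolation step. In one variable the factorization $F=Qg$ is canonical, but a scattered set of $k+1$ points in $\C^n$ admits no single ``divided-difference'' decomposition; the interpolation scheme has to be chosen to match the algebraic structure of the particular multiplicity operator $\mo$, relying on the explicit determinantal/residual formulas defining the basic $\bmo^\alpha$. Verifying that this matching is consistent and produces the clean bound $\lesssim r$ — rather than a degraded power like $r^{1/k}$ coming from a naive bound on just one of many competing terms — is the technical heart of the argument, and is where I would expect to spend most of the effort.
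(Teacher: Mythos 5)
The statement is imported from \cite{BN:MultOp}; the paper itself only sketches the case $n=1$, factoring $F=(z-z_0)\cdots(z-z_k)U(z)$ with $\norm{U}\le 2^{k+1}$ (under the harmless normalization $r<1/2$) and then reading off the bound on the first $k$ Taylor coefficients. Your $n=1$ argument is essentially identical: your $Q,g$ are the paper's polynomial factor and unit $U$, and your elementary-symmetric-function bookkeeping just makes the paper's ``straightforward to verify'' step explicit. Two small points: you should state the normalization $r<1/2$ (without it $g=F/Q$ is not controlled when a zero $z_i$ approaches the unit circle, while for $r$ bounded away from $0$ the estimate is trivial anyway because $\abs{\mo_0(F)}$ is bounded by a universal constant), and you should note that the same Leibniz estimate bounds every $F^{(j)}(0)/j!$ for $1\le j\le k$, not only $j=k$, since a $k$-th multiplicity operator in one variable is a convex combination of all of these.

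For $n\ge 2$ the paper gives no proof to compare against, and your sketch has a genuine gap which you yourself flag. The confluent-zero heuristic in your first paragraph is sound as far as it goes: $\mo(F)$ is a fixed polynomial in the $k$-jet of $F$, so Cauchy estimates under $\norm{F}\le 1$ give a universal Lipschitz bound for $\mo(F)$ on a slightly smaller polydisc, and $\mo_q(F)=0$ at a single $q\in D_r^n$ then forces $\abs{\mo_0(F)}=O(r)$. But for $k+1$ scattered zeros there is no canonical analogue of the one-variable factorization $F=Qg$, and the Hermite/Cauchy--Weil interpolation you propose is not pinned down: as you yourself acknowledge, one must choose a decomposition adapted to the particular basic operator $\bmo^\alpha$, and without carrying that out the competing interpolation terms yield only a weaker power of $r$. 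That matching step is the entire content of the theorem for $n>1$, and since it is not done here the proposal remains incomplete in the multivariable case.
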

We outline the proof of Theorem~\ref{thm:mo-polydisc-zeros} in the
case $n=1$. We assume for simplicity that $r<1/2$. Then by standard
arguments $F$ can be decomposed in the form
\begin{equation}
  F(z) = (z-z_0)\cdots(z-z_k)U(z)
\end{equation}
where $\norm{U}\le 2^{k+1}$ and $z_0,\ldots,z_k$ are the $k+1$ zeros
of $F$ in the disc of radius $r$. It is now straightforward to verify
that the first $k$ derivatives of $F$ are smaller than
$(\CZ_{1,k})^{-1}r$ for an appropriate constant $\CZ_{1,k}$.

We now state some results relating the multiplicity operators of $F$
to its growth around the origin and its behavior under perturbation.

\begin{Thm}[\protect{\cite[Theorem~2]{BN:MultOp}}] \label{thm:mo-sphere-growth}
  Let $\norm{F}\le1$ and $\mo$ a $k$-th multiplicity operator. There
  exist positive universal constants $A_{n,k},B_{n,k}$ with the
  following property:

  For every $r<\abs{\mo_0(F)}$ there exists $A_{n,k} r < \tilde r < r$ such that
  \begin{equation}\label{eq:mo-sphere-growth}
    \norm{F(z)} \ge B_{n,k} \abs{\mo_0(F)} \tilde r^k \mbox{ for every } \norm{z}=\tilde r.
  \end{equation}
\end{Thm}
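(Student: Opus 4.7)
My approach combines Theorem~\ref{thm:mo-polydisc-zeros}, which bounds the number of zeros of $F$ near $0$, with a Cartan-type minimum modulus estimate that converts this zero bound into a pointwise lower bound on $\norm{F}$ on a well-chosen sphere. The first step is to apply the contrapositive of Theorem~\ref{thm:mo-polydisc-zeros}: the hypothesis $r<\abs{\mo_0(F)}$ forces $F$ to have at most $k$ isolated zeros $z_1,\ldots,z_N$ (counted with multiplicities) inside the polydisc $D_{c_1 r}^n$, where $c_1=c_1(n,k)$ is a universal constant related to $\CZ_{n,k}$. In particular, the moduli $\norm{z_i}$ occupy at most $k$ values in the interval $[0,c_1 r]$.

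\textbf{Choosing a good radius.} Because there are only $k$ obstruction radii and the interval $(A_{n,k}r,r)$ has length proportional to $r$, a pigeonhole argument produces a radius $\tilde r$ in this interval such that the sphere $\norm{z}=\tilde r$ stays at distance at least $c_2 r/(k+1)$ from every $\norm{z_i}$, where $c_2=c_2(n,k)$ is universal. This is the several-variable analogue of the one-dimensional trick of finding an annular gap in the zero locus, and the constant $A_{n,k}$ is tuned precisely so that a definite fraction of the interval is free of such obstructions.

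\textbf{Minimum modulus.} For each point $z$ on the chosen sphere, consider the restriction of $F$ to the complex line through $0$ and $z$: this gives a one-variable holomorphic map $g$ of norm $\le 1$ with at most $k$ zeros in $\abs{\zeta}\le c_1 r$. A Blaschke-product factorization $g(\zeta)=P(\zeta)U(\zeta)$ with $\abs{P}=1$ on the boundary circle of the disk and $U$ zero-free inside then yields, on the chosen sphere, the Cartan-type lower bound $\abs{P(\tilde r)}\ge\bigl(c_2r/(k+1)\bigr)^k$ coming from the distance-to-zeros estimate, together with a lower bound on $\abs{U(\tilde r)}$ obtained by relating the jet of $F$ at $0$ to those of $P$ and $U$ via Leibniz and then applying a Harnack/Jensen argument to the harmonic function $\log\abs{U}$. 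The two bounds combine to give $\norm{F(z)}\ge B_{n,k}\abs{\mo_0(F)}\tilde r^k$ with a universal $B_{n,k}$.

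\textbf{Main obstacle.} The delicate step is passing from a per-line estimate to a uniform bound over the whole sphere. The multiplicity operator $\mo_0(F)$ involves mixed partial derivatives and does not factor through any single complex line, so the one-dimensional minimum modulus bounds along different directions must be glued with constants independent of direction. This likely requires either a genuinely multivariate Cartan-type lemma, or a careful covering argument on the unit sphere of directions that tracks how the constants in the Blaschke factorization vary as the complex line rotates; managing this uniformity is the principal technical content of the result beyond the already-proved Theorem~\ref{thm:mo-polydisc-zeros}.
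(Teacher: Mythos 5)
Your proposal takes a genuinely different route from the paper's sketch, but it has a gap that is more than the technical ``gluing'' issue you flag at the end. The paper's own (one-variable) outline is short and direct: write $F=P_k+O(z^{k+1})$ where $P_k$ is the degree-$k$ Taylor polynomial, note that $\norm{P_k}=\Omega(\abs{\mo_0(F)})$ \emph{by definition} of the multiplicity functional (which is built from the $k$-jet of $F$ at $0$), lower-bound $\abs{P_k}$ off small discs by Cartan's lemma, and let the polynomial part dominate the remainder. The quantity $\abs{\mo_0(F)}$ enters immediately and painlessly, because the multiplicity operator is precisely a functional on the jet that $P_k$ records. Your plan instead tries to recover a lower bound from zero-localization plus a Blaschke decomposition along lines through the origin, and this runs into a structural problem before the uniformity issue you raise: $F$ is a \emph{map} $B^n(0,1)\to\C^n$, not a scalar, so restricting it to a complex line gives a curve $\C\to\C^n$, not a one-variable holomorphic function. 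There is no Blaschke factorization $g=PU$, and no $\log\abs{U}$ to feed into Harnack/Jensen. One could try applying the argument to individual components $F_i$, but then the zero count from Theorem~\ref{thm:mo-polydisc-zeros} (which is about common zeros of the full system) does not control the zeros of a single $F_i$ along a line, and $\norm{F(z)}$ is a max over components so the lower bound must be certified componentwise in a direction-dependent way.

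The deeper point is that zero-counting alone cannot produce a lower bound on $\norm{F}$: a map with no zeros at all near $0$ can still be uniformly small, so any proof must feed in size information about the jet of $F$ at $0$, which is exactly what $\abs{\mo_0(F)}$ encodes. In your sketch that information is supposed to come in through ``relating the jet of $F$ at $0$ to those of $P$ and $U$ via Leibniz,'' but this is the crux, and it is where the one-variable Blaschke framing breaks down in several variables. The paper's route avoids all of this by working with $P_k$ directly, trading the Blaschke factorization of $F$ for the Cartan lemma applied to the polynomial approximation $P_k$; the ``gap avoidance'' you do by pigeonhole on zero moduli is subsumed in the exceptional set of the Cartan lemma. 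If you want to keep your zero-counting first step (which is sound as a contrapositive of Theorem~\ref{thm:mo-polydisc-zeros}), the missing ingredient is still a direct quantitative link from $\abs{\mo_0(F)}$ to a lower bound for the truncated jet of $F$, after which Cartan (or a multivariate analogue) can be applied; without that link the argument does not close.
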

We outline the proof of Theorem~\ref{thm:mo-sphere-growth} in the
case $n=1$. If $P_k(z)$ denotes the $k$-th Taylor polynomial of
$F$ then
\begin{equation}\label{eq:F-taylor-decomp}
  F(z)=P_k(z)+O(z^{k+1}), \qquad \norm{P_k}=\Omega(\abs{\mo_0(F)}).
\end{equation}
Using polynomial inequalities, for instance the Cartan lemma, one can
show that $|P_k(z)|=\Omega(\abs{\mo_0(F)} z^k)$ for $z$ outside a set of small
discs around the zeros of $P_k$. Subsequently for $|z|$ sufficiently
smaller than $\abs{\mo_0(F)}$ we see that $\abs{P_k(z)}$ dominates
$O(z^{k+1})$, allowing one to derive~\eqref{eq:mo-sphere-growth}
from~\eqref{eq:F-taylor-decomp}.

\begin{Cor}[\protect{\cite[Corollary~15]{BN:MultOp}}] \label{cor:mo-power-pert}
  Let $\norm{F},\norm{G}\le1$ and $\mo$ be a $k$-th multiplicity
  operator. There exist universal constants $A''_{n,k},B''_{n,k}$ with
  the following property:

  For every $\norm{G(0)}<r<B''_{n,k} \abs{\mo_0(F)}$ there exists
  $A''_{n,k} r < \tilde r < r$ such that
  \begin{equation}
    \norm{F(z)} > \norm{G^{k+1}(z)} \text{ for every } \norm{z}=\tilde r
  \end{equation}
  where $G^{k+1}$ is taken component-wise. 
\end{Cor}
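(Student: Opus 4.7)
\medskip

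\textbf{Proof plan for Corollary~\ref{cor:mo-power-pert}.} The idea is to combine the sphere-growth bound of Theorem~\ref{thm:mo-sphere-growth} applied to $F$ with an elementary Schwarz/Cauchy-type upper bound for $G$ on a small sphere, then tune $r$ so that the former dominates the latter. Since the corollary promises universal constants that depend only on $n$ and $k$, we do not need sharp numerical bookkeeping; just a chain of inequalities.

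First I would invoke Theorem~\ref{thm:mo-sphere-growth} directly: provided $r<\abs{\mo_0(F)}$, there exists $\tilde r$ with $A_{n,k}r<\tilde r<r$ such that
\begin{equation*}
  \norm{F(z)} \ge B_{n,k}\,\abs{\mo_0(F)}\,\tilde r^{\,k} \quad \text{for every } \norm{z}=\tilde r.
\end{equation*}
This is the only really non-trivial ingredient; the constants $A_{n,k},B_{n,k}$ from that theorem will be absorbed into $A''_{n,k},B''_{n,k}$.

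Next I would estimate $\norm{G}$ on the same sphere. Since $\norm{G}\le 1$ on the unit ball, Cauchy's estimates give a universal Lipschitz constant $L_n$ on, say, $\norm{z}\le 1/2$, so $\norm{G(z)-G(0)}\le L_n\norm{z}$. Combined with $\norm{G(0)}<r$ and $\tilde r<r$, this yields
\begin{equation*}
  \norm{G(z)} \le \norm{G(0)} + L_n\tilde r \le (1+L_n)\,r \quad \text{for every } \norm{z}=\tilde r.
\end{equation*}
Taking $(k+1)$-st powers componentwise (and using that the max norm is submultiplicative under coordinatewise powers),
\begin{equation*}
  \norm{G^{k+1}(z)} \le (1+L_n)^{k+1}\,r^{\,k+1}.
\end{equation*}

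Finally, the inequality $\norm{F(z)}>\norm{G^{k+1}(z)}$ on the sphere $\norm{z}=\tilde r$ will follow from
\begin{equation*}
  B_{n,k}\,\abs{\mo_0(F)}\,\tilde r^{\,k} > (1+L_n)^{k+1}\,r^{\,k+1}.
\end{equation*}
Using $\tilde r>A_{n,k}r$ on the left, this is implied by
\begin{equation*}
  r < \frac{B_{n,k}\,A_{n,k}^{\,k}}{(1+L_n)^{k+1}}\;\abs{\mo_0(F)},
\end{equation*}
so defining $B''_{n,k}$ to be the minimum of this constant and $1$ (to preserve the hypothesis $r<\abs{\mo_0(F)}$ of Theorem~\ref{thm:mo-sphere-growth}), and setting $A''_{n,k}:=A_{n,k}$, gives the claim.

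The only mild obstacle is the passage from $\norm{G(0)}$ being small to $\norm{G}$ being small on a shell around the origin; this is handled by the Cauchy Lipschitz estimate above, and relies in a crucial but harmless way on restricting to a bounded subdisc (which is fine because Theorem~\ref{thm:mo-sphere-growth} automatically furnishes $\tilde r<r<1$). Everything else is an algebraic rearrangement of the constants coming out of Theorem~\ref{thm:mo-sphere-growth}.
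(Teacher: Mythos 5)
Your argument is correct, and it is the natural way to extract this corollary from Theorem~\ref{thm:mo-sphere-growth}; note that the paper itself does not prove the statement but cites it from \cite{BN:MultOp}, so there is no in-paper proof to compare against, but your derivation is almost certainly the intended one.

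One small point deserves explicit justification: you assert that Theorem~\ref{thm:mo-sphere-growth} "automatically furnishes $\tilde r<r<1$," but the hypothesis of that theorem is only $r<\abs{\mo_0(F)}$, and $\abs{\mo_0(F)}$ need not be at most $1$. To ensure the shell $\norm{z}=\tilde r$ actually lies in the region where your Cauchy--Lipschitz estimate for $G$ applies (say $\norm{z}\le 1/2$), you should first observe that $\abs{\mo_0(F)}\le C_{n,k}$ for a universal constant $C_{n,k}$: indeed $\mo_0$ is a fixed polynomial of bounded degree in the $k$-jet of $F$ at the origin, and this jet is uniformly bounded by Cauchy estimates since $\norm{F}\le 1$. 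Then taking
\[
B''_{n,k}:=\min\left\{1,\ \frac{1}{2C_{n,k}},\ \frac{B_{n,k}A_{n,k}^{\,k}}{(1+L_n)^{k+1}}\right\},\qquad A''_{n,k}:=A_{n,k},
\]
guarantees simultaneously that $r<\abs{\mo_0(F)}$, that $\tilde r<r\le 1/2$, and that the final inequality $B_{n,k}\abs{\mo_0(F)}\tilde r^{\,k}>(1+L_n)^{k+1}r^{\,k+1}$ holds. With that amendment the chain of estimates closes cleanly, and the rest of your bookkeeping (including the observation that the max norm satisfies $\norm{G^{k+1}}=\norm{G}^{k+1}$ exactly, not merely submultiplicatively) is fine.
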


In particular, Corollary~\ref{cor:mo-power-pert} in combination
with the Rouch\'e principle implies that if $F,G$ are as in the corollary
and $E$ is a holomorphic function satisfying $\norm{E}<1$ then
\begin{equation}\label{eq:mo-power-rouche}
  \#\{z:F(z)=0, \norm{z}<\tilde r\} = \#\{z:F(z)+E(z) G^{k+1}(z)=0, \norm{z}<\tilde r\}.
\end{equation}

Finally, we state a result relating to multiplicity operators of $F$ to
its rate of growth along analytic curves.

\begin{Thm}[\protect{\cite[Theorem~3]{BN:MultOp}}] \label{thm:mo-curve-growth}
  Let $\mo$ be a multiplicity operator of order $k$ and
  $\gamma\subset(\C^n,0)$ a germ of an analytic curve. Then
  \begin{equation}
    \ord_\gamma \mo(F) \ge \min\{\ord_\gamma f_i: i=1,\ldots,n\} - k
  \end{equation}
\end{Thm}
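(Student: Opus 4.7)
The proof proceeds in three steps: a reduction to basic multiplicity operators, a reduction to the one-variable case via adapted coordinates, and an invariance argument to show that transverse-derivative contributions cancel.

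Since any multiplicity operator $\mo$ of order $k$ is a convex combination of basic operators $\bmo_B^\alpha$, and the order along $\gamma$ of a linear combination is bounded below by the minimum of the summands' orders, it suffices to prove $\ord_\gamma \bmo_B^\alpha(F) \ge m - k$ for every basic $\bmo_B^\alpha$, where $m := \min_i \ord_\gamma F_i$. Parametrize $\gamma$ as $\gamma:(\C,0)\to(\C^n,0)$ and view the claim as an estimate on the $t$-order of $\bmo_B^\alpha(F)(\gamma(t))$ at $t=0$.

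A naive monomial-by-monomial bound on $\bmo_B^\alpha(F)(\gamma(t))$ fails, because the $k$-jet of $F$ at $\gamma(t)$ includes transverse derivatives $\partial^\beta F_i$ whose restriction to $\gamma$ can have arbitrarily low order in $t$ (witness $F_i(x)=x_2$, $\gamma=\{x_2=0\}$: here $\ord_\gamma F_i = \infty$ but $\partial_{x_2}F_i\equiv 1$). The requisite cancellation must be extracted from the specific structure of $\bmo_B^\alpha$. To this end, I would choose analytic coordinates near $0$ in which the normalization of $\gamma$ is the $x_1$-axis, and split $F(x_1,x') = H(x_1) + R(x_1,x')$ with $H=F|_\gamma$ of component orders $\ge m$ in $x_1$ and $R$ vanishing identically on the axis. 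The key observation is that, by Proposition~\ref{prop:mo-fundamental}, multiplicity operators detect an intrinsic invariant of $F$ at a point --- namely the multiplicity of its zero --- which is preserved under gauge transformations $F\mapsto A\cdot F$ by holomorphic invertible matrices $A$. Leveraging this invariance, the transverse correction $R$ can be absorbed into a gauge factor of order $0$, reducing the estimate to one for $\mo$ applied to the one-variable tuple $H(x_1)$.

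In the one-variable case the statement is immediate: basic multiplicity operators are (up to universal constants) the derivatives $\partial^j$ for $0\le j\le k$, so $\ord H^{(j)} \ge \ord H - j \ge m - k$. The main obstacle is to rigorously establish the gauge-invariance at the level of \emph{individual} basic multiplicity operators, rather than only at the level of their joint vanishing locus as given by Proposition~\ref{prop:mo-fundamental}. This requires either a direct computation using the explicit construction of $\bmo_B^\alpha$ in \cite{BN:MultOp}, or a strengthened universal characterization of $\mo$ as a natural invariant of the $k$-jet; this is where the substantive content of the argument lies. Once the transformation law is in hand, the reduction to the one-variable case and hence the claim follows at once.
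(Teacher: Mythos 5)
The paper quotes this result as \cite[Theorem~3]{BN:MultOp} and gives no proof of its own (it only remarks on the $n=1$ case), so there is no internal argument to compare against; I can only assess whether your proposal is self-consistent. As you yourself flag, the crux --- the ``gauge-invariance'' of \emph{individual} basic multiplicity operators under $F\mapsto A\cdot F$ --- is left open. But the problem is more serious than an unproved lemma: the reduction you propose, taken at face value, cannot be correct. If you could show $\ord_\gamma\mo(F)\ge\ord_\gamma\mo(H)$ (or equality) where $H=F\vert_\gamma$ is extended constantly in the transverse directions, you would be done for the wrong reason: the tuple $H(x_1)$, viewed as a map $(\C^n,0)\to\C^n$ depending only on $x_1$, has non-isolated zeros along the entire $x_1$-axis whenever any $H_i$ vanishes there, and hence by Proposition~\ref{prop:mo-fundamental} every multiplicity operator of every order satisfies $\mo(H)\equiv 0$ along $\gamma$. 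Your reduction would therefore yield $\ord_\gamma\mo(F)=\infty$, which is false in general (the theorem claims only $\ge m-k$, and strict inequality fails already for $F=(x_2,x_1^m)$, $\gamma$ the $x_1$-axis, $\mo$ the Jacobian). In the last step you in fact quietly switch from the $n$-variable operators acting on the $n$-tuple $H(x_1)$ to the \emph{one}-variable operators $\partial^j$ acting on the scalar $H(x_1)$; these are different objects, and passing between them is exactly the missing content, not a corollary of any gauge law.

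Even setting that aside, a literal gauge-invariance $\mo(AF)\sim\mo(F)$ cannot hold off the zero locus of $F$: for the order-$1$ operator in $n=2$ (a Jacobian determinant) one has $D(AF)=A\cdot DF+(DA)\cdot F$, so $\det D(AF)=\det A\cdot\det DF$ only at zeros of $F$, and you are precisely trying to estimate the operator at the non-zero points $\gamma(t)$. There is also a minor issue at the start: for a singular $\gamma$ you cannot choose ambient coordinates making $\gamma$ the $x_1$-axis (its normalization is an abstract disc, not a submanifold), so you must work with the composition $F\circ\nu$, and then the derivatives appearing in $\mo$ are derivatives in the ambient $\C^n$, not in $t$, which is again the same transverse-derivative difficulty you identified. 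Any correct proof has to use the actual construction of the basic operators in \cite{BN:MultOp} and argue directly that each monomial in $\{\partial^\alpha F_i:\,|\alpha|\le k\}$ of the right weighted degree contributes $t$-order at least $m-k$ once the relevant cancellations are accounted for; Proposition~\ref{prop:mo-fundamental} alone (a statement about the common zero locus of the operators) is too weak to carry this.
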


In the case $n=1$, Theorem~\ref{thm:mo-curve-growth} corresponds to
the familiar fact that taking a $k$-th derivative of a function can
decrease the order of its zero by at most $k$.

\section{Structure of the proof}
\label{sec:proof-structure}

Consider a system~\eqref{eq:noetherian-system} with parameters
$n,m,\delta$ and the corresponding ring of Noetherian functions $S$.
By Remark~\ref{rem:integral-manifold} we view $S$ as the restriction
of the ring $\C_{n+m}$ to an integral manifold $\Lambda_p$
of~\eqref{eq:noetherian-system}.

Let $\rho\in S$ with $\deg\rho\le d$ and let $X\subset\Lambda_p$ be a germ of
a Noetherian set
\begin{equation}
  X = \{x\in\Lambda: \psi_1(x)=\cdots=\psi_{n-1}(x)=0 \}
\end{equation}
at the point $p$, where $\psi_i\in S$ and $\deg\psi_i\le d$ for $i=1,\ldots,n-1$.

By definition, $\rho=R\rest{\Lambda_p}$ and
$\psi_i=P_i\rest{\Lambda_p}$ where $R,P_1,\ldots,P_{n-1}$ are elements
of $\C_{n+m}$ of degrees bounded by $d$. To simplify the notation we
will let $P$ denote the tuple $P_1,\ldots,P_{n-1}$. When the point $p$
and the integral manifold $\Lambda_p$ are clear from the context, we
will refer to the deflicity of $X$ with respect to $\rho$ at $p$
simply as the deflicity of $P,R$.

\subsection{Analytic expression for the deflicity}
  
Since Noetherian functions are analytic, the germ $X$ admits a
decomposition into irreducible analytic components. Let $\{\gamma_i\}$
denote the components of $X$ which are curves and such that
$\rho\rest{\gamma_i}\not\equiv\const$ (where each curve is counted
with an associated multiplicity $m_i$). We will call these curves the
\emph{good curves} of $X$ with respect to $\rho$. All other components
(whether they are curves on which $\rho$ is constant or
higher-dimensional components) will be called \emph{bad components}.

To motivate this definition, note that since $\rho$ is an analytic
function, it cannot admit isolated zeros on components of $X$ that
have dimension greater than one, and therefore the bad components do
not contribute any isolated zeros in the definition of the deflicity
of $X$ with respect to $\rho$. On the other hand, the number of
solutions of $\rho=y$ on a good curve $\gamma_i$ converging to $p$ as
$y\to\rho(p)$ is equal by definition to $\mult_{\gamma_i}(\rho-\rho(p))$. Since each
good curve is transversal to $\rho^{-1}(\e)$ for all sufficiently
small $\e$, each solution of $\rho=\e$ on $\gamma_i$ should be counted
with multiplicity $m_i$. To conclude, we have following proposition.

\begin{Prop}\label{prop:def-of-deflicity}
  The deflicity of $X$ with respect to $\rho$ at the point $p$ is given by
  \begin{equation}
    \sum_i m_i \mult_{\gamma_i}(\rho-\rho(p))
  \end{equation}
  where the sum is taken over the good components $\gamma_i$.
\end{Prop}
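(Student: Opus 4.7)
The plan is to decompose the analytic germ $X$ at $p$ into irreducible components and track separately where the isolated points of $\rho^{-1}(y)\cap X$ can come from. Write $X = \bigcup_\alpha Y_\alpha$ with multiplicities $m_\alpha$ (the multiplicities of the irreducible components of the analytic scheme defined by $\psi_1=\cdots=\psi_{n-1}=0$). I want to show that for $y$ close to but not equal to $\rho(p)$, the points of $\rho^{-1}(y)\cap X$ near $p$ lie only on the good curves, and to compute how many each good curve contributes.

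First I would rule out the bad components. If $\dim Y_\alpha \ge 2$, then $\rho-y$ restricted to $Y_\alpha$ is either identically zero or defines a hypersurface in $Y_\alpha$; in either case $\rho^{-1}(y)\cap Y_\alpha$ has no isolated points. If $Y_\alpha$ is a curve with $\rho\rest{Y_\alpha}\equiv c$, then for $y$ near $\rho(p)$ with $y\neq c$ the set $\rho^{-1}(y)\cap Y_\alpha$ is empty; if in addition $c=\rho(p)$ then the one-parameter family of points of $Y_\alpha$ itself is non-isolated in $\rho^{-1}(\rho(p))\cap X$, and moreover these points do not appear as limits of isolated points from nearby fibers (a quick local argument: near a generic point of such a $Y_\alpha$, the map $\rho$ is locally constant on $Y_\alpha$, so any isolated solution of $\rho=y$ on $X$ passing close to $Y_\alpha$ must live on a different component). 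Hence bad components contribute nothing to the deflicity.

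Next I analyze a good curve $\gamma_i$. Choose a Puiseux parameterization $t\mapsto \gamma_i(t)$, $\gamma_i(0)=p$, of the normalization of $\gamma_i$. By definition of $\mult_{\gamma_i}(\rho-\rho(p))$, the pullback is $\rho(\gamma_i(t))-\rho(p) = c\,t^{\mu_i}(1+O(t))$ with $c\neq 0$ and $\mu_i=\mult_{\gamma_i}(\rho-\rho(p))$. For $y$ close to $\rho(p)$ with $y\neq \rho(p)$, the equation $\rho(\gamma_i(t))=y$ has exactly $\mu_i$ simple roots near $t=0$, giving $\mu_i$ distinct points of $\gamma_i$ near $p$. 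Provided $|y-\rho(p)|$ is small enough, each such point $q$ lies in the smooth locus of $\gamma_i$ and away from every other component of $X$; moreover $d\rho(q)\neq 0$ on $\gamma_i$ (otherwise $\rho-\rho(p)$ would have higher order along $\gamma_i$), so $\rho^{-1}(y)$ is transversal to $\gamma_i$ at $q$.

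Finally I combine the two steps. At such a point $q$ the only component of $X$ passing through is $\gamma_i$ with multiplicity $m_i$, so the local intersection multiplicity of the scheme $\rho^{-1}(y)\cap X$ at $q$ equals $m_i\cdot(\gamma_i\cdot \rho^{-1}(y))_q = m_i\cdot 1 = m_i$ (using the transversality just established). Summing over the $\mu_i$ points on each good curve and over all good curves yields the claimed formula $\sum_i m_i\mult_{\gamma_i}(\rho-\rho(p))$. The only nontrivial point is the identification of the scheme-theoretic intersection multiplicity at a smooth crossing with $m_i$, which reduces to the standard fact that the multiplicity of an irreducible component of a complete intersection scheme equals the multiplicity assigned to that component by the primary decomposition, and becomes the usual transverse intersection number once one slices by an additional generic hypersurface $\rho^{-1}(y)$.
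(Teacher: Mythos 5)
Your proposal is correct and follows the same outline as the paper: the paper in fact offers only a brief heuristic justification (bad components cannot carry isolated zeros; good curves contribute $\mult_{\gamma_i}(\rho-\rho(p))$ solutions each seen with multiplicity $m_i$ via transversality of $\rho^{-1}(\e)$ with $\gamma_i$), and you supply essentially the same reasoning with more detail (Puiseux parametrization, simplicity of roots, the accounting of the scheme-theoretic multiplicity $m_i$ at a transverse crossing). The only sentence that is slightly loose is the parenthetical justification for $d\rho\neq0$ on $\gamma_i$ at $q$; what you really mean is that $(\rho\circ\gamma_i)'(t)=c\mu_i t^{\mu_i-1}(1+O(t))$ is nonzero for small $t\neq0$, which is the same fact stated more precisely.
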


\subsection{The set of non-isolated intersections}

Recall that $P=(P_1,\ldots,P_{n-1})$ and $R$ denote polynomials of
degree bounded by $d$.

\begin{Def}
  The set of non-isolated intersection of $P$ with respect to $R$,
  denoted by $\NI(P;R)$, is the set of all points $q\in\C^{n+m}$ such
  that $q$ belongs to a bad component of the set
  $\{P\rest{\Lambda_q}=0\}$ with respect to $R\rest{\Lambda_q}$.
\end{Def}

The union of the bad component on each particular integral manifold
$\Lambda_q$ is analytic, but generally not algebraic. However, the
following proposition shows that all of these sets together do form an
algebraic set.

\begin{Prop} \label{prop:ni-algebraic}
  The set $\NI(P;R)$ is algebraic. Moreover, it can be defined by
  equations of degrees not exceeding
  \begin{equation}
    d_\NI = \max[d_\IL,d_M(n,\delta,d,k)]  \qquad \text{where } k=\cN(m,n,\delta,d;0)
  \end{equation}
  where $d_\IL,d_M$ are as given
  in~\eqref{eq:gk-integral},~\eqref{eq:deg d_M} respectively.
\end{Prop}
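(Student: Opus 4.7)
The plan is to express $\NI(P;R)$ as the locus where the augmented system $F_q:=(P_1,\ldots,P_{n-1},R-R(q))$ fails to have $q$ as an isolated common zero on $\Lambda_q$, and then to encode this failure as a polynomial condition on $q$ via the multiplicity operator machinery of \secref{sec:multiplicity operators}.

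First I would establish the geometric equivalence: for $q\in\IL$, one has $q\in\NI(P;R)$ if and only if $q$ is a non-isolated common zero of $F_q$ on $\Lambda_q$. Indeed, a component of $\{P|_{\Lambda_q}=0\}$ of dimension $\ge 2$ remains positive-dimensional after intersecting with $\{R=R(q)\}$, and a $1$-dimensional component $\gamma$ on which $R$ is constant is entirely contained in $\{F_q=0\}$; conversely, any positive-dimensional germ of $F_q^{-1}(0)$ through $q$ must sit inside some component of $\{P|_{\Lambda_q}=0\}$ that is either higher-dimensional or a curve along which $R$ is then forced to equal the constant $R(q)$, i.e.\ a bad component in either case.

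Next, since $k=\cN(m,n,\delta,d;0)$ is by definition the maximal multiplicity of an isolated common zero of $n$ Noetherian functions of degree $\le d$, non-isolation of $F_q$ at $q$ is equivalent to $\mult(F_q)_q>k$. By Proposition~\ref{prop:mo-fundamental} this holds exactly when $\mo_q(F_q)=0$ for every multiplicity operator $\mo$ of order $k$. I would then verify that each such quantity $\mo_q(F_q)$ depends polynomially on the coordinates of $q\in\C^{n+m}$ with degree at most $d_M(n,\delta,d,k)$: applying $\mo$ to $P_i$ or $R$ viewed as elements of $\C_{n+m}$ via the Noetherian derivations produces polynomials in $\C_{n+m}$ of degree $\le d_M$ by~\eqref{eq:deg d_M}, while the substitution $R\mapsto R-R(q)$ only shifts the $0$-th jet by the scalar $R(q)$ whose contribution at the evaluation point $q$ vanishes identically ($R(q)-R(q)=0$), so no additional $q$-degree is introduced.

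Combining these polynomial conditions with the defining equations of $\IL$ (of degrees $\le d_\IL$, by Theorem~\ref{thm:gk-integral}) exhibits $\NI(P;R)$ as the common zero locus of polynomials of degree bounded by $d_\NI=\max[d_\IL,d_M(n,\delta,d,k)]$, establishing both algebraicity and the degree bound simultaneously. The main conceptual obstacle is the first step---characterizing ``bad components'' as precisely the non-isolated zeros of $F_q$, and then using the very definition of $\cN(m,n,\delta,d;0)$ to turn non-isolation into a uniform quantitative multiplicity bound that the multiplicity operators can detect. Once this equivalence is in place, the remaining algebraicity and degree bookkeeping are essentially mechanical applications of Proposition~\ref{prop:mo-fundamental} and the degree formula~\eqref{eq:deg d_M}.
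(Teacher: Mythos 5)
Your proposal is correct and follows essentially the same route as the paper's own proof: encode membership in $\NI(P;R)$ via integrability (degree $d_\IL$) together with non-isolation of the augmented system $(P,R-R(q))$, convert non-isolation into $\mult>k$ using the definition of $\cN(m,n,\delta,d;0)$, and detect this by vanishing of all order-$k$ multiplicity operators (Proposition~\ref{prop:mo-fundamental}), yielding polynomial conditions of degree $\le d_M(n,\delta,d,k)$. The extra detail you supply on the geometric equivalence and on why the shift $R\mapsto R-R(q)$ contributes no extra $q$-degree after evaluation at $q$ is correct and simply makes explicit two steps the paper leaves implicit.
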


\begin{proof}
  A point $q\in\C^{n+m}$ belongs to $\NI(P;R)$ if~\eqref{eq:noetherian-system} is
  integrable at the point, and the system of equations
  \begin{equation}
    P\rest{\Lambda_q}=(R-R(q))\rest{\Lambda_q}=0
  \end{equation}
  admits a non-isolated solution through $q$. We can write down equations
  of degree $d_\IL$ to guarantee integrability by Theorem~\ref{thm:gk-integral}.

  Since the maximal possible multiplicity of an isolated solution is
  bounded by $k$, a solution is non-isolated if and only if it has
  multiplicity greater than this number. By
  Proposition~\ref{prop:mo-fundamental} this is equivalent to the
  vanishing of $\mo(P,R-R(q))$ for every multiplicity operator $\mo$
  of order $k$. It remains to note that these expressions evaluate to
  polynomials (as functions of $q$) of the required degrees.
\end{proof}

The real analytic function $\dist(\cdot,\NI(P;R))$ will play a key role
in our arguments. We will refer to it as the \emph{critical distance}.

\subsection{The inductive step}

We will prove our main result, Theorem~\ref{thm:main}, by induction on
the dimension of the set $\NI(P;R)$. To facilitate this induction we
let $\cM(m,n,\delta,d;e)$ denote the maximum possible deformation
multiplicity at $p$ for any system~\eqref{eq:noetherian-system} with
parameters $m,n,\delta$ and any pair $X,\rho$ defined as in the beginning
of~\secref{sec:proof-structure} with $\deg P\le d$ and
$\deg R\le d$, and such that the dimension of $\NI(P;R)$ near $p$ is
bounded by $e$.

The case $e=0$ corresponds to systems where the intersection at $p$ is
in fact isolated. In this case the deformation multiplicity is given
by the usual multiplicity of $p$ as a solution of the equations
$P=R-R(p)=0$.  Thus $\cM(m,n,\delta,d;0)$ is bounded by
Theorem~\ref{thm:gk-mult}.

Theorem~\ref{thm:main} now follows by an easy induction from the
following.
\begin{Thm} \label{thm:NI-induction}
  The quantity $\cM(m,n,\delta,d;e)$ satisfies
  \begin{equation}
    \cM(m,n,\delta,d;e) \le \cM(m,n,\delta, d',e-1)
  \end{equation}
  where
  \begin{equation}
    d' = (A d_E+B)(k+1)+n+m,
  \end{equation}
  $A,B$ are defined in Proposition~\ref{prop:MRvsDist}, $d_E$ in 
  Proposition~\ref{prop:E-construction} and $k=\cN(m,n,\delta,d;0)$ as in 
  Proposition~\ref{prop:deflicity-preserving}. More explicit,
  $$
  d'\le \left(\max\{d,\delta\}\right)^{32(m+n)^4}(m+n)^{40(m+n)^5}.
  $$
\end{Thm}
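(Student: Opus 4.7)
The plan is to prove the inductive step by producing, for any pair $(P,R)$ with $\dim_p \NI(P;R) = e$, a perturbed pair $(\tilde P, \tilde R)$ of polynomials of degree at most $d'$ such that (i) the deflicity of $(\tilde P, \tilde R)$ at $p$ is at least the deflicity of $(P,R)$ and (ii) $\dim_p \NI(\tilde P;\tilde R) \le e-1$. The quantity $\cM(m,n,\delta,d';e-1)$ applied to the perturbed pair then bounds the original deflicity, which is exactly the statement of the theorem.

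First I would invoke Proposition~\ref{prop:E-construction} to produce a polynomial $E$ of degree $d_E$ that vanishes on $\NI(P;R)$ and enjoys a quantitative (\Lojas.-type) lower bound in terms of the critical distance $\dist(\cdot,\NI(P;R))$. Using $E$ I form the perturbation
\begin{equation*}
  \tilde P_i = P_i + E \cdot H_i^{k+1}, \qquad \tilde R = R + E \cdot H_0^{k+1},
\end{equation*}
where $k=\cN(m,n,\delta,d;0)$ is the isolated-multiplicity bound supplied by Theorem~\ref{thm:gk-mult}, and $H_0,\ldots,H_{n-1}$ are generic polynomials of degree $Ad_E+B$, with $A,B$ the constants coming from Proposition~\ref{prop:MRvsDist}. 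The exponent $k+1$ is dictated by Corollary~\ref{cor:mo-power-pert}, which is the ingredient that allows the Rouch\'e-type identity~\eqref{eq:mo-power-rouche} to be invoked on $k$-th multiplicity operators. Summing degrees gives $\deg \tilde P_i,\,\deg\tilde R \le (Ad_E+B)(k+1)+n+m$, the $n+m$ correction absorbing the degree contribution coming from evaluating multiplicity operators against the Noetherian chain, cf.~\eqref{eq:deg d_M}.

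Second, I would verify that the deflicity is preserved. On the locus $\{E\ne 0\}$ (that is, away from $\NI(P;R)$) Proposition~\ref{prop:MRvsDist} shows that the critical distance is comparable to the size of the $k$-th multiplicity operators of $(P,R-R(q))$; combining this with Corollary~\ref{cor:mo-power-pert} yields a cover of a punctured neighbourhood of $p$ by polydiscs on each of which the zero-counts of $\{P=0,R=y\}$ and $\{\tilde P=0,\tilde R=y\}$ agree. On $\NI(P;R)$ itself the perturbation vanishes identically, so the good curves of $(P,R)$ persist unchanged as good curves of $(\tilde P,\tilde R)$ and contribute identical terms to the sum in Proposition~\ref{prop:def-of-deflicity}. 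The precise matching of the two deflicities is the content of Proposition~\ref{prop:deflicity-preserving}.

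Finally, a generic choice of the $H_i$ ensures that $\NI(\tilde P;\tilde R)$ drops in dimension near $p$: since $E$ vanishes on $\NI(P;R)$, the new bad components lie inside $\NI(P;R)$, and a generic $(k+1)$-th power perturbation of degree $Ad_E+B$ intersects every $e$-dimensional bad component in positive codimension. The technical heart of the argument, and the place I expect the hardest work, is coordinating the two genericity requirements: the Rouch\'e-type zero preservation away from $\NI$ demands a uniform quantitative non-vanishing of multiplicity operators (hence a precise separation from $\NI$), while the dimension drop inside $\NI$ demands a transversality statement for the $H_i^{k+1}$ along a stratification of $\NI(P;R)$. Granting the three propositions cited above, the inductive hypothesis applied to $(\tilde P,\tilde R)$ yields Theorem~\ref{thm:NI-induction}, and the explicit estimate for $d'$ follows by substituting the bounds on $d_E$, $k$, $A$ and $B$.
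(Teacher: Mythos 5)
Your high-level skeleton is right: build a perturbed pair of degree $\le d'$ that preserves deflicity while shaving at least one dimension off $\NI$, then invoke $\cM(m,n,\delta,d';e-1)$ on the perturbation. But the way you set up $E$ inverts a requirement that the whole dimension-drop argument hinges on, and that breaks your proof.

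You write that Proposition~\ref{prop:E-construction} produces $E$ \emph{vanishing on} $\NI(P;R)$, and then you argue that ``since $E$ vanishes on $\NI(P;R)$\ldots a generic $(k+1)$-th power perturbation\ldots intersects every $e$-dimensional bad component in positive codimension.'' This cannot work: each bad component $Y\subset\Lambda_q$ is entirely contained in $\NI(P;R)$, so if $E\equiv 0$ on $\NI(P;R)$ then $E\cdot H_i^{k+1}\equiv 0$ on $Y$ for \emph{every} choice of $H_i$, hence $\tilde P_i\equiv P_i$ and $\tilde R\equiv R$ on $Y$, and $Y$ survives intact as a bad component of the perturbed system. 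Genericity of the $H_i$ buys nothing because the factor $E$ has already annihilated the perturbation on $Y$. In fact Proposition~\ref{prop:E-construction} asserts the \emph{opposite}: $E$ is built (via Lemma~\ref{lem:H-construction} and the sum $E=\sum_i H_i\prod_{j\ne i}Q_j$) so that it does \emph{not} vanish identically on any irreducible component $\NC_i$ of $\NI(P;R)$, while still having order along each good curve at least as large as the order of the critical distance. That non-vanishing on $\NC_i$ is exactly what makes $P'_1=P_1+Q_1(E')^{k+1}$ nonzero at generic points of $\NC_i$ (condition (B) in the paper's schematic proof), and this is the step your proposal is missing. Your reasoning correctly covers condition (A) (new $\NI$ inside the old one) but leaves condition (B) unsupported.

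A few secondary mismatches. The paper does not perturb $R$ at all; only the $P_j$ are modified, and $R$ is kept fixed so that the deflicity of the perturbation is still a deflicity ``with respect to $\rho$'' in the sense of Proposition~\ref{prop:def-of-deflicity}. Your degree bookkeeping is also off: in the paper the factor raised to the $(k+1)$-th power is $E'=E^A\ell^B$ (of degree $Ad_E+B$), while the outer multipliers $Q_j$ are generic of degree $n+m$ chosen so that the Sard-type Lemma~\ref{lem:sard-e-nz}/\ref{lem:sard-e-z} gives codimension $\ge n+m+1$; that is where the $n+m$ in $d'=(Ad_E+B)(k+1)+n+m$ comes from, not from ``absorbing a degree contribution from the Noetherian chain.'' Finally, the condition (A) containment $\NI(P';R)\subseteq\NI(P;R)$ is not obtained from $E$ vanishing on $\NI(P;R)$ (it doesn't) but from the Sard/genericity argument of Proposition~\ref{prop:generic-def}.
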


We now present a schematic proof of this statement assuming the
validity of Propositions~\ref{prop:deflicity-preserving},
\ref{prop:MRvsDist}, \ref{prop:generic-def}
and~\ref{prop:E-construction}. The reader is advised to review this
proof before reading the details of the aforementioned propositions in
order to gain perspective on the context in which they are used. The
propositions are presented in the three subsections
of~\secref{sec:good-perts} and in~\secref{sec:minorize-distance}
respectively, and each can be read independently of the others.

\begin{proof}
  Consider a system~\eqref{eq:noetherian-system} with parameters
  $(n,m,\delta)$, an integral manifold $\Lambda_p$, and a pair $P,R$
  of degree $d$ such that $\dim\NI(P;R)=e$. Let $\{\gamma_i\}$ denote
  the good curves of $P,R$ on $\Lambda_p$ and let $\{\NC_i\}$ denote
  the irreducible components of $\NI(P;R)$. We assume that
  $R\rest{\Lambda_p}\not\equiv\const$ (otherwise there is nothing to
  prove). We will establish the claim by constructing a new pair
  $P',R$ of degree $d'$ such that
  \begin{enumerate}
  \item[(i).] The deflicity of $P',R$ is no smaller than of $P,R$ on $\Lambda_p$.
  \item[(ii).] We have $\dim\NI(P';R)<\dim\NI(P;R)=e$.
  \end{enumerate}
  Proposition~\ref{prop:deflicity-preserving} establishes a condition
  under which a perturbation of $P$ does not decrease the deflicity.
  Roughly, the order of the perturbations along each good curve $\gamma_i$
  must be greater than a certain prescribed order $\nu_i$. Under this
  condition, we show by a Rouch\'e principle argument that the perturbation
  cannot decrease the number roots converging to $p$.

  Proposition~\ref{prop:MRvsDist} relates the orders $\nu_i$ to the
  critical distance. Namely, we give two explicit constants $A,B$ such
  that for any good curve $\gamma_i$ the order of the critical distance
  function on $\gamma_i$ is at least $\frac{\nu_i-B}{A}$. Thus, to
  satisfy the requirements of
  Proposition~\ref{prop:deflicity-preserving}, it essentially suffices
  to find a function $E$ minorizing the critical distance.
  
  The construction of $E$ is carried out in
  Proposition~\ref{prop:E-construction}, where we construct a
  polynomial $E\in\C_{n+m}$ of an explicitly bounded degree $d_E$
  which minorizes the critical distance along each good curve and does
  not vanish identically on any of the $\NC_i$. We choose a generic
  affine-linear functional $\ell\in(\C^{n+m})^*$ which vanishes at
  $p$, and does not vanish identically on any of the $\NC_i$ and
  define
  \begin{equation}
    E' = E^A \cdot \ell^B
  \end{equation}
  where $A,B$ are the coefficients given in
  Proposition~\ref{prop:MRvsDist}. Then $E'$ satisfies the asymptotic
  conditions of Proposition~\ref{prop:deflicity-preserving}.

  Our second goal is condition (ii). We achieve this condition
  in two steps. Namely, it would clearly suffice to construct a perturbation $P'$
  satisfying the following two conditions:
  \begin{enumerate}
  \item[(A).] $\NI(P';R)\subseteq\NI(P;R)$.
  \item[(B).] None of the components $\NC_i$ are contained in
    $\NI(P';R)$.
  \end{enumerate}
  We establish condition (A) by a Sard-type argument, essentially
  using the fact that the occurrence of a non-isolated intersection is
  a condition of infinite codimension. Specifically, if we choose
  $Q_1,\ldots,Q_{n-1}$ to be sufficiently generic polynomials of
  degree $n+m$ and define
  \begin{equation}
    P'_j = P_j+Q_j(E')^{k+1}
  \end{equation}
  then according to Proposition~\ref{prop:generic-def} condition (A)
  is satisfied. Moreover, $P'$ clearly satisfies the asymptotic
  conditions of Proposition~\ref{prop:deflicity-preserving} and
  hence condition (i).

  It remains to establish condition (B). Consider a component $\NC_i$.
  Recall that $E'$ does not vanish at generic points of $\NC_i$ by
  construction. And $Q_1$, being chosen to be sufficiently generic,
  certainly does not either. Since $P_1$ vanishes at every point of
  $\NC_i$ by definition, we see that $P'_1$ does not vanish at generic
  points of $\NC_i$, and hence condition (B) is satisfied. This
  concludes the proof.
\end{proof}

\begin{proof}[Proof of Theorem~\ref{thm:main}]
We claim that in notations of Theorem~\ref{thm:NI-induction},
\begin{equation}\label{eq:bound on induction}
d'\le \left(\max\{d,\delta\}\right)^{32(m+n)^4}(m+n)^{40(m+n)^5}.
\end{equation}

Indeed, rough estimate for $\cN(m,n,\delta,d;0)$ in Theorem~\ref{thm:gk-mult} 
gives 
\begin{equation}\label{eq:bound on gk mult}
\cN(m,n,\delta,d;0)< (\delta+d)^{8(m+n)^2}(m+n)^{8(m+n)^3}.
\end{equation}

Let denote the right hand side by $C$. We have $B,(k+1)\le C$ as defined in 
Proposition~\ref{prop:MRvsDist}. 
Then $d_M(n,\delta,d,B)< C^{n+1}(d+\delta)$ by \eqref{eq:deg d_M}, and 
$d_\IL< \delta^{2(m+n)}(m+n)^{6(m+n)}$, by \eqref{eq:gk-integral}. 
Therefore, according to Proposition~\ref{prop:ni-algebraic}, $d_\NI< 
C^{m+n}$ and $A< C^{(m+n)^2}$.

From \eqref{eq:d_H} we have $d_H< C^{m+n}$. Therefore, from \eqref{eq: deg 
d_E}, $d_E< C^{2(m+n)^2}$. Therefore 
$$
d'<\left(C^{3(m+n)^2}+C\right)C+m+n<C^{4(m+n)^2}\le 
\left(\max\{d,\delta\}\right)^{32(m+n)^4}(m+n)^{40(m+n)^5}.
$$

Now, applying \eqref{eq:bound on induction} at most $n$ times, we get
$$
\cM(m,n,\delta,d;e) \le \cM(m,n,\delta, \tilde{d},0), \quad
\tilde{d}\le \left(\max\{d,\delta\}\right)^{(m+n)^{8n}}(m+n)^{(m+n)^{20n}},
$$
so, by  \eqref{eq:bound on gk mult}, we get the required bound.
\end{proof}

\section{Good perturbations}
\label{sec:good-perts}

Let $P=(P_1,\ldots,P_{n-1})$ and $R$ be polynomials of degree bounded
by $d$, and let $\Lambda_p$ be the germ of an integral manifold
of~\eqref{eq:noetherian-system} at the point $p$. Let $\{\gamma_i\}$
denote the set of good curves of $P,R$ through the point $p$ with
associated multiplicities $m_i$. Let $\ell\in\(\C^n\)^*$ be a generic linear
functional on the $x$-plane which is non-vanishing on vectors tangent
to each $\gamma_i$ at $p$, and let $\cT=\ker\ell$. At generic point
$q$ of each $\gamma_i$, the multiplicity $m_i$ is equal to the
multiplicity of the isolated intersection $P=0$ with the
$(n-1)$-dimensional $\cT$-plane. In particular, it is bounded by
$k=\cN(m,n,\delta,d;0)$.

Let $M:=\mo_\cT$ be a generic $k$-th order multiplicity operator
through the $\cT$ plane. In particular, we require that $M$ does not
vanish identically on any of the $\gamma_i$ (which is certainly
possible by Proposition~\ref{prop:mo-fundamental}) and moreover that
the order of $M$ along each $\gamma_i$ is the minimal possible. It is
easy to verify that these conditions are satisfied by $\cT,M$ outside
a proper algebraic subset of the class of $(n-1)$-planes and
multiplicity operators of order $k$ (since having order greater than a
given value is given by the vanishing of certain algebraic expressions
depending on the coefficients of $\cT,M$).

\begin{Rem}
  In fact, since the multiplicity is taken through the
  $n-1$-dimensional plane $\cT$, and the restriction of $P$ to $\cT$
  may be though of as Noetherian function in the ambient space
  $\C^{n+m-1}$ cut out by $\ell=0$, we could have chosen
  $k=\cN(m,n-1,\delta,d;0)$. However, since this does not affect our
  overall estimates and complicates the notation we use the
  weaker bound above.
\end{Rem}

\subsection{Deflicity preserving perturbations}

Let $E$ and $Q_1,\ldots,Q_{n-1}$ be holomorphic functions on
$\Lambda_p$.

\begin{Prop} \label{prop:deflicity-preserving}
  Suppose that for every $\gamma_i$
  \begin{equation} \label{eq:E-growth-condition}
    \ord_{\gamma_i}E > \max[\ord_{\gamma_i}M, \ord_{\gamma_i}(R-R(p))]
  \end{equation}
  and let
  \begin{equation}
    P_j' = P_j+Q_jE^{k+1} \quad j=1,\ldots,n-1.
  \end{equation}
  Then the deflicity of $P',R$ is no smaller than the deflicity
  of $P,R$.
\end{Prop}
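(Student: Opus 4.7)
The plan is as follows. By Proposition \ref{prop:def-of-deflicity}, the deflicity of $(P, R)$ at $p$ equals $\sum_i m_i \mu_i$ with $\mu_i := \mult_{\gamma_i}(R - R(p))$. It suffices to exhibit, for every $y$ sufficiently close to $R(p)$, at least $\sum_i m_i \mu_i$ isolated zeros (with multiplicity) of the perturbed system $(P', R - y) = 0$ inside a neighborhood of $p$ that shrinks to $p$ as $y \to R(p)$. For each good curve $\gamma_i$ the fibre $R\rest{\gamma_i} = y$ consists of $\mu_i$ points $q_{i,j}(y) \in \gamma_i$ converging to $p$ at rate $|y - R(p)|^{1/\mu_i}$, each of which is an isolated zero of $(P, R - y)$ of multiplicity $m_i$ (the intersection $\{P=0\}\cap\{R=y\}$ being transversal to $\gamma_i$ at $q_{i,j}(y)$, with multiplicity equal to that of the isolated intersection of $\{P=0\}$ with the generic $\cT$-plane through $q_{i,j}(y)$). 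The zeros of the perturbed system will be located by Rouch\'e's principle on the product polydiscs
\[
D_{i,j}(y) := B_\cT\bigl(q_{i,j}(y), \tilde r\bigr) \times B_e\bigl(q_{i,j}(y), \rho\bigr), \qquad \C^n = \cT \oplus \langle e \rangle,
\]
with $\tilde r = \tilde r(y)$ and $\rho = \rho(y)$ to be tuned below.

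\textbf{Rouch\'e on $\partial D_{i,j}(y)$.} To verify $\|(P, R-y)\| > \|(QE^{k+1}, 0)\|$ on the boundary, we handle the two boundary pieces separately. On the $\cT$-part, apply Corollary \ref{cor:mo-power-pert} to $P\rest{\cT_{q_{i,j}(y)}}$ and $E\rest{\cT_{q_{i,j}(y)}}$ at the base point $q_{i,j}(y)$. Along $\gamma_i$ one has
\[
|E(q_{i,j}(y))| \asymp |y - R(p)|^{\ord_{\gamma_i} E/\mu_i}, \qquad |M(P)(q_{i,j}(y))| \asymp |y - R(p)|^{\ord_{\gamma_i} M/\mu_i},
\]
so the hypothesis $\ord_{\gamma_i} E > \ord_{\gamma_i} M$ makes the ratio of these tend to zero; taking $r := 2|E(q_{i,j}(y))|$ then satisfies $|E(q_{i,j}(y))| < r < B''_{n-1,k}|M(P)(q_{i,j}(y))|$ for $y$ sufficiently close to $R(p)$, and Corollary \ref{cor:mo-power-pert} returns $\tilde r \in (A''_{n-1,k} r, r)$ for which $\|P\| > \|E\|^{k+1}$ holds on the $\cT$-sphere of radius $\tilde r$ at $q_{i,j}(y)$. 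Continuity and compactness propagate the strict inequality to the $\cT$-planes over every $q' \in q_{i,j}(y) + B_e(0, \rho)$ once $\rho$ is comparable to $\tilde r$. On the $e$-part, nonconstancy of $R\rest{\gamma_i}$ together with the generic choice of $\ell$ makes $\partial_e R$ nonzero at generic points of $\gamma_i$, so choosing $\rho$ a sufficiently large multiple of $\tilde r$ yields $|R - y| \gtrsim \rho$ on $|\ell - \ell(q_{i,j}(y))| = \rho$ (the $\partial_\cT R \cdot z_\cT$ contribution being dominated by $\tilde r \ll \rho$), which dominates $\|QE^{k+1}\|$ after absorbing $\|Q\|$.

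\textbf{Disjointness and conclusion.} With $\tilde r, \rho$ both of order $|E(q_{i,j}(y))| \asymp |y - R(p)|^{\ord_{\gamma_i} E/\mu_i}$, the second half of the hypothesis $\ord_{\gamma_i} E > \ord_{\gamma_i}(R - R(p)) = \mu_i$ forces $\tilde r, \rho = o\bigl(|y - R(p)|^{1/\mu_i}\bigr)$. Since the $\mu_i$ points $q_{i,j}(y)$ on $\gamma_i$ are separated by $\sim |y - R(p)|^{1/\mu_i}$, the polydiscs $D_{i,j}(y)$ are pairwise disjoint for $y$ close to $R(p)$, while polydiscs across distinct $\gamma_i \ne \gamma_{i'}$ separate automatically since these curves meet only at $p$. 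Rouch\'e on each $D_{i,j}(y)$ then yields $\#\{(P', R - y) = 0\} \cap D_{i,j}(y) = m_i$, and summing over $(i, j)$ delivers the required total of $\sum_i m_i \mu_i$.

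The main technical obstacle is the simultaneous balancing of the three scales $\tilde r$, $\rho$, and $|y - R(p)|^{1/\mu_i}$; the two inequalities comprising the hypothesis $\ord_{\gamma_i} E > \max[\ord_{\gamma_i} M, \ord_{\gamma_i}(R - R(p))]$ are each responsible for one of these comparisons, so both are used in an essential way. A secondary subtlety worth verifying is that the $D_{i,j}(y)$ stay clear of the bad components of $\{P = 0\}$, on which $(P, R-y)$ has non-isolated zeros and naive Rouch\'e fails; this is arranged by the shrinkage $\tilde r, \rho \to 0$ together with the fact that the good curves $\gamma_i$ meet the bad components only at $p$.
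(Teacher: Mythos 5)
Your strategy is to fix a target value $y$ close to $R(p)$, locate the $\mu_i$ fibre points $q_{i,j}(y)$ on each good curve $\gamma_i$, and show by a Rouch\'e argument that each survives as an isolated root of $(P',R-y)=0$. This is a natural line of attack, and you correctly identify the roles of the two halves of the hypothesis: $\ord_{\gamma_i}E>\ord_{\gamma_i}M$ makes Corollary~\ref{cor:mo-power-pert} applicable at the base points, while $\ord_{\gamma_i}E>\ord_{\gamma_i}(R-R(p))$ controls the scale so that fibre points on the \emph{same} curve get disjoint polydiscs.

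There is, however, a genuine gap in the treatment of distinct good curves. The claim that polydiscs across $\gamma_i\neq\gamma_{i'}$ ``separate automatically since these curves meet only at $p$'' does not hold at the scales you are using. The polydisc $D_{i,j}(y)$ has $\cT$-radius $\tilde r\asymp|E(q_{i,j}(y))|\asymp s^{\ord_{\gamma_i}E}$ (with $s$ the transversal coordinate $\ell$), while two distinct good curves may be tangent to arbitrarily high order $a$ at $p$, so that the fibre points at the same $\ell$-value are only $\asymp s^a$ apart. The hypothesis bounds $\ord E$ from below by $\max[\ord M,\ord R]$ but not from above, and when the combined multiplicity $m_i+m_{i'}$ stays at most $k$, the tangency order $a$ is not controlled by $\ord M$ (only when $m_i+m_{i'}>k$ does Theorem~\ref{thm:mo-polydisc-zeros} force $a\le\ord M$). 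Consequently the case $a>\ord_{\gamma_i}E$ can occur; then the polydiscs around $q_{i,j}(y)$ and $q_{i',j'}(y)$ overlap, the Rouch\'e count on each already sees both roots, and summing the individual counts double-counts. The paper handles exactly this by first partitioning the pro-branches into \emph{clusters} via the relation $\ord_s\dist(\gamma_{ij}(s),\gamma_{i'j'}(s))>\max[\ord M,\ord R]$, performing the Rouch\'e argument once per cluster with a ball radius $s^{\nu_\alpha}$, $\nu_\alpha$ chosen strictly between $\max[\ord M,\ord R]$ and the minimum intra-cluster separation order (and below $\ord E$). Some version of that grouping is needed before the counts can be summed; without it your estimate breaks down precisely for highly tangent good curves.

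A secondary remark on route: the paper avoids the $n$-dimensional product-polydisc Rouch\'e altogether by applying the $(n-1)$-dimensional Corollary~\ref{cor:mo-power-pert} slice by slice in $\{\ell=s\}$ to $P$ alone, producing the pro-branches $\gamma'_{ij}$ of $\{P'=0\}$; the second inequality $\ord E>\ord(R-R(p))$ is then invoked only at the very end, to ensure the new pro-branches in $B'_\alpha$ satisfy $\ord_{\gamma'_{ij}}R=\ord_{\gamma_\alpha}R$, rather than entering a boundary estimate for $|R-y|$. This sidesteps the compactness/continuity propagation over the $e$-range, the control of $\partial_e R$ near $q_{i,j}(y)$ (which degenerates when $\mu_i>1$), and the question of whether $(P',R-y)$ has isolated zeros in $D_{i,j}(y)$ --- all of which your sketch leaves at the hand-waving level and which would need additional care if you pursue the fixed-$y$ approach.
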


\begin{proof}
  We assume for simplicity of the notation that $\ell(p)=R(p)=0$. We
  may also assume that $P,Q,E,R$ are all normalized to have norm
  bounded by $1$ in some fixed ball in $\Lambda_p$.

  Recall that $\ell$ is transversal to each $\gamma_i$ at $p$. We say
  that the \emph{pro-branches} of $\gamma_i$ with respect to $\ell$
  are the irreducible components $\gamma_{ij}$ of the set
  $\gamma_i\cap\ell^{-1}(\R_{\ge0})$. Each pro-branch is an
  irreducible germ of a real analytic curve. For $s\in(R_{\ge0},0)$
  there exists a unique point
  $\gamma_{ij}(s)\in\gamma_{ij}\cap\ell^{-1}(s)$, defining an analytic
  parametrization of $\gamma_{ij}$.

  From Proposition~\ref{prop:def-of-deflicity} and standard theory of
  holomorphic curves we see that the deflicity of $P,R$ is equal to
  \begin{equation} \label{eq:def-gamma-ij}
    \sum_{\gamma_i}\mult_{\gamma_i}R = \sum_{\gamma_{ij}} \ord_{\gamma_{ij}} R
  \end{equation}
  Here, if a curve $\gamma_i$ appears with multiplicity $m_i$ then we
  consider each of its pro-branches as appearing $m_i$ times in the
  set $\gamma_{ij}$. Similarly, if we let $\gamma'_i$ denote the good
  curves corresponding to $P',R$ and $\gamma'_{ij}$ their pro-branches
  (taking multiplicities into account), then the deflicity of $P',R$
  is equal to
  \begin{equation} \label{eq:def-gamma'-ij}
    \sum_{\gamma'_i}\mult_{\gamma'_i}R = \sum_{\gamma'_{ij}} \ord_{\gamma'_{ij}} R
  \end{equation}
  
  We will show that if $E$ satisfies the growth
  condition~\eqref{eq:E-growth-condition} then there is an injective
  map $\iota$ from the set $\{\gamma_{ij}\}$ to the set
  $\{\gamma'_{ij}\}$ such that
  \begin{equation} \label{eq:iota}
    \ord_{\gamma_{ij}}R=\ord_{\iota(\gamma_{ij})}R.
  \end{equation}
  Therefore the right-hand side of~\eqref{eq:def-gamma'-ij} is no
  smaller than the right-hand side of~\eqref{eq:def-gamma-ij}, and the
  conclusion of the proposition follows.

  Introduce an equivalence relation $\sim$ on $\{\gamma_{ij}\}$ by letting
  $\gamma_{ij}\sim\gamma_{i'j'}$ if and only if
  \begin{equation} \label{eq:cluster-def}
    \ord_s \dist(\gamma_{ij}(s),\gamma_{i'j'}(s)) > \max[\ord_{\gamma_{ij}}M,\ord_{\gamma_{ij}}R]
  \end{equation}
  In other words, two pro-branches are equivalent if and only if
  the distance between them is smaller by an order of magnitude
  than $M$ and $R$ (evaluated at one of them). It is easy to check
  that this is indeed an equivalence relation. We call the equivalence
  classes $C_\alpha$ of $\sim$ \emph{clusters}.
 
  Choose a representative $\gamma_\alpha$ for each cluster $C_\alpha$.
  By~\eqref{eq:cluster-def} we can choose an order $\nu_\alpha\in\Q$ such that
  \begin{equation} \label{eq:nu-cond1}
    \min_{\gamma_{ij}\sim\gamma_\alpha} [ \ord_s 
    \dist(\gamma_\alpha(s),\gamma_{ij}(s)) ]
    > \nu_\alpha > 
    \max[\ord_{\gamma_\alpha}M,\ord_{\gamma_\alpha}R],
  \end{equation}
  and by~\eqref{eq:E-growth-condition} we can also require that
  \begin{equation} \label{eq:nu-cond2}
    \ord_{\gamma_\alpha} E > \nu_\alpha.
  \end{equation}

  Consider the ball $B_\alpha(s)\subset\{\ell=s\}$ with center at $\gamma_\alpha(s)$
  and radius $s^{\nu_\alpha}$. By~\eqref{eq:nu-cond1} and the definition of $\sim$
  it follows that for $s$ sufficiently small, $B_\alpha(s)$ meets the
  pro-branches $\gamma_{ij}$ in the cluster $C_\alpha$ and only them.

  Given~\eqref{eq:nu-cond1} and~\eqref{eq:nu-cond2},
  Corollary~\ref{cor:mo-power-pert} and the
  subsequent~\eqref{eq:mo-power-rouche} apply with
  $r=s^{\nu_\alpha}/A''_{n,k}$ for $s$ sufficiently small. It follows
  that the number of zeros of $P'=0$ in the ball
  $B'_\alpha(s)\subset\{\ell=s\}$ with center at $\gamma_\alpha(s)$
  and radius $s^{\nu_\alpha}/A''_{n,k}$ is at least the size of the
  cluster $C_\alpha$.
  
  By~\eqref{eq:nu-cond1} the balls $B'_\alpha(s)$ are disjoint for
  different $\alpha$ and sufficiently small $s$. Consider now the
  pro-branches $\gamma'_{ij}$. We will say that $\gamma'_{ij}$
  \emph{lies in $B'_\alpha$} if $\gamma'_{ij}(s)\in B'_\alpha(s)$ for
  sufficiently small $s$. By the above, at least
  $\#C_\alpha$ of the pro-branches must lie in $B'_\alpha$. Let $\iota$ be an
  arbitrary injection from the pro-branches $\gamma_{ij}$ belonging to
  $C_\alpha$ to the pro-branches $\gamma'_{ij}$ lying in $B'_\alpha$.

  The construction will be finished if we prove that $\iota$
  satisfies~\eqref{eq:iota}. But this is clear, since
  by~\eqref{eq:nu-cond1} the radius of $B'_\alpha(s)$ is an order of
  magnitude smaller than $R$ on $\gamma_\alpha$ and therefore any
  pro-branch $\gamma'_{ij}$ lying in $B'_\alpha$ must satisfy
  \begin{equation}
    \ord_{\gamma'_{ij}}R = \ord_{\gamma_\alpha} R.
  \end{equation}
\end{proof}

\subsection{Minorizing the growth conditions by the critical distance}

Proposition~\ref{prop:deflicity-preserving} allows us to construct
a perturbation of $P,R$ which does not decrease the deflicity, given
a function $E$ which is small on the good curves
$\gamma_i$ compared to $R$ and $M$.

In this subsection we show that $R$ and $M$ are minorized by the
critical distance, and hence in order to apply
Proposition~\ref{prop:deflicity-preserving} it suffices to minorize
the critical distance. More precisely,

\begin{Prop} \label{prop:MRvsDist}
  For every good curve $\gamma_i$ we have
  \begin{equation}
    \max[\ord_{\gamma_i}M, \ord_{\gamma_i}(R-R(p))] \le  A \ord_{\gamma_i} \dist(\cdot,\NI(P,R-R(p))) + B
  \end{equation}
  where
  \begin{eqnarray*}
    A &:=& \max[d_\NI,d_M(n,\delta,d,B)]^{n+m} \\
    B &:=& \cN(m,n,\delta,d;0)
  \end{eqnarray*}
\end{Prop}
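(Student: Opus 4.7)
The plan is to bound $\ord_{\gamma_i} F$ separately for $F = R - R(p)$ and $F = M$, using an effective {\L}ojasiewicz-type comparison with the critical distance. The first step would be to verify that $F$ is of finite order at $p$ along $\gamma_i$: for $F = R - R(p)$ this follows from the definition of a good curve, and for $F = M$ from the generic choice of the multiplicity operator. So $N := \ord_{\gamma_i} F$ is a well-defined finite positive quantity, equal to the multiplicity of $p$ as a zero of $F|_{\gamma_i}$.

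Next, I would identify an algebraic subset $\NI_F \subseteq \NI$ on which $F$ vanishes, and apply an effective {\L}ojasiewicz inequality between $|F(q)|$ and $\dist(q, \NI_F)$. For $F = R - R(p)$: take $\NI_F$ to be the union of bad curves of $\NI$ through $p$, each of which lies in $\{R = R(p)\}$ because $R$ is constant on each bad curve and equals $R(p)$ at $p$. For $F = M$: take $\NI_F$ to be the union of higher-dimensional components of $\NI$ through $p$; at any point $q$ of such a component $Z$ with $\dim Z \ge 2$, the intersection $Z \cap \cT_q$ has dimension $\ge 1$, so $P$ restricted to $\cT_q$ has infinite multiplicity at $q$, and by Proposition~\ref{prop:mo-fundamental} every order-$k$ multiplicity operator vanishes there, in particular $M(q) = 0$. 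In both cases $\NI_F$ is algebraic, of degree $\le d_\NI$, and contained in $\NI$. An effective {\L}ojasiewicz inequality for polynomial varieties in $\C^{n+m}$, applied to $F$ (of degree $\le d_M$) and $\NI_F$, would then yield, on a neighborhood of $p$, a bound $|F(q)| \ge c \cdot \dist(q, \NI_F)^\mu$ with $\mu \le \max(d_\NI, d_M)^{n+m}$. Evaluating along $\gamma_i$ and using the inclusion $\NI_F \subseteq \NI$ (which gives $\dist(\cdot, \NI_F) \ge \dist(\cdot, \NI)$ pointwise, hence $\ord_{\gamma_i} \dist(\cdot, \NI_F) \le \ord_{\gamma_i} \dist(\cdot, \NI)$), I would conclude
\begin{equation*}
  \ord_{\gamma_i} F \le \mu \cdot \ord_{\gamma_i} \dist(\cdot, \NI_F) \le A \cdot \ord_{\gamma_i} \dist(\cdot, \NI),
\end{equation*}
with the additive constant $B = \cN(m, n, \delta, d; 0)$ absorbing any residual error, which corresponds to the isolated-multiplicity contribution at $p$ of the $n$-equation system $(P_1, \ldots, P_{n-1}, F)$ bounded by Theorem~\ref{thm:gk-mult} via a generic perturbation to isolate $p$.

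The hardest part would be establishing the effective {\L}ojasiewicz inequality above in the form required, with the explicit exponent $\mu \le \max(d_\NI, d_M)^{n+m}$, and with the guarantee that the inequality applies along $\gamma_i$ near $p$ (i.e., that the closest point of $\{F = 0\}$ to $\gamma_i(t)$ lies in $\NI_F$ for small $t$, rather than on some other irreducible branch of $\{F = 0\}$ through $p$). The exponent $n+m$ reflects the ambient dimension via Bezout-type counts in the ideals defining $\NI_F$ and $F$; I would obtain it either through effective Nullstellensatz arguments carefully accounting for the possibly singular or reducible structure of $\NI_F$, or through a direct Puiseux-type analysis of $F$ and $\dist(\cdot, \NI_F)$ along $\gamma_i$ combined with a bound on the analytic complexity of the good curve $\gamma_i$.
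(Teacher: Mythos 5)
The high-level split into two cases ($F = R-R(p)$ and $F = M$) and the appeal to Koll\'ar-type effective {\L}ojasiewicz inequalities both appear in the paper's proof, so the framing is not unreasonable. However, your central technical step is invalid, and the gap is precisely where the paper's actual idea lives.

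You claim an inequality of the form $\abs{F(q)} \ge c\,\dist(q,\NI_F)^\mu$ where $\NI_F$ is a \emph{proper algebraic subset of} $\{F=0\}$ (bad curves or higher-dimensional components of $\NI$ through $p$, while $Z(F)$ is for instance the entire hypersurface $\{R=R(p)\}$). Such an inequality is simply false: {\L}ojasiewicz compares $\abs{F}$ to the distance to the \emph{full} zero set $Z(F)$, and $\abs{F}$ vanishes on $Z(F)\setminus\NI_F$ while $\dist(\cdot,\NI_F)$ does not. Concretely, take $F=xy$ and $\NI_F=\{0\}$: at $(0,1)$ one has $F=0$ but $\dist=1$. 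You flag the ``closest point lies in $\NI_F$'' issue as a difficulty to resolve, but in fact it is fatal and, for a generic good curve $\gamma_i$ transversal to $\{R=R(p)\}$ at $p$, the nearest point of $\{F=0\}$ to $\gamma_i(t)$ is at distance $O(t)$ while $\NI_F$ may be much farther, so no repair along these lines can work. The extra degree of freedom the paper exploits is that it does \emph{not} apply {\L}ojasiewicz to $F$ itself. Instead it forms a finite system $\{G_i\}$ consisting of the defining equations of the integrability locus $\IL$ together with all order-$k$ multiplicity operators $\mo(P,R-R(p))$ (respectively $\mo_{\cT_j}(P)$ for $M$), where $k=\cN(m,n,\delta,d;0)$. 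By Proposition~\ref{prop:mo-fundamental} the \emph{common} zero locus of this system \emph{is} contained in $\NI(P;R)$, so Koll\'ar's inequality legitimately gives $\max_i\abs{G_i}\ge C\,\dist(\cdot,\NI)^A$. Along $\gamma_i$ the $\IL$-generators vanish identically, so the maximum is attained by some $\mo(P,R-R(p))$; finally Theorem~\ref{thm:mo-curve-growth} relates $\ord_{\gamma_i}\mo(P,R-R(p))$ to $\ord_{\gamma_i}(R-R(p))-k$, which is where the additive constant $B=k$ actually comes from. Your explanation of $B$ as an ``isolated-multiplicity contribution'' via a generic perturbation is not a real argument and does not match this mechanism. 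In short: the idea of using multiplicity operators to build a polynomial ideal with zero locus inside $\NI$ is the crux of the proposition, and it is absent from your proposal.
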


\begin{proof}
  We start with the estimate for the function $R-R(p)$. Let
  $k=\cN(m,n,\delta,d;0)$ and let $I_R$ generated by the polynomials
  $\mo(P,R-R(p))$ (where $\mo$ ranges over all multiplicity operators
  of order $k$) and the polynomials defining the
  integrability locus $\IL$ provided by Theorem~\ref{thm:gk-integral}.
  We denote the set of all of these generators by $\{G_i\}$.

  Arguing in the same manner as in the proof of
  Proposition~\ref{prop:ni-algebraic}, we see that the zero locus
  of $I_R$ is contained in $\NI(P;R)$ (in fact, it consists of those
  points in $\NI(P;R)$ where $R=R(p)$).

  By the effective \Lojas. inequality \cite{Kollar:Lojas}, there exists a
  constant $C>0$ such that
  \begin{equation}
    \max_i \abs{G_i(\cdot)} \ge C \dist(\cdot,Z(I_R))^A \ge
      C \dist(\cdot,\NI(P;R))^A.
  \end{equation}
  Let $\gamma_i$ be a good curve. Then $\gamma_i\subset\IL$ and the
  polynomials defining $\IL$ vanish identically on it. Therefore along
  $\gamma_i$ the maximum must be attained for one of the generators
  given by the multiplicity operator $\mo$, and hence
  \begin{equation}
    \ord_{\gamma_i} \mo(P,R-R(p)) \le  A \ord_{\gamma_i} \dist(\cdot,\NI(P;R)).
  \end{equation}
  Since $P_1,\ldots,P_{n-1}$ vanish identically on $\gamma_i$, the
  conclusion now follows by application of
  Theorem~\ref{thm:mo-curve-growth}.

  Proceeding now to the estimate for $M$, recall that we have
  $M=\mo_\cT(P)$ for a generic multiplicity operator $\mo_\cT$ where
  the direction $\cT$ was chosen to be transversal to the good curves
  on $\Lambda_p$. However, in principle $\cT$ may be parallel to good
  curves on other integral manifolds (i.e., its translate may contain
  them).

  To avoid this problem we let $\cT_1,\ldots,\cT_n$ be a tuple of
  linearly independent $(n-1)$-dimensional subspaces of the $x$-plane,
  each satisfying the same genericity condition as $\cT$. For any
  integral manifold $\Lambda$ and any point $q\in\{P\rest\Lambda=0\}$,
  if $q\not\in\NI(P;R)$ then the zero locus of $P\rest\Lambda$ near
  $q$ must be a curve, and at least one of $\cT_1,\ldots,\cT_n$ must
  not be parallel to this curve. Then there exists a multiplicity
  operator of order $k$ in the $\cT_i$ direction which is not
  vanishing at $q$.

  Let $I_M$ denote the ideal generated by the polynomials defining the
  integrability locus and all multiplicity operators of order $k$
  through any of spaces $\cT_1,\ldots,\cT_n$ of $P$. By the above, the
  zero locus of $I_M$ is contained in $\NI(P;R)$. Moreover, the
  integrability conditions vanish identically on $\gamma_i$, and all
  other generators of $I_M$ have orders no smaller than the order of
  $\mo_\cT(P)$ (since our original choice of $\cT$ and $\mo_\cT$ was
  generic). We can now complete the proof in a manner analogous to the
  argument we used for $I_R$. We leave the details for the reader.
\end{proof}

\begin{Rem}\label{rem:why-not-gk}
  Proposition~\ref{prop:MRvsDist} is the only step where we
  essentially need the assumption that the deformations under
  consideration are Noetherian with respect to the deformation
  parameter. Namely, in order to apply the effective \Lojas.
  inequality we require that the entire deformation space be
  algebraic, rather than the weaker condition that each fiber
  $R^{-1}(\e)$ be separately algebraic as provided in the
  Gabrielov-Khovanskii conjecture.
\end{Rem}

\subsection{Sard-type claim for generic perturbations}

We are interested in applying
Proposition~\ref{prop:deflicity-preserving} in order to produce
a perturbation of $P,R$ which does not decrease deflicity,
and which reduces the set of non-isolated intersections $\NI(P;R)$.
The first step is to show that our perturbation does not create
new non-isolated intersections. We will show that for a sufficiently
generic choice of the coefficients $Q_j$ this will be the case.

Let $E$ be a Noetherian function.  Let $\beta\in\N$ and denote by $\cP_\beta$
the space of polynomials of degree bounded by $\beta$ in $n$
variables. Finally let $(Q_1,\ldots,Q_n)\in\cP_\beta^n$, and set
$P'_i=P_i+Q_iE$ for $i=1,\ldots,n$.

\begin{Prop} \label{prop:generic-def} Assume
  $R\rest{\Lambda_p}\not\equiv\const$. There exists a neighborhood
  $U_p\subset\C^{n+m}$ of $p$ such that for a generic tuple
  $Q=(Q_1,\ldots,Q_n)\in\cP_{n+m}^n$ (outside a proper algebraic set),
  \begin{equation} \label{eq:generic-pert}
    \NI(P';R)\cap U_p \subseteq \NI(P;R)\cap U_p
  \end{equation}
  where $P'_i$ is defined as above.
\end{Prop}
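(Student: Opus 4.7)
The approach is a Sard-style dimension count on the incidence variety
\[
\cW = \{(q,Q)\in U_p\times\cP_{n+m}^n : q\in\NI(P_Q;R)\}.
\]
Applying Proposition~\ref{prop:ni-algebraic} uniformly to the family $P_Q$ (whose components have degree bounded by $d+\deg E+(n+m)$) shows $\cW$ is algebraic in $U_p\times\cP_{n+m}^n$, with defining equations depending polynomially on $Q$. The claim~\eqref{eq:generic-pert} reduces to showing that $\pi_2(\cW')$ is contained in a proper algebraic subvariety of $\cP_{n+m}^n$, where
\[
\cW':=\cW\setminus\bigl(\NI(P;R)\times\cP_{n+m}^n\bigr).
\]

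The hinge observation is that $\cW\cap\{Q=0\}=(\NI(P;R)\cap U_p)\times\{0\}$, since $P_Q|_{Q=0}=P$. Consequently, for every $q_0\in U_p\setminus\NI(P;R)$, the fiber
\[
\cW_{q_0}:=\{Q\in\cP_{n+m}^n:q_0\in\NI(P_Q;R)\}
\]
is a proper algebraic subset of $\cP_{n+m}^n$ (it omits $Q=0$). To finish the Sard argument one needs the stronger statement that, for $q_0$ in a Zariski-open subset of $U_p\setminus\NI(P;R)$, $\codim\cW_{q_0}>\dim U_p=n+m$. Granting this, applying the fiber-dimension theorem to $\pi_1\colon\cW'\to U_p\setminus\NI(P;R)$ yields $\dim\cW'<\dim\cP_{n+m}^n$; hence $\pi_2(\cW')$ is a proper subvariety and a generic $Q$ avoids it.

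The main technical obstacle is establishing the codimension bound $\codim\cW_{q_0}>n+m$, the quantitative form of the heuristic that ``non-isolated intersection at a fixed point is an infinite-codimension condition.'' Unwinding the proof of Proposition~\ref{prop:ni-algebraic}, $\cW_{q_0}$ is cut out (on the integrability locus) by the vanishing at $q_0$ of the multiplicity operators $\mo(P_Q,R-R(q_0))$ of order $k$ for appropriate $k$, viewed as polynomial conditions on the coefficients of $Q$; there are on the order of $\dim J_{n,k}$ such conditions. The essential leverage for independence is that $\deg Q_j=n+m$ affords enough freedom to realize any $(n+m)$-jet of $Q_jE$ at $q_0$, at least on the Zariski-dense subset where $E(q_0)\neq 0$ (the complementary case $E\equiv 0$ on $\Lambda_p$ being trivial, since then $P_Q\equiv P$ and $\cW'=\emptyset$). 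A linear-algebraic computation then propagates this freedom to the nonlinear multiplicity-operator expressions, yielding codimension at least $\dim J_{n,k}\gg n+m$, as required.
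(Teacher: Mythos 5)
Your high-level strategy --- an incidence variety $\cW$, a fiberwise codimension bound at each $q_0\notin\NI(P;R)$, and a projection/dimension argument --- is essentially the one the paper uses. The gap is in the codimension bound itself, which is the whole content of the proposition. You handle only the case $E(q_0)\neq 0$ and dismiss the alternative as ``the complementary case $E\equiv 0$ on $\Lambda_p$.'' But the complement of $\{E(q_0)\neq 0\}$ in $U_p$ is the hypersurface $\{E=0\}$, not the empty set or the trivial case $E\equiv 0$; it has codimension one, and it is exactly where the good curves through $p$ live, since $E$ is constructed to be small on them. Over that stratum you cannot realize arbitrary jets of $Q_jE$ at $q_0$ (because $E(q_0)=0$), so your jet-freedom argument gives nothing there, and a fiber-dimension count over a codimension-one base still needs a nontrivial codimension bound on the fibers. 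The paper treats this case with a separate lemma (Lemma~\ref{lem:sard-e-z}), using the hypothesis $q\notin\NI(P;R)$ to ensure the ambient zero set is a curve $\gamma$ near $q$ and the regularity of the sequence to guarantee $P_l$ does not vanish identically on $\gamma$, so the condition $EQ_l\equiv -P_l$ on a component is never satisfied when $E\equiv 0$ on that component.

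Your argument in the case $E(q_0)\neq 0$ is also more asserted than proved. You claim a ``linear-algebraic computation'' shows the vanishing of the order-$k$ multiplicity operators at $q_0$ imposes roughly $\dim J_{n,k}$ independent conditions on $Q$; these are nonlinear polynomial conditions in the coefficients of $Q$, and their independence is not at all obvious. The paper does not try to count independent jet conditions. Instead it runs an induction on $l$ (Lemma~\ref{lem:sard-e-nz}): given that $Z_{l-1}$ already has pure codimension $l$, adding the equation $P'_l=0$ fails to drop the dimension only if $P'_l$ vanishes identically on some irreducible component $Z'_{l-1}$; when $E(q)\neq 0$ that is the affine-linear condition $Q_l\equiv -P_l/E$ on $Z'_{l-1}$, which has codimension at least $\dim\cP_{\beta}\rest{Z'_{l-1}}\geq\beta+1$ in $\cP_\beta$. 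This yields the clean bound $\beta+1=n+m+1$ --- just barely exceeding $\dim U_p$, which is all the projection needs --- without any jet-independence claim. You would need to fill both holes (the $E(q_0)=0$ stratum and the independence of the multiplicity-operator conditions) for your argument to go through, and the paper's inductive, linear-condition formulation sidesteps both.
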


\begin{proof}
  Fix a Euclidean ball $U_p$ centered around $p$ such that $R$ is not
  constant on any integral manifold of \eqref{eq:noetherian-system} in
  $U_p$. Then for every point in $q\in U_p\cap\IL$ either
  Lemma~\ref{lem:sard-e-nz} or Lemma~\ref{lem:sard-e-z} below is
  applicable, depending on whether $E(q)=0$, with parameters
  $\beta=n+m$ and $l=n$. In both cases it follows that there exists a
  set $B(q)\subset\cP_{n+m}^n$ of codimension $n+m+1$, such that if
  $Q\not\in B(q)$ then $q\not\in\NI(P;R)$ implies $q\not\in\NI(P';R)$.
  Moreover, as noted in the proof of Lemmas~\ref{lem:sard-e-nz}
  and~\ref{lem:sard-e-z}, the graph of the relation $Q\in B(q)$ in
  each lemma is algebraic in $q$, so that the set
  \begin{equation}
    G = \{(Q,q)\in\cP_{n+m}^n\times U_p : Q\in B(q) \},
  \end{equation}
  is real algebraic constructible. Since $\dim U_p=n+m$ it follows
  that the projection of $G$ to $\cP_{n+m}^n$ has codimension at least
  $1$. Any $Q$ outside this projection will
  satisfy~\eqref{eq:generic-pert}.
\end{proof}

It remains to state and prove the following two lemmas.

\begin{Lem} \label{lem:sard-e-nz}
  Suppose that $q\in\IL, E(q)\neq0$ and suppose further that
  $R\rest{\Lambda_q}\not\equiv R(q)$. Let $0\le l\le n$.

  For $(Q_1,\ldots,Q_l)$ outside a set $B_l$ of codimension $\beta+1$
  in $\cP_\beta^l$, the set
  \begin{equation}
    Z_l := \{ P'_1=\cdots=P'_l=0, R=R(q)\}\cap\Lambda_q
  \end{equation}
  is empty or has pure codimension $l+1$ (in a neighborhood of $q$).
\end{Lem}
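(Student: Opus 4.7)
The proof proceeds by induction on $l$. The base case $l=0$ is immediate from the hypothesis $R\rest{\Lambda_q}\not\equiv R(q)$: the set $Z_0=\{R=R(q)\}\cap\Lambda_q$ is either empty or of pure codimension $1$ near $q$, and $B_0$ is taken to be the empty subset of $\cP_\beta^0$.

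For the inductive step, fix $(Q_1,\ldots,Q_{l-1})\notin B_{l-1}$. By the inductive hypothesis, $Z_{l-1}$ is either empty (in which case $Z_l$ is also empty and nothing further is required) or of pure codimension $l$ near $q$. In the latter case, decompose $Z_{l-1}$, in a sufficiently small neighborhood of $q$, into its finitely many irreducible analytic components $C_1,\ldots,C_s$, each of dimension $n-l$. The intersection $Z_l=Z_{l-1}\cap\{P'_l=0\}$ has pure codimension $l+1$ (or is empty) unless $P'_l=P_l+Q_l E$ vanishes identically on some $C_j$; and since $E$ is non-vanishing near $q$, this exceptional configuration is equivalent to $Q_l|_{C_j}\equiv -(P_l/E)|_{C_j}$ as holomorphic functions on $C_j$.

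The core estimate is that for each $C_j$ of positive dimension the restriction map $r_j:\cP_\beta\to\cO_{C_j,q}$, $Q\mapsto Q|_{C_j}$, has image of dimension at least $\beta+1$. To see this, choose a linear functional $\ell$ on the $x$-coordinate space $\C^n$ that is non-constant on $C_j$; this is possible since $\dim C_j=n-l\ge 1$. Then the polynomials $1,\ell,\ell^2,\ldots,\ell^\beta$ restrict to $\beta+1$ linearly independent holomorphic functions on $C_j$. Consequently the preimage under $r_j$ of the specific analytic function $-(P_l/E)|_{C_j}$ is either empty or an affine subspace of $\cP_\beta$ of codimension at least $\beta+1$. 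Taking the union over the finitely many $C_j$ and combining with the cylinder $B_{l-1}\times\cP_\beta$ yields $B_l\subset\cP_\beta^l$ of the required codimension.

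The main obstacle is the extremal case $l=n$, where the $C_j$ have dimension zero and the polynomial-restriction argument degenerates to a single linear condition on $Q_l$; here one exploits that the condition $q\in Z_{n-1}$ itself already imposes $n-1$ independent linear constraints on $(Q_1,\ldots,Q_{n-1})$, so that after reorganising the incidence structure along $q$ the bad set in $\cP_\beta^n$ recovers the required codimension. Separately, the algebraic dependence of $B_l$ on $q$, invoked in the proof of Proposition~\ref{prop:generic-def}, follows because each vanishing condition on $C_j$ can be encoded algebraically via multiplicity operators applied to $P,R$ together with the integrability equations defining $\IL$.
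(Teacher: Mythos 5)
Your induction and the core dimension estimate agree with the paper's proof: both proceed by the same projection $\pi\:\cP_\beta^l\to\cP_\beta^{l-1}$, decompose $Z_{l-1}$ into its finitely many irreducible components, and observe that identical vanishing of $P'_l=P_l+Q_lE$ on a component $Z'_{l-1}$ (equivalently $Q_l\equiv -P_l/E$ there, since $E\neq0$ in a neighborhood of $q$) is an affine-linear condition on $Q_l$ of codimension at least $\dim\(\cP_\beta\rest{Z'_{l-1}}\)\ge\beta+1$, the last inequality following because a linear form that is non-constant on a positive-dimensional component yields linearly independent restrictions $1,\ell,\ldots,\ell^\beta$. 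The paper takes $\ell$ to be a coordinate $x_j$; your use of a generic linear functional is cosmetic, and your remark on the algebraic dependence of $B_l$ on $q$ matches the paper's (which encodes $B_l$ via multiplicity operators applied to the tuples $\left(F_i^l\right)$ together with the integrability equations).

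The final paragraph on $l=n$, however, does not hold up. You correctly observe that when $\dim Z_{n-1}=0$ the restriction argument degenerates to a single linear condition on $Q_n$. But the proposed repair --- that ``the condition $q\in Z_{n-1}$ already imposes $n-1$ independent linear constraints on $(Q_1,\ldots,Q_{n-1})$'' --- is not available: nothing in the definition of $Z_n$ requires $q$ itself to lie in $Z_{n-1}$, only that $Z_n$ be nonempty in a neighborhood of $q$. For $(Q_1,\ldots,Q_{n-1})\notin B_{n-1}$ the set $Z_{n-1}$ is a finite set of points $y$ which vary with the $Q_i$, and for each such $y$ the bad locus in $Q_n$ is a single hyperplane; the union over the moving $y$ still has fiberwise codimension $1$, not $\beta+1$, so your reorganisation does not recover the claimed bound. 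It is worth noting that the paper's own proof also only justifies $\dim\(\cP_\beta\rest{Z'_{l-1}}\)\ge\beta+1$ when $Z'_{l-1}$ is positive-dimensional, i.e.\ $l\le n-1$; since the setup supplies only $n-1$ polynomials $P_1,\ldots,P_{n-1}$, the range $l\le n-1$ is what is actually used, and the reference to $l=n$ in the proof of Proposition~\ref{prop:generic-def} appears to be an indexing slip in the paper rather than a case your argument needs to cover. So you were right to flag the thin spot, but the fix you sketch does not work; the honest resolution is that the lemma is only asserted (and only needed) for $l\le n-1$.
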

\begin{proof}
  Let $B_l$ be the set of $Q_i$ violating the condition. It is defined
  by the conditions $\mo_q(F^l_1,\ldots,F^l_n)=0$ for all multiplicity
  operators $\mo$ of every order $k$, where
  $\left(F^l_i\right)_{i=1}^n$ are $n$-tuples consisting of
  $P'_1\rest{\Lambda_q},\cdots,P'_l\rest{\Lambda_q},
  R\rest{\Lambda_q}-R(q)$ and $n-l-1$ generic linear functions
  vanishing at $q$. These expressions are polynomial in the
  coefficients of $Q_1,\ldots,Q_l$, so the set $B_l$ is algebraic and
  its dimension is well defined. We note that the conditions are
  algebraic with respect to $q$ as well.

  We prove the claim by induction. The case $l=0$ corresponds
  precisely to our assumption  $R\rest{\Lambda_q}\not\equiv R(q)$.
  
  Suppose that the claim is proved for $l-1$. Consider the projection
  $\pi:\cP_\beta^l\to\cP_\beta^{l-1}$ forgetting the last coordinate.
  The set $\pi^{-1}(B_{l-1})$ has codimension at least $\beta+1$ by
  induction. The claim will follow if we show that the fiber of each
  point outside $B_{l-1}$ intersects $B_l$ in codimension at least $\beta+1$.

  Let $(Q_1,\ldots,Q_{l-1})\not\in B_{l-1}$. Then $Z_{l-1}$ is either
  empty or has pure codimension $l$. If it is empty, there is nothing
  to prove. Otherwise $(Q_1,\ldots,Q_l)$ will belong to the fiber if
  and only if $P'_l$ vanishes identically on some irreducible
  component of $Z_{l-1}$. Since there are finitely many such
  components, it will suffice to check that identical vanishing on
  each of them has codimension at least $\beta+1$. Let $Z'_{l-1}$ be one
  such component.

  Since $E(q)\neq0$, we have $P'_l\rest{Z'_{l-1}}\equiv0$ if and only if
  $Q_l\equiv - P_l/E$ identically on $Z'_{l-1}$. This is an affine-linear
  condition on $Q_l$ of codimension at least $\dim \cP_\beta\rest{Z'_{l-1}}$. It remains
  only to note that this dimension is at least $\beta+1$: for instance
  if $x_j$ is a coordinate not identically vanishing on $Z'_{l-1}$ then
  clearly $1,x_j,\ldots,x_j^\beta$ are linearly independent as functions
  defined on $Z'_{l-1}$.
\end{proof}

\begin{Lem} \label{lem:sard-e-z}
  Suppose that $q\in\IL,q\not\in\NI(P;R)$ and $E(q)=0$. Let $0\le l\le n$.

  For $(Q_1,\ldots,Q_l)$ outside a set $B_l$ of codimension $\beta+1$
  in $\cP_\beta^l$, the set
  \begin{equation}
    Z_l := \{ P'_1=\cdots=P'_l=P_{l+1}=\cdots=P_n=0, R=R(q)\}\cap\Lambda_q
  \end{equation}
  is zero-dimensional in a neighborhood of $q$ (that is, contains
  only, possibly, $q$).
\end{Lem}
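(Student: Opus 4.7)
The plan is to induct on $l$, following the scheme of Lemma~\ref{lem:sard-e-nz}. Since $E(q)=0$ we have $P'_i(q)=P_i(q)$ for each $i$, so unless $q\in\{P_1=\cdots=P_n=0\}$ the set $Z_l$ does not contain $q$ and is empty in a small enough neighborhood of $q$ for any choice of $Q_1,\ldots,Q_l$. I may therefore assume $q\in\{P=0\}$. The base case $l=0$ then follows from $q\notin\NI(P;R)$: every irreducible component of $\{P=0\}\cap\Lambda_q$ through $q$ is a good curve on which $R$ is non-constant, so intersecting with $\{R=R(q)\}$ leaves only $q$ in a sufficiently small neighborhood.

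For the inductive step I consider the projection $\pi:\cP_\beta^l\to\cP_\beta^{l-1}$ forgetting the last coordinate. By induction $\pi^{-1}(B_{l-1})$ has codimension $\geq\beta+1$ in $\cP_\beta^l$, so it suffices to show that for each fixed $(Q_1,\ldots,Q_{l-1})\notin B_{l-1}$ the fiber of bad $Q_l\in\cP_\beta$ has codimension $\geq\beta+1$. To analyze this fiber I introduce the intermediate analytic germ
\[
  W := \{P'_1=\cdots=P'_{l-1}=P_{l+1}=\cdots=P_n=0,\ R=R(q)\}\cap\Lambda_q
\]
at $q$, so that $Z_l=W\cap\{P'_l=0\}$ while $Z_{l-1}=W\cap\{P_l=0\}$ is $0$-dimensional by induction. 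Since $Z_{l-1}$ contains $q$ and is $0$-dimensional, Krull's principal ideal theorem rules out any irreducible component of $W$ through $q$ of dimension $\geq 2$ (it would meet $\{P_l=0\}$ in dimension $\geq 1$ through $q$), as well as any $1$-dimensional component on which $P_l$ vanishes identically.

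A positive-dimensional germ of $Z_l$ through $q$ must therefore be an irreducible $1$-dimensional component $C$ of $W$ through $q$ on which $P'_l=P_l+Q_lE$ vanishes identically. If $E|_C\equiv 0$ this forces $P_l|_C\equiv 0$, contradicting the previous paragraph; hence $E|_C\not\equiv 0$, and the condition $P'_l|_C\equiv 0$ becomes the affine-linear equation $Q_l|_C\equiv-(P_l/E)|_C$ in the local ring of $C$ at $q$. Its codimension in $\cP_\beta$ equals the rank of the restriction map $\cP_\beta\to\cO(C)$ (and is infinite if the prescribed target does not lie in the image); since $C$ is $1$-dimensional, some coordinate $x_j$ is non-constant on $C$, making $1,x_j,\ldots,x_j^\beta$ linearly independent restrictions, so the codimension is $\geq\beta+1$. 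Taking the finite union over such components $C$ preserves this bound, completing the induction.

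The main obstacle is to verify that the resulting exceptional set $B_l$ is algebraic (constructible) in the coefficients of $Q_1,\ldots,Q_l$ and in $q$, as required by the Sard-type dimension count in Proposition~\ref{prop:generic-def}. The condition of admitting a positive-dimensional local component at $q$ for a polynomial system can be encoded, as in the proof of Proposition~\ref{prop:ni-algebraic}, by the vanishing of multiplicity operators of sufficiently high order applied to the defining equations, together with the algebraic conditions cutting out $\Lambda_q$; this gives $B_l$ the required algebraic structure, in line with the observation already used for $B_l$ in the proof of Lemma~\ref{lem:sard-e-nz}.
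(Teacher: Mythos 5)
Your proposal is correct and follows essentially the same inductive scheme as the paper's proof: induct on $l$, consider the intermediate curve $W$ (the paper calls it $\gamma$) obtained by omitting $P_l$ from the defining system of $Z_{l-1}$, observe that $P'_l$ vanishing identically on a $1$-dimensional component is an affine-linear condition on $Q_l$ of codimension $\ge\beta+1$, and handle the case $E\equiv 0$ on the component by noting $P_l$ cannot vanish there since $Z_{l-1}$ is zero-dimensional. Your version is slightly more detailed in two places the paper leaves implicit — the initial reduction to $q\in\{P=0\}$ in place of the paper's short ``empty $Z_{l-1}$'' case, and the Krull-type justification that $W$ has no components of dimension $\ge 2$ through $q$ — but these are elaborations of the same argument rather than a different route.
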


\begin{proof}
  Let $B_l$ be the set of $Q_i$ violating the condition. We can check
  that $B_l$ is algebraic as in the proof of
  Lemma~\ref{lem:sard-e-nz}.

  We prove the claim by induction. The case $l=0$ corresponds
  precisely to our assumption that $q\not\in\NI(P;R)$.
  
  Suppose that the claim is proved for $l-1$. Consider the projection
  $\pi:\cP_\beta^l\to\cP_\beta^{l-1}$ forgetting the last coordinate.
  The set $\pi^{-1}(B_{l-1})$ has codimension at least $\beta+1$ by
  induction. The claim will follow if we show that the fiber of each
  point outside $B_{l-1}$ intersects $B_l$ in codimension at least
  $\beta+1$.

  Let $(Q_1,\ldots,Q_{l-1})\not\in B_{l-1}$. Then the set $Z_{l-1}$ is
  zero dimensional (in a neighborhood of $q$). If it is in fact empty,
  then one of the equations defining it is non-vanishing at $q$. In
  this case, since $E(q)=0$ by assumption, $Z_l$ is empty as well.

  We therefore must consider the case that $Z_{l-1}=\{q\}$ (in a
  neighborhood of $q$). In this case, the equations
  \begin{equation}
    \{P'_1=\cdots=P'_{l-1}=P_{l+1}=\cdots=P_n=0, R=R(q)\}\cap\Lambda_q
  \end{equation}
  define a curve $\gamma\subset\Lambda_q$. Thus $(Q_1,\ldots,Q_l)$
  will belong to $B_l$ if and only if $P'_l$ vanishes identically on
  an irreducible component of this curve. Since there are finitely
  many such components, it will suffice to check that identical
  vanishing on each of them has codimension at least $\beta+1$. Let
  $\gamma'$ be one such component.

  We have $P'_l\rest{\gamma'}\equiv0$ if and only if $EQ_l\equiv - P_l$
  identically on $\gamma'$. This is an affine-linear condition on
  $Q_l$ of codimension at least $\dim \cP_\beta\rest{\gamma'}$: if
  $E\not\equiv0$ on $\gamma'$ then this is clear, and otherwise the
  condition is never satisfied because $S_{l-1}$ is a regular sequence
  and hence $P_l$ does not vanish identically on $\gamma$. The
  conclusion now follows as in the proof of Lemma~\ref{lem:sard-e-nz}.
\end{proof}

\section{Minorizing the critical distance}
\label{sec:minorize-distance}

Let $P=(P_1,\ldots,P_{n-1})$ and $R$ be polynomials of degree bounded
by $d$, and let $\Lambda_p$ be the germ of an integral manifold
of~\eqref{eq:noetherian-system} at the point $p$. Let $\{\gamma_i\}$
denote the set of good curves of $P,R$ through the point $p$ with
associated multiplicities $m_i$.

Our goal in this section is to construct a Noetherian function $E$ of
bounded degree such that $E$ minorizes the critical distance on good curves, 
and does not vanish identically on any of the top-dimensional components of
$\NI(P;R)$. The main step in the construction is the following Lemma.

We introduce some notations to facilitate our proof. If $\cT$ is a
linear subspace of the $x$-coordinates, then for every point $q\in\IL$
we will denote by $\cT_q\subset\Lambda_q$ the integral submanifold
through $q$ of the sub-distribution of~\eqref{eq:noetherian-system}
corresponding to $\cT$. Similarly $B_\cT(q,r)\subset\cT_q$ will denote
the ball of radius $r$ around $q$ in $\cT_q$.

The $x$-plane provides natural coordinates on the integral manifolds
of~\eqref{eq:noetherian-system} in a neighborhood $U_p$ of $p$. In particular
for any two integral manifolds $\Lambda_{q_1},\Lambda_{q_2}\subset U_p$ we
have a map $\tau^{q_1}_{q_2}:\Lambda_{q_1}\to\Lambda_{q_2}$ mapping each point in
$\Lambda_{q_1}$ to the point with the same $x$-coordinates in $\Lambda_{q_2}$.
If we choose $U_p$ small enough, then by the analytic dependence of flows
on initial conditions, for every $q\in U_p$ we have
\begin{equation} \label{eq:tau-dist}
  \dist(q,\tau^{q_1}_{q_2}q) \le 2 \dist(q_1,q_2).
\end{equation}

\begin{Lem} \label{lem:H-construction}
  Let $\NC$ be an irreducible component of $\NI(P;R)$. There exists
  a polynomial $H\in\C_{n+m}$ such that
  \begin{enumerate}
  \item For every good curve $\gamma$ we have
    \begin{equation}
      \ord_\gamma H \ge \ord_\gamma \dist(\cdot,\NC)
    \end{equation}
  \item $H$ does not vanish identically on $\NC$.
  \item The degree of $H$ is bounded by
    \begin{equation}\label{eq:d_H}
      d_H := (k+1)d_M(n,\delta,d,k) \qquad \text{where } k=\cN(m,n,\delta,d;0)
    \end{equation}
  \end{enumerate}
\end{Lem}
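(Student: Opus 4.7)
The plan is to construct $H$ as a sum of a polynomial that vanishes identically on $\NC$ (handling condition (1) cheaply) and a power of a generic linear form (handling condition (2)). A natural candidate is
\[
H \;=\; P_1 \,+\, \ell^{d_H},
\]
where $\ell\in(\C^{n+m})^*$ is a generic affine-linear functional vanishing at $p$ but not identically on $\NC$, and $d_H=(k+1)d_M(n,\delta,d,k)$.

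Conditions (2) and (3) are then automatic. Since $\NC\subset\NI(P;R)\subset\{P_1=0\}$, we have $P_1|_\NC\equiv 0$, so $H|_\NC=\ell^{d_H}|_\NC\not\equiv 0$ by our generic choice of $\ell$; this gives (2). The degree of $H$ is bounded by $\max(\deg P_1,d_H)\le d_H$ because $\deg P_1\le d\le d_M\le d_H$; this gives (3). For condition (1), along any good curve $\gamma$ we have $P_1\rest\gamma\equiv 0$, hence $\ord_\gamma P_1=\infty$, and therefore
\[
\ord_\gamma H \;=\; \ord_\gamma \ell^{d_H} \;=\; d_H\cdot\ord_\gamma\ell \;\ge\; d_H.
\]
So (1) reduces to the uniform Lojasiewicz-type inequality
\[
d_H \;\ge\; \ord_\gamma \dist(\cdot,\NC) \qquad\text{for every good curve }\gamma.
\]

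The main obstacle is proving this bound within the stated degree. The plan is to combine three ingredients: (a) by Proposition~\ref{prop:ni-algebraic}, $\NC$ (as an irreducible component of $\NI(P;R)$) sits inside the zero locus of polynomials $G_\mo(q)=\mo(P,R-R(q))(q)$ of degree $\le d_M$, and effective Lojasiewicz (e.g. Kollar's) yields $\dist(\cdot,\NC)^\alpha\le C\max_i|G_i(\cdot)|$ with an exponent $\alpha$ controlled polynomially in $d_M$; (b) Theorem~\ref{thm:mo-curve-growth} together with the fact that $P\rest\gamma\equiv 0$ and $R\rest\gamma\not\equiv\const$ bounds $\ord_\gamma G_\mo$ above by $\ord_\gamma(R-R(p))+k$; (c) the $\tau$-map $\tau^{q_1}_{q_2}\colon\Lambda_{q_1}\to\Lambda_{q_2}$ between nearby integral manifolds recalled at the beginning of~\secref{sec:minorize-distance} transports bad components of $\{P=0\}$ through generic points of $\NC$ back to $\Lambda_p$, which controls the contact order of $\gamma$ with $\NC$ in the ambient space. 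Combining (a)--(c) should deliver $\ord_\gamma\dist(\cdot,\NC)\le (k+1)d_M$, explaining precisely the factor of $(k+1)$ in the degree bound and completing the argument. The hardest step is (c): transferring the Lipschitz/Lojasiewicz estimate from $\NC\cap\Lambda_p$ (an analytic, not algebraic, slice) to the ambient algebraic set $\NC$, for which the $\tau$-map is essential.
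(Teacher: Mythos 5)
Your proposal misses the central difficulty of the lemma, and the fix you sketch has a structural error that would not close the gap.

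The construction $H=P_1+\ell^{d_H}$ satisfies (2) and (3), and your reduction of (1) is correct as far as it goes: since $P_1\rest\gamma\equiv 0$, you get $\ord_\gamma H=d_H\cdot\ord_\gamma\ell$, generically exactly $d_H$. But this means your proof hinges on a \emph{uniform constant} bound $\ord_\gamma\dist(\cdot,\NC)\le d_H$ for every good curve $\gamma$, and this is not what the lemma secretly asserts. The paper's own proof does not establish any uniform bound on $\ord_\gamma\dist(\cdot,\NC)$; it instead constructs $H=M^{\lambda+1}$, where $M=\mo[\lambda]_\cT(\Phi)$ is a multiplicity operator of a sub-tuple $\Phi$ of $P$ in a generic transversal direction $\cT$. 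The crucial feature is that $\ord_\gamma H=(\lambda+1)\ord_\gamma M$ is \emph{not} a constant — it varies with $\gamma$ and can be arbitrarily large, exactly compensating for the possibly high order of tangency between the transcendental curve $\gamma$ and the algebraic set $\NC$. The delicate part of the paper's argument (the two-stage analytic continuation of a zero of $\Phi$ inside $\NC$ along the leaves, using Lemma~\ref{lem:continutation}, the $\tau$-map, Rouch\'e, and Theorems~\ref{thm:mo-polydisc-zeros}–\ref{thm:mo-sphere-growth}) is precisely what establishes the comparison $\ord_\gamma\dist(\cdot,\NC)\le(\lambda+1)\ord_\gamma M$. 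Trading this for a constant right-hand side changes the nature of the statement.

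Your sketched ingredients (a)--(c) would not supply the missing bound even if carried out. In (b) you invoke Theorem~\ref{thm:mo-curve-growth} to bound $\ord_\gamma G_\mo$ from \emph{above}, but that theorem only gives the \emph{lower} bound $\ord_\gamma\mo(F)\ge\min_i\ord_\gamma F_i-k$; with $P\rest\gamma\equiv 0$ and $F=(P,R-R(p))$, this yields $\ord_\gamma G_\mo\ge\ord_\gamma(R-R(p))-k$, the opposite direction. In (a), the effective \Lojas. estimate $\dist(\cdot,\NC)^\alpha\le C\max_i|G_i|$ (assuming you have equations cutting out exactly $\NC$, which is itself an extra step) translates along $\gamma$ into $\alpha\,\ord_\gamma\dist\ge\min_i\ord_\gamma G_i$, again a \emph{lower} bound on $\ord_\gamma\dist$ rather than the upper bound you need. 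In short, all the estimates available run in the wrong direction for a constant bound on $\ord_\gamma\dist(\cdot,\NC)$, and I do not believe such a bound holds uniformly. You should look at the paper's actual construction: the whole point of choosing $H$ to be (a power of) a multiplicity operator is that its order along $\gamma$ self-adjusts to match the tangency of $\gamma$ with $\NC$, while its degree stays bounded by $d_M$.
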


\begin{proof}
  For any $q\in\NC$ the set $\NI(P;R)\cap\Lambda_q$ consists of those
  components of $\{P=0\}\cap\Lambda_q$ which have dimension greater
  than $1$, or where $R$ is constant. In particular,
  $\NC\cap\Lambda_q$ is a union of such components. Assume now that
  $q'\in\NC$ is generic. Then locally near $q'$ we will have
  $\NC\cap\Lambda_{q'}=\{P=0\}\cap\Lambda_{q'}$. Thus if
  $\codim_{\Lambda_{q'}}(\NC\cap\Lambda_{q'})=l$, then letting
  $\Phi=(P_1,\ldots,P_l)$ (up to a reordering of $P_j$), we have
  $\NC\cap\Lambda_{q'} = \{\Phi=0\}\cap\Lambda_{q'}$
 
  Let now $\cT$ be a generic $l$-dimensional subspace of the $x$
  coordinates. Denote by $\lambda$ the multiplicity of the isolated
  zero $\{\Phi\rest{\cT_{q'}}=0\}$. In particular, $\lambda \le k$. By
  Proposition~\ref{prop:mo-fundamental} there exists a multiplicity
  operator $\mo[\lambda]$ of order $\lambda$ such that $\mo[\lambda]_{\cT,q'}(\Phi)\neq0$. Let
  $M=\mo[\lambda]_\cT(\Phi)$.

  \begin{Claim} \label{claim:M-NC} At any point $q\in\NC$ we have
    $\mult_q \Phi\rest{\cT_q} \ge \lambda$. Moreover, if $M(q)\neq0$ then
    \begin{enumerate}
      \item $\mult_q \Phi\rest{\cT_q} = \lambda$.
      \item $\NC\cap\Lambda_q = \{\Phi=0\}\cap\Lambda_q$ in a neighborhood of $q$.
    \end{enumerate}
  \end{Claim}
  \begin{proof}[Proof of the claim]
    The first claim $\mult_q \Phi\rest{\cT_q} \ge \lambda$ holds
    because $\lambda$ was chosen as the generic (and hence minimal)
    multiplicity of a zero of $\Phi\rest{\cT_{q'}}$ at a point of
    $\NC$. Assume now that $M(q)\neq0$. Then
    $\mult_q \Phi\rest{\cT_q}$ cannot exceed $\lambda$ by
    Proposition~\ref{prop:mo-fundamental}. In particular it is finite,
    so the set $\{\Phi=0\}\cap\Lambda_q$ has codimension $l$ near $q$.
    Since $l$ was chosen to be the generic (hence maximal) codimension
    of $\NC$ intersected with any integral manifold, the codimension
    of $\NC\cap\Lambda_q$ is at most $l$. Since
    $\NC\cap\Lambda_q\subset\{\Phi=0\}\cap\Lambda_q$ we see that
    $\NC\cap\Lambda_q$ must in fact be a union of irreducible
    components of $\{\Phi=0\}\cap\Lambda_q$. Suppose toward
    contradiction that this set has another component $\cC$ through
    $q$.

    Since $M(q)\neq0$, the set $\{\Phi=0\}\cap\Lambda_q$ has an
    isolated intersection with $\cT_q$ at $q$. Consider generic
    $q'\in\Lambda_q$ arbitrarily close to $q$. Then $\cT_{q'}$ will
    meet $\cC$ at some point $q_1$ close to $q$, and the set
    $\NC\cap\Lambda_q$ at some other point $q_2$ close to $q$. Both
    points $q_1,q_2$ correspond to zeros of $\Phi\rest{\cT_{q'}}$ and
    by the first part of the claim
    $\mult_{q_2} \Phi\rest{\cT_{q_2}}\ge\lambda$. As $q'\to q$ both
    $q_1,q_2\to q$ and hence $\mult_q \Phi\rest{\cT_q}>\lambda$
    contradicting what was already proved.
  \end{proof}

  Let $\gamma$ be any good curve, and denote by $\gamma(t)$ a
  pro-branch with $\gamma(0)=p$. Since $\{\Phi=0\}\cap\Lambda_p$
  contains $\gamma$ it does not equal $\NC\cap\Lambda_p$ around $p$,
  and Claim~\ref{claim:M-NC} implies that $M(p)=0$. Since
  $M(\gamma(t))$ is analytic we can fix $t_0>0$ such that its modulus
  is monotone for $t\le t_0$. We claim that
  \begin{equation} \label{eq:MvCD}
    \ord_\gamma \dist(\cdot,\NC) \le (\lambda+1) \ord_\gamma M .
  \end{equation}
  Suppose to the contrary that for some $\e>0$ and for arbitrarily
  small $t$ there exist points $y(t)\in\NC$ such that
  \begin{equation} \label{eq:rho-vs-Mg}
    \rho(t) < |M(\gamma(t))|^{\lambda+1+\e}, \qquad \rho(t):=\dist(\gamma(t),y(t)).
  \end{equation}
  Then we will produce a sequence of points in $\NC$ converging to
  $\gamma(t_0)$, which is impossible since $\NC$ is closed and
  $\gamma$ is a good curve. More specifically, we analytically
  continue the point $y(t)$ to a curve in $\NC\cap\Lambda_{y(t)}$
  using (two applications of) Lemma~\ref{lem:continutation} and show
  that the endpoint of this curve converges to $\gamma(t_0)$ as
  $t\to0$.

  \begin{figure}
    \includegraphics[width=1\textwidth]{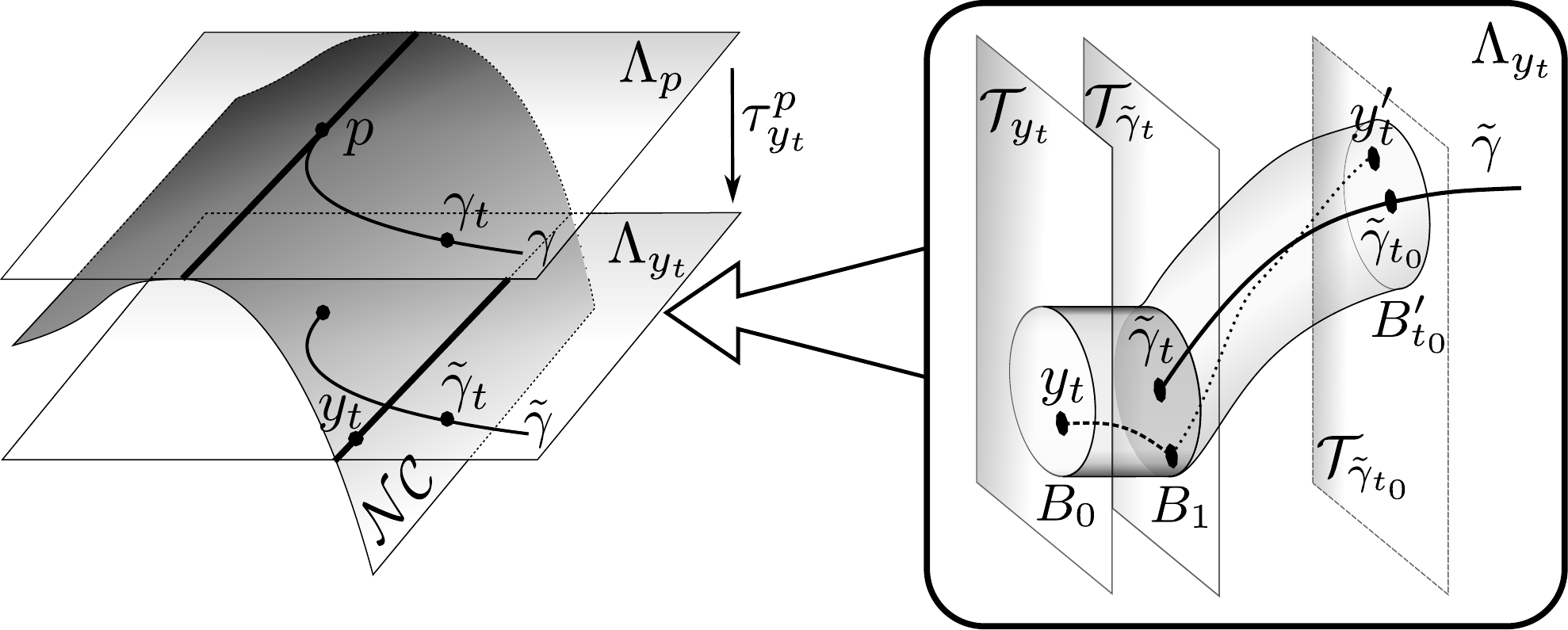}
    \caption[short]{ On the left, the entire ambient space; On the
      right, the leaf $\Lambda_{y(t)}$. The two dotted lines
      correspond to the two steps of analytic continuation. In the
      interest of space we render $\tau^p_{y(t)}(X)$ as $\tilde X$ and
      $X(t)$ as $X_t$.}
  \end{figure}

  In what follows whenever we use asymptotic class notation
  ($O,o,\Omega,\Theta$) with complex-valued functions we implicitly
  interpret it as applying to their modulus. We stress that the
  asymptotic constants are understood to be independent of $t,s$. Since
  $M$ is holomorphic in $U_p$ it is Lipschitz there, and it follows
  that for any two points $p_1,p_2\in U_p$ we have
  \begin{equation}\label{eq:m-lip}
    M(p_1)-M(p_2) = O(\dist(p_1,p_2))
  \end{equation}
  and similarly from $\Phi$. For instance, from~\eqref{eq:rho-vs-Mg}
  it follows that
  \begin{equation}\label{eq:rho-vs-My}
    \rho(t) = O(M(y(t))^{\lambda+1+\e}).
  \end{equation}
  We can thus choose positive $r(t)\in\R_+$ satisfying
  \begin{eqnarray}
    r(t) &=& o(M(y(t))), \label{eq:r-v-My} \\
    \rho(t) &=& o(r^\lambda(t)M(y(t))) \label{eq:r-v-rho-my}.
  \end{eqnarray}

  Consider the family of balls $B_s=B_\cT(c_s,r(t))\subset\Lambda_{y(t)}$
  connecting $B_0$ and $B_1$,
  \begin{equation}
    B_0=B_\cT(y(t),r(t)), \qquad B_1=B_\cT(\tau^p_{y(t)}\gamma(t),r(t))
  \end{equation}
  by a linear motion of their centers $c_s$ in the $x$-coordinates.
  Denote $\Phi_s:=\Phi\rest B_s$. Note that by~\eqref{eq:tau-dist} and
  the triangle inequality we have
  \begin{equation}\label{eq:dist-gamma-t-c}
    \dist(\gamma(t),c_s)=O(\rho(t)).
  \end{equation}
  We claim that Lemma~\ref{lem:continutation} applies to $\Phi_s$.
  Indeed,
  \begin{enumerate}
  \item[(i)] $\Phi_s$ admits at most $\lambda$ zeros for every $s\in[0,1]$.
    Indeed, by~\eqref{eq:dist-gamma-t-c} we see that
    $\dist(c_s,y(t))=O(\rho(t))$. Then~\eqref{eq:m-lip}
    and~\eqref{eq:rho-vs-My} give $M(c_s)=\Theta(M(y(t)))$. On the
    other hand, if $\Phi_s$ has more than $\lambda$ zeros in $B_s$ then by
    Theorem~\ref{thm:mo-polydisc-zeros} we have $M(c_s)=O(r(t))$. This
    contradicts~\eqref{eq:r-v-My}.
  \item[(ii)] $\Phi_s$ admits at least one zero for every $s\in[0,1]$.
    Indeed, by Theorem~\ref{thm:mo-sphere-growth} there exists a
    radius $A_{n,\lambda}r(t)<\tilde r(t)<r(t)$ such that the minimum of
    $\norm{\Phi_0}$ over $\partial B_\cT(y(t),\tilde r(t))$ is
    $\Omega(M(y(t)) \tilde r^\lambda(t))$, which by~\eqref{eq:r-v-rho-my} is
    asymptotically larger than $\rho(t)$. On the other hand,
    by~\eqref{eq:tau-dist} we may view $\Phi_s$ as a perturbation of
    size $O(\rho(t))$ of $\Phi_0$. By the Rouch\'e principle, such a
    perturbation does not change the number of zeros of $\Phi_0$ in
    $B_\cT(y(t),\tilde r(t))$. Since $y(t)$ is such a zero, $\Phi_s$
    must have a zero as well.
  \item[(iii)] The zero $y(t)$ of $\Phi_0$ has multiplicity $\lambda$ and does not
    bifurcate for small $s$. Indeed, $y(t)\in\NC$ and $M(y(t))\neq0$
    by construction, so by Claim~\ref{claim:M-NC} it must be a root of
    multiplicity $\lambda$ of $\Phi_0$. Moreover,
    $\NC\cap\Lambda_{y(t)} = \{\Phi=0\}\cap\Lambda_{y(t)}$ locally
    near $y(t)$. It follows that $y(t)$ cannot bifurcate for small
    values of $s$: it must remain an element of
    $\NC\cap\Lambda_{y(t)}$, and $\lambda$ is the minimal possible
    multiplicity for such a root.
  \end{enumerate}
  By Lemma~\ref{lem:continutation} we conclude that $\Phi_s$ has a
  zero $y_s(t)$ of multiplicity $\lambda$. Moreover by (iii) above
  $y_s(t)\in\NC\cap\Lambda_{y(t)}$ for small $s$. By analyticity the
  same must hold for every $s\in[0,1]$. Thus $y_1(t)$ is a
  zero of $\Phi_1$ and $y_1(t)\in\NC\cap\Lambda_{y(t)}$.

  Now consider the family of balls
  $B'_s=B_\cT(c'_s,r(t))\subset\Lambda_{y(t)}$ for $s\in[t,t_0]$ where
  \begin{equation}
    c'_s = \tau^p_{y(t)}\gamma(s).
  \end{equation}
  Note that $B'_t=B_1$. As before we set $\Phi'_s:=\Phi\rest {B'_s}$ and
  claim that Lemma~\ref{lem:continutation} applies to $\Phi'_s$.
  Indeed,
  \begin{enumerate}
  \item[(i')] $\Phi'_s$ admits at most $\lambda$ zeros for every $s\in[0,1]$.
    Indeed, by~\eqref{eq:tau-dist} we see that
    $\dist(c'_s,\gamma(s))=O(\rho(t))$. By~\eqref{eq:m-lip} we have
    \begin{equation} \label{eq:Mcps-v-My}
      M(c'_s) = M(\gamma(s))+O(\rho(t)) = \Omega(M(\gamma(t)))+O(\rho(t))=\Omega(M(y(t)))
    \end{equation}
    where we used the monotonicity of $M\rest\gamma$ in the second
    equality. On the other hand, if $\Phi_s$ has more than $\lambda$ zeros
    in $B'_s$ then by Theorem~\ref{thm:mo-polydisc-zeros} we have
    $M(c'_s)=O(r(t))$. This contradicts~\eqref{eq:r-v-My}.
  \item[(ii')] $\Phi'_s$ admits at least one zero for every $s\in[0,1]$.
    Indeed, let $\tilde B_s:=B_\cT(\gamma(s),r(t))$ and
    $\tilde\Phi_s=\Phi\rest{\tilde B_s}$. By~\eqref{eq:tau-dist} we
    may view $\tilde\Phi_s$ as a perturbation of size $O(\rho(t))$ of
    $\Phi_s$. Arguing as in the previous item (ii) and
    using~\eqref{eq:Mcps-v-My}, we see that this perturbation does not
    change the number of zeros in an appropriately chosen ball around
    the center. Since the center $\gamma(s)$ is a zero of
    $\tilde\Phi_s$, it follows that $\Phi_s$ must have a zero as well.
  \item[(iii')] The zero $y_1(t)$ of $\Phi'_t$ has multiplicity $\lambda$ and does
    not bifurcate for small $s$. Indeed, $\dist(y(t),y_1(t))=O(r(t))$
    and it follows using~\eqref{eq:r-v-My} that
    $M(y_1(t))=\Theta(M(y(t)))$. Since the latter is non-zero by
    construction we conclude that $M(y_1(t))\neq0$. Moreover
    $y_1(t)\in\NC$ by construction. The proof is now concluded in the
    same way as the previous item (iii).
  \end{enumerate}

  We thus apply Lemma~\ref{lem:continutation} and conclude in the same
  way as before that $\Phi'_{t_0}$ has a zero $y'(t)$, and moreover
  that $y'(t)\in\NC\cap B'_{t_0}$. As $t$ tends to zero the center of
  $B'_{t_0}$ tends to $\gamma(t_0)$ and its radius tends to zero,
  hence $y'(t)$ tends to $\gamma(t_0)$. As $y'(t)\in\NC$ we obtain the
  desired contradiction. Therefore~\eqref{eq:MvCD} is proved, and
  taking $H=M^{\lambda+1}$ concludes the proof.
\end{proof}

The proof of Lemma~\ref{lem:H-construction} will be completed once we prove
the following lemma.

\begin{Lem} \label{lem:continutation} Let $U\subset\C^l$ be an open
  domain, and $\Phi_s:U\to\C^l$ be an analytic family of holomorphic
  mappings $s\in[0,1]$. Let $\lambda\in\N$ and assume that
  \begin{enumerate}
  \item[(i)] $\Phi_s$ has at most at most $\lambda$ zeros in $U$, counting
    multiplicities, for all $s$.
  \item[(ii)] $\Phi_s$ has at least one zero in $U$ for all $s$.
  \item[(iii)] $\Phi_0$ has a zero $y_0$ of multiplicity $\lambda$ in $U$, while
    lies on a germ of a curve $y_s$ of zeros of multiplicity $\lambda$ of
    $\Phi_s$ (i.e. $y_0$ doesn't bifurcate into several zeros for
    small values of $s$).
  \end{enumerate}
  Then $y_s$ can be analytically extended to a curve of zeros of
  multiplicity $\lambda$ of $\Phi$ lying in $U$ for all $s\in[0,1]$.
\end{Lem}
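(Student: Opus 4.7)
The plan is to establish the extension by a connectedness argument on the parameter interval. Let $T\subseteq[0,1]$ be the set of parameters $s^*$ such that the germ $y_s$ extends analytically on $[0,s^*]$ as a curve of multiplicity-$\lambda$ zeros of $\Phi_s$ lying in $U$. By hypothesis (iii) $T$ contains a right-neighborhood of $0$, so it suffices to show that $T$ is both relatively open and relatively closed in $[0,1]$, whence $T=[0,1]$ by connectedness.

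For closedness, let $s^*:=\sup T$ and choose $s_n\in T$ with $s_n\nearrow s^*$. By (i) and the multiplicity-$\lambda$ property, $y_{s_n}$ saturates the $\lambda$-bound on total multiplicity and is therefore the unique zero of $\Phi_{s_n}$ in $U$. If $(y_{s_n})$ were to escape to $\partial U$, then by Hurwitz's theorem any zero of $\Phi_{s^*}$ in $U$ would arise as a limit of zeros of $\Phi_{s_n}$ in $U$; since $y_{s_n}$ is the only such zero and tends to $\partial U$, $\Phi_{s^*}$ would have no zeros in $U$, contradicting (ii). Hence $y_{s_n}\to y^*\in U$, and an application of Rouch\'e's theorem in a small ball $B(y^*,\varepsilon)\subset U$ together with (i) forces $y^*$ to be a zero of $\Phi_{s^*}$ of multiplicity exactly $\lambda$, so $s^*\in T$.

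For openness, fix $s_0\in T$; by (i) and the multiplicity-$\lambda$ property, $y_{s_0}$ is the unique zero of $\Phi_{s_0}$ in $U$. By Rouch\'e's theorem, for $s$ in a real neighborhood of $s_0$ the mapping $\Phi_s$ admits exactly $\lambda$ zeros (with multiplicity) in a small ball $B(y_{s_0},\varepsilon)\subset U$, and (i) precludes any other zeros in $U$. It remains to verify that these zeros form a single multiplicity-$\lambda$ zero rather than bifurcating. Using a Weierstrass-type local representation (after complexifying the parameter $s$), the zero set in $B(y_{s_0},\varepsilon)$ is encoded by finitely many real-analytic functions of $s$ whose simultaneous vanishing characterizes the $\lambda$-fold coincidence of roots. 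These functions vanish identically on $[0,s_0]$ by the analytic extension already provided by membership in $T$, and by the identity theorem for real-analytic functions they vanish on an entire neighborhood of $s_0$, yielding the required continuation of $y_s$.

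The main obstacle is the openness step: ruling out that the multiplicity-$\lambda$ zero bifurcates as $s$ crosses $s_0$. The argument crucially exploits the real-analyticity of $\Phi_s$ in $s$, so that non-bifurcation, being a condition defined by the vanishing of real-analytic functions, propagates from an interval of positive length to a full neighborhood. The closedness step, by contrast, is a comparatively soft combination of Rouch\'e/Hurwitz with hypothesis (ii), which is exactly what prevents the extending zero from leaking out to $\partial U$.
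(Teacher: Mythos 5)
Your proof is correct, and it rests on the same three ingredients as the paper's: analyticity of the non-bifurcation condition (used to propagate the multiplicity-$\lambda$ zero locally), hypothesis (ii) to prevent the continued zero from escaping $U$, and hypothesis (i) to rule out extra zeros. The organization differs: the paper runs a single contradiction at $S=\inf\{s:y_s\notin U\}$, invoking (ii) to produce a second zero $y'\in U$ at time $S$, continuing it slightly backwards, and contradicting (i) by exhibiting $>\lambda$ zeros for $s$ just below $S$; you instead run an open--closed argument on $T\subseteq[0,1]$, using (ii) more directly in the closedness step (the unique zero cannot escape or $\Phi_{s^*}$ would have no zeros at all) and packaging the paper's one-line assertion that ``the non-bifurcating condition is analytic'' into the Weierstrass/identity-theorem discussion in your openness step. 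Both routes buy essentially the same thing; yours is a little more structured and makes the identity-theorem mechanism behind non-bifurcation explicit, while the paper's is more compact. One small point worth tightening in your write-up: the dichotomy ``$y_{s_n}$ escapes to $\partial U$ or converges to $y^*\in U$'' is not automatic for a sequence in an open set; it is cleanest to argue directly that (ii) gives a zero $z_0\in U$ of $\Phi_{s^*}$, Rouch\'e forces $\Phi_{s_n}$ to have a zero near $z_0$ for $n$ large, and by uniqueness (from (i)) that zero is $y_{s_n}$, so $y_{s_n}\to z_0$. This replaces the escape/compactness discussion with a two-line Rouch\'e argument and handles unbounded $U$ for free.
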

\begin{proof}[Proof of the Lemma]
  Indeed, the curve $y_s$ can be analytically extended in $s$ as long
  as it doesn't leave $U$, as the non-bifurcating condition is
  analytic. Suppose toward contradiction that $y_s$ leaves $U$, and
  let $S$ denote the infimum of the set $\{s:y_s\not\in U\}$. Then by
  (ii) the map $\Phi_S$ must have some other zero $y'\in U$, and since
  $U$ is open $y'$ may be continued to a zero $y'_s$ of $\Phi_s$ for
  $s$ close to $S$. But then for $s$ slightly smaller than $S$ we have
  a zero $y_s$ of multiplicity $\lambda$ (by (iii)) and a zero $y'_s$,
  contradicting (i).
\end{proof}

We require one more standard fact, whose proof we include for the
convenience of the reader.

\begin{Fact} \label{fact:variety-eqs}
  Let $V\subset\C^N$ be an affine variety of degree $D$.
  Then $V$ is set-theoretically cut out by polynomials of degree $D$.
\end{Fact}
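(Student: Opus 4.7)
The plan is to reduce the claim to showing that for every point $p \in \C^N \setminus V$ there exists a polynomial $f_p$ of degree at most $D$ vanishing identically on $V$ with $f_p(p) \neq 0$; the family $\{f_p\}_{p \notin V}$ then cuts out $V$ set-theoretically.

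To construct $f_p$ I would use a generic linear projection argument. Pass to the projective closure $\bar V \subset \P^N$, which has the same dimension $k:=\dim V$ and degree $D$. Choose a generic linear subspace $L \subset \P^N$ of dimension $N-k-2$ satisfying (i) $L \cap \bar V = \emptyset$, and (ii) the $(N-k-1)$-plane $\mathrm{span}(L, p)$ also misses $\bar V$. Both are nonempty Zariski-open conditions on the Grassmannian by elementary dimension counts: since $\dim \bar V + \dim L = k+(N-k-2)<N$ and similarly for $\mathrm{span}(L,p)$, a dimensional-transversality argument on the incidence variety $\{(L,q): q\in L\cap\bar V\}$ produces open orbits where both intersections are empty. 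The projection $\pi_L: \P^N \setminus L \to \P^{k+1}$ then maps $\bar V$ to a $k$-dimensional closed subvariety of $\P^{k+1}$, which is necessarily a hypersurface cut out by a single homogeneous polynomial $g$.

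The crux of the argument is that $\deg \pi_L(\bar V) = D$ for generic $L$. This follows from the projection formula
\begin{equation*}
  \deg \bar V \;=\; \deg\bigl(\pi_L|_{\bar V}\bigr)\cdot\deg \pi_L(\bar V),
\end{equation*}
together with the classical fact that $\pi_L|_{\bar V}$ is birational onto its image for $L$ in an appropriate Zariski-open subset of the Grassmannian. Setting $f_p := g \circ \pi_L$ and dehomogenizing, one obtains a polynomial of degree at most $D$ on $\C^N$; it vanishes on $V$ because $\pi_L(V) \subset \{g=0\}$, and $f_p(p) \neq 0$ because $\pi_L(p) \notin \pi_L(\bar V)$ by condition (ii).

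The main obstacle is precisely the assertion that a generic linear projection of a $k$-dimensional projective variety of degree $D$ from a center of codimension $k+2$ is birational onto its image, which is the one step that is not a pure dimension count. It is a standard fact of elementary projective algebraic geometry (proved, e.g., by noting that the locus of $L$ for which some line through $L$ meets $\bar V$ in two or more points is a proper subvariety of the Grassmannian, since $\bar V$ is irreducible of dimension $k$ and the secant variety calculation cuts out a closed condition of positive codimension). Modulo this input, the rest of the argument is routine.
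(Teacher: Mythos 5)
Your proof is correct and follows essentially the same approach as the paper: reduce to producing, for each $p\notin V$, a polynomial of degree at most $D$ vanishing on $V$ but not at $p$, and obtain it by pulling back the defining equation of a generic linear projection of $V$ to a hypersurface in a $(\dim V+1)$-dimensional space. The paper phrases this in affine coordinates and asserts $\deg\pi(V)=D$ without comment; you pass to $\P^N$ and flag the birationality of the generic projection as the nontrivial input. One small simplification worth noting: you do not actually need $\deg\pi_L(\bar V)=D$, only $\deg\pi_L(\bar V)\le D$, and the inequality $\deg\bar V\ge\deg\pi_L(\bar V)$ follows from the projection formula with no birationality hypothesis at all (since $\deg(\pi_L|_{\bar V})\ge1$). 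Leaning on the inequality makes the argument shorter and also covers the case of reducible $V$, where your birationality claim (and its secant-variety justification, which presumes irreducibility) would not literally apply. Also, the condition you describe for non-birationality is more accurately stated in terms of $(N-k-1)$-planes through $L$ meeting $\bar V$ in two or more points, since the fibers of $\pi_L$ are such planes rather than lines; this is minor and does not affect the conclusion.
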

\begin{proof}
  Let $x\in\C^N\setminus V$. We will find a polynomial of degree
  bounded by $D$ that does not vanish at $x$. If $V$ is a hypersurface
  then it is the zero locus of a polynomial of degree $D$ and the
  claim is obvious.  Otherwise choose a generic projection
  $\pi:\C^N\to\C^{\dim V+1}$, such that $\pi(x)\not\in\pi(V)$. Then
  $\pi(V)$ is a hypersurface of degree $D$, and the previous argument
  produces a polynomial of degree $D$ which vanishes on
  $\pi^{-1}\pi(V)$. Since $x$ is not contained in this set, the proof
  is completed.
\end{proof}

Finally we can present the construction of the function $E$.

\begin{Prop} \label{prop:E-construction}
  There exists a polynomial $E\in\C_{n+m}$ such that
  \begin{enumerate}
  \item For every good curve $\gamma$ we have
    \begin{equation}
      \ord_\gamma E \ge \ord_\gamma \dist(\cdot,\NI(P;R))
    \end{equation}
  \item $H$ does not vanish identically on any irreducible component
    of $\NI(P;R)$.
  \item The degree of $E$ is bounded by
    \begin{equation}\label{eq: deg d_E}
      d_E := d_\NI^{2(n+m)}+d_H
    \end{equation}
  \end{enumerate}
\end{Prop}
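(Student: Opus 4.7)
The plan is to apply Lemma~\ref{lem:H-construction} separately to every irreducible component of $\NI(P;R)$ and assemble the results via a Chinese-remainder-style construction built from separating polynomials, one per component. Let $\NC_1,\ldots,\NC_N$ denote the irreducible components of the algebraic variety $\NI(P;R)\subset\C^{n+m}$. Since this variety is cut out by equations of degree at most $d_\NI$ by Proposition~\ref{prop:ni-algebraic}, a Bezout-type bound (Heintz) yields $N\le d_\NI^{n+m}$ and $\deg\NC_j\le d_\NI^{n+m}$ for every $j$. For each $i$ Lemma~\ref{lem:H-construction} supplies $H_i\in\C_{n+m}$ of degree $\le d_H$, not identically vanishing on $\NC_i$, with $\ord_\gamma H_i\ge\ord_\gamma\dist(\cdot,\NC_i)$ on every good curve $\gamma$. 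For each ordered pair $i\ne j$, Fact~\ref{fact:variety-eqs} applied to $\NC_j$ at a chosen point of $\NC_i\setminus\NC_j$ (nonempty since distinct irreducible components cannot contain one another) produces a polynomial $F_{ji}\in\C_{n+m}$ of degree $\le\deg\NC_j\le d_\NI^{n+m}$ vanishing on $\NC_j$ but not at the chosen point. A generic linear combination $G_j:=\sum_{i\ne j}c_{ji}F_{ji}$ retains degree $\le d_\NI^{n+m}$, vanishes identically on $\NC_j$, and does not vanish identically on any $\NC_i$ with $i\ne j$.

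I would then define
\begin{equation}
  E := \sum_{i=1}^{N} H_i\prod_{j\ne i}G_j.
\end{equation}
Condition (2) is immediate: restricted to any $\NC_k$, every term with $i\ne k$ contains the factor $G_k\equiv 0$ on $\NC_k$ and vanishes, while the $k$-th term equals $(H_k\prod_{j\ne k}G_j)|_{\NC_k}$, a product of functions each not identically vanishing on the irreducible variety $\NC_k$ and hence itself nonzero. The degree bound follows from
\begin{equation*}
  \deg E\;\le\;\max_i\Bigl(\deg H_i+\sum_{j\ne i}\deg G_j\Bigr)\;\le\;d_H+N\cdot d_\NI^{n+m}\;\le\;d_H+d_\NI^{2(n+m)}\;=\;d_E.
\end{equation*}

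For condition (1), fix a good curve $\gamma$ and let $i_0$ attain $\max_i\ord_\gamma\dist(\cdot,\NC_i)$. This maximum coincides with $\ord_\gamma\dist(\cdot,\NI(P;R))$, since $\dist(\cdot,\NI(P;R))=\min_i\dist(\cdot,\NC_i)$ and along $\gamma$ a higher vanishing order produces pointwise smaller values, so the minimum is dictated by the factor of highest order. The $i_0$-th summand has order at least $\ord_\gamma H_{i_0}\ge\ord_\gamma\dist(\cdot,\NC_{i_0})$ by Lemma~\ref{lem:H-construction}. For every other summand ($i\ne i_0$) the product contains the factor $G_{i_0}$, a polynomial vanishing on $\NC_{i_0}$ and therefore Lipschitz in a neighborhood of $p$: comparing $q$ to its nearest point on $\NC_{i_0}$ yields $|G_{i_0}(q)|\le L\cdot\dist(q,\NC_{i_0})$ and hence $\ord_\gamma G_{i_0}\ge\ord_\gamma\dist(\cdot,\NC_{i_0})$. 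Every summand, and therefore $E$, thus has order at least $\ord_\gamma\dist(\cdot,\NI(P;R))$ on $\gamma$.

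The main obstacle is the combinatorial and degree bookkeeping: one must check that the Bezout-type bounds on $N$ and on each $\deg\NC_j$ compose correctly to give precisely the $d_\NI^{2(n+m)}$ appearing in $d_E$, and verify that the generic combination $G_j$ exists inside the linear space of degree-$d_\NI^{n+m}$ polynomials vanishing on $\NC_j$ by the standard genericity argument over a finite union of proper linear subspaces (one per $\NC_i$ with $i\ne j$). The order estimate itself then reduces to the one-line Lipschitz observation for each separating polynomial.
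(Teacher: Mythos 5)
Your proof is correct and takes essentially the same route as the paper: the same Bezout bounds $s\le d_\NI^{n+m}$ and $\deg\NC_i\le d_\NI^{n+m}$, the same formula $E=\sum_i H_i\prod_{j\ne i}(\cdot)$ with separating factors vanishing on $\NC_j$ but not on the other components, and the same order argument on good curves (identifying the component of maximal distance order, then using Lemma~\ref{lem:H-construction} for the $H_{i_0}$ summand and the Lipschitz bound for the others). The only difference is cosmetic: the paper simply asserts the existence of the separating polynomials $Q_i$ of degree $d_\NI^{n+m}$, while you spell out their construction via Fact~\ref{fact:variety-eqs} and a generic linear combination.
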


\begin{proof}
  Let $\NI(P;R)=\cup_{i=1,\ldots,s}\NC_i$ be the irreducible
  decomposition of $\NI(P;R)$.  By Proposition~\ref{prop:ni-algebraic}
  the set $\NI(P;R)$ can be defined by polynomial equations of degree
  $d_\NI$. Therefore, $s\le d_\NI^{n+m}$ and any irreducible
  component $\NC_i$ of this set has degree bounded by
  $d_\NI^{n+m}$. Choose a polynomial $Q_i$ of this degree which
  vanishes on $\NC_i$ and not on any $\NC_j$ for $j\neq i$. Also
  construct for each $\NC_i$ the polynomial $H_i$ provided by
  Lemma~\ref{lem:H-construction}.

  Let
  \begin{equation} \label{eq:e-def}
    E = \sum_{i=1}^s H_i \prod_{j\neq i}Q_j.
  \end{equation}
  Let $\gamma$ be a good curve, and suppose that
  $\dist(\cdot,\NI(P;R))\rest\gamma$ attains its minimum on the
  component $\NC_i$. Then
  \begin{equation}
    \ord_\gamma H_i,\ord_\gamma Q_i \ge \ord_\gamma \dist(\cdot,\NI(P;R))
  \end{equation}
  and since each summand in~\eqref{eq:e-def} is a product
  containing either $H_i$ or $Q_i$,
  \begin{equation}
    \ord_\gamma E \ge \ord_\gamma \dist(\cdot,\NI(P;R)).
  \end{equation}
  Moreover, for each component $\NC_i$, all summands other
  than the $i$-th vanish identically on $\NC_i$, whereas the
  $i$-th summand does not. Therefore $E$ does not vanish identically
  on any component $\NC_i$, and the proposition is proved.
\end{proof}

\bibliographystyle{plain}
\bibliography{refs}

\end{document}